\newcommand{\R}{\mathbb{R}}
\newcommand{\C}{\mathbb{C}}
\newcommand{\A}{\mathcal{A}}
\newcommand{\D}{\mathcal{D}}
\newcommand{\I}{\mathcal{I}}
\newcommand{\eps}{\varepsilon}
\newcommand{\ci}{\mathrm{i}}
\newcommand{\nablaR}{\nabla_{\hspace{-2pt}\mbox{\rm \tiny R}}}
\newcommand{\WR}{W_{\hspace{-2pt}\mbox{\rm \tiny R}}}
\def\dx{\,\text{d}x}
\definecolor{olive}{RGB}{114,175,30} 
\definecolor{orange}{RGB}{225,92,22} 
\definecolor{dark-green}{rgb}{0.0,0.4,0.0}
\definecolor{darkBlue}{rgb}{0.0,0.0,0.6}
\definecolor{darkRed}{rgb}{0.8,0.1,0.0}
\newcommand{\Hscapro}[2]{ \Re(  #1 , #2 )_{L^2(\D)} }
\newtheorem{theorem}{Theorem}[section]
\newtheorem{lemma}[theorem]{Lemma}
\newtheorem{conclusion}[theorem]{Conclusion}
\newtheorem{example}[theorem]{Example}
\newtheorem{proposition}[theorem]{Proposition}
\begin{document}
\title[$J$-Method for Gross-Pitaevskii Eigenvalue Problem]{The $J$-Method for the\\ Gross-Pitaevskii Eigenvalue Problem}
\author[]{R.~Altmann$^*$, P.~Henning$^{\dagger}$, D.~Peterseim$^*$}
\address{${}^{*}$ Department of Mathematics, University of Augsburg, Universit\"atsstr.~14, 86159 Augsburg, Germany}
\address{${}^{\dagger}$ Department of Mathematics, Ruhr-University Bochum, DE-44801 Bochum, Germany and Department of Mathematics, KTH Royal Institute of Technology, SE-100 44 Stockholm, Sweden}
\email{\{robert.altmann, daniel.peterseim\}@math.uni-augsburg.de, patrick.henning@rub.de}
\thanks{P.~Henning acknowledges funding by the Swedish Research Council (grant 2016-03339). The work of D.~Peterseim is part of a project that has received funding from the European Research Council (ERC) under the European Union's Horizon 2020 research and innovation programme (Grant agreement No.~865751).}
\date{\today}
\keywords{}
\maketitle
%
%=============================================================================
%=========  Abstract
%=============================================================================
\begin{abstract}
This paper studies the $J$-method of [\emph{E.~Jarlebring, S.~Kvaal, W.~Michiels. SIAM J.~Sci.~Comput.~36-4:A1978-A2001, 2014}] for nonlinear eigenvector problems in a general Hilbert space framework. This is the basis for variational discretization techniques and a mesh-independent numerical analysis. 
A simple modification of the method mimics an energy-decreasing discrete gradient flow. In the case of the Gross-Pitaevskii eigenvalue problem, we prove global convergence towards an eigenfunction for a damped version of the $J$-method. More importantly, when the iterations are sufficiently close to an eigenfunction, the damping can be switched off and we recover a local linear convergence rate previously known from the discrete setting. This quantitative convergence analysis is closely connected to the~$J$-method's unique feature of sensitivity with respect to spectral shifts. Contrary to classical gradient flows, this allows both the selective approximation of excited states as well as the amplification of convergence beyond linear rates in the spirit of the Rayleigh quotient iteration for linear eigenvalue problems. These advantageous convergence properties are demonstrated in a series of numerical experiments involving exponentially localized states under disorder potentials and vortex lattices in rotating traps.
\end{abstract}
%
%=============================================================================
%=========  Introduction
%=============================================================================
\section{Introduction}
This paper studies eigenvalues of nonlinear differential operators $\A$  on a {\it real} Hilbert space $V$ of the form
$$
\A (v) = A(v,v),
$$
where $A\colon V \times V \rightarrow V^*$ is a continuous, bounded mapping that is invariant under scaling of the first argument and real-linear in the second argument. A well-known example that can be cast in such a format is the Gross-Pitaevskii eigenvalue problem (GPEVP) in a bounded domain $\D\subset \R^d$ (with $d=2,3$). In the strong form the GPEVP reads
\begin{align}
\label{eqn:GPEVP:strong}
\left[-\Delta + W  - \Omega \mathcal{L}_z + \kappa\, |u^*|^2\right] u^* 
= \lambda^* u^*.
\end{align}
Here, $W \in L^{\infty}(\D,\R)$ denotes a non-negative and space-dependent potential, $\Omega \in \R$ is the angular velocity, $\mathcal{L}_z = - \ci \left( x \partial_y - y \partial_x \right)$ is the $z$-component of the angular momentum, and $\kappa\ge0$ regulates the nonlinearity of the problem. 
This problem is related, e.g., to the modeling of Bose-Einstein condensates~\cite{Bos24,Ein24,DGP99,PiS03}. 
In this context, a solution~$u^*$ represents a stationary quantum state of the condensate, $|u^{*}|^2$ is the corresponding density, and $\lambda^*$ the so-called chemical potential. 

The numerical solution of the GPEVP~\eqref{eqn:GPEVP:strong} has been studied extensively in recent years. Popular methods for solving the nonlinear eigenvalue problem are for example the {\it Self Consistent Field Iteration} (SCF), cf.~\cite{Can00,CaL00,CaL00B,DiC07,UJR18}, which requires to solve a linearized eigenvalue problem in each iteration. Algorithms that belong to the SCF class are for instance the {\it Roothaan algorithm}~\cite{Roothaan51} or the {\it optimal damping algorithm} proposed in~\cite{CaL00}. Another important class of iterative methods is based on gradient flows for the energy functional associated with \eqref{eqn:GPEVP:strong}, where we mention the {\it Discrete Normalized Gradient Flow} (DNGF), cf.~\cite{BaD04,BCL06,BaS08,Bao-et-al-2005}, which is based on an implicit Euler discretization of the $L^2$-gradient flow. Improvements of the approach by using conjugated gradients were proposed in \cite{ALT17}. Here we also mention the {\it Projected Sobolev Gradient Flows} (PSGFs), cf.~\cite{DaK10,GaP01,HSW19,HenP18ppt,KaE10,RSB14,RSS09,Zhang2019}, which form a subclass of the gradient flow methods. Sobolev gradients are the Riesz representants of the Fr\'echet derivative of the energy functional in a suitable Hilbert space. With this, PSGFs are based on computing such Sobolev gradients and then projecting them onto the tangent space of the normalization constraint. An improvement of PSGF by using Riemannian conjugate gradients was suggested in~\cite{DaP17}. Another strategy to solve the GPEVP involves a direct minimization of the energy functional, cf.~\cite{BaT03,COR09}, which means that \eqref{eqn:GPEVP:strong} is written as a nonlinear saddle point problem which is then solved by a Newton-type method. Global convergence results for solving the GPEVP are very rare in the literature. To the best of our knowledge, global convergence had been so far only established for a damped PSGF suggested in \cite{HenP18ppt}, where corresponding analytical results can be found in \cite{HenP18ppt,Zhang2019}.

% A-method
The SCF and PSGF approaches are typically based on the simple linearization of the partial differential operator by replacing the density $|u^{*}|^2$ in \eqref{eqn:GPEVP:strong} by the density of some given approximation of $u^*$. 
In the finite-dimensional case (after spatial discretization) these approaches can be interpreted as generalizations of the inverse iteration (inverse power method). Linear convergence is observed empirically which is in agreement with the convergence theory for linear matrix eigenvalue problems. However, in the presence of clustered eigenvalues, which are typically connected to interesting physical phenomena, linear convergence can be very slow and shifting (as in the shifted inverse iteration or the Rayleigh quotient iteration) is a well-established technique for the acceleration of matrix eigenvalue solvers. The problem is that the aforementioned schemes seem to be unable to achieve any speed-up by a sophisticated shift of the nonlinear operator. In~\cite{JarKM14}, Jarlebring, Kvaal, and Michiels observed that this insensitivity towards spectral shifts is the result of an unsuitable choice of linearization. The authors propose a natural but conceptually different linearization using the Jacobian of the nonlinear operator~$\A$. The resulting approach does not belong to any of the classes above and we will refer to it as the {\em $J$-method}. 

% J-method (local)
This paper generalizes the $J$-method and its numerical analysis to an abstract Hilbert space setting. Hence, the resulting iteration scheme is based on a variational formulation which is mesh-independent (cf.~\cite{AltF20} for a similar approach in electromagnetism). This then allows the application of any spatial discretization, including finite elements and spectral methods. 
As discovered in ~\cite{JarKM14}, the correct choice of the linearization in the $J$-method does not only lead to a competitive nonlinear eigenvalue solver, it also allows major theoretical progress such as a proof of local linear convergence based on a version of Ostrowski's theorem (see~Section~\ref{sect:local} for the corresponding result the Hilbert space setting). The obtained linear convergence rate, which is essentially~$|\lambda^* + \sigma|/|\mu + \sigma|$, depends on the spectral gap of the $\sigma$-shifted Jacobian, where~$\mu + \sigma$ is the second-smallest eigenvalue of the $\sigma$-shifted Jacobian around~$u^*$. Our results generalize the findings of~\cite{JarKM14} from the matrix case to the abstract setting. In the case of the GPEVP, which is discussed in~Section~\ref{sect:localGPE}, this marks the first quantified convergence result. Moreover, an adaptive choice of the shifts in the spirit of the Rayleigh quotient iteration amplifies convergence beyond the linear rate in representative numerical experiments, see Section~\ref{sect:numerics}. At the same time, the sensitivity to spectral shifts facilitates the computation of excited states. 

% J-method (global)
As usual for nonlinear problems, the quantitative convergence results are of local nature in the sense that they require a sufficiently accurate initial approximation. In practical computations this may be a rather challenging task. A simple damping strategy inspired by an energy-dissipative gradient flow \cite{HenP18ppt} resolves this problem. In the case of the GPEVP we prove global convergence of the method towards an eigenfunction with a guaranteed decrease of energy, cf.~Section~\ref{sect:GPE}. 

% Abschluss
Altogether, the combination of the $J$-method with damping for globalization and shifting for acceleration provides a powerful methodology for the simulation and analysis of nonlinear PDE eigenvector problems such as the GPEVP. 
%
%
%=============================================================================
%=========  J method
%=============================================================================
\section{Definition of the $J$-method in Hilbert spaces}
%
%=============================================================================
\subsection{Nonlinear eigenvector problem}
We consider a nonlinear differential operator 
$$
\A\colon V \rightarrow V^*
$$
that maps a {\it real} Hilbert space $V$ into its dual space $V^*$. In particular, $V$ is equipped with an $\R$-inner product and every $F\in V^*$ satisfies $\langle F, v\rangle_{V^*,V} \in \R$ for all~$v\in V$. 
Note that $V$ is still allowed to contain complex-valued functions. We assume that $\A$ has the form
$$
\A (v) = A(v,v),
$$
where $A\colon V \times V \rightarrow V^*$ is continuous, bounded, and (real-)linear in the second argument. Recall that $A(v,\,\cdot\,)$ being real-linear (cf.~\cite[Def.~3.50]{BCS17}) means that for all $v,w_1,w_2\in V$ and $\alpha \in \R$ we have
$$
A(v,w_1+w_2) = A(v,w_1)+A(v,w_2) \qquad \mbox{and} \qquad A(v,\alpha w_1) = \alpha A(v, w_1).
$$
Furthermore, we assume that $A(\,\cdot\, , \cdot\,)$ is sufficiently smooth on $V\setminus\{0\} \times V$ (in particular real-Fr\'echet differentiable in both arguments) and that it is real-scaling invariant in the first argument, i.e.,
$$
A(u,v) = A(\alpha u , v) 
$$
for all $u,v \in V$ and all $\alpha \in \R \setminus \{ 0\}$. 

% embedding I
For the weak formulation of the eigenvalue problem with a nonlinearity in the eigenfunction, 
we introduce another real Hilbert space $H$ (the pivot space) with $\R$-inner product
$$
(\,\cdot\,,\,\cdot\,)_H\colon H \times H \rightarrow \R.
$$
such that $V, H, V^*$ forms a Gelfand triple \cite[Ch.~23.4]{Zei90a}. The corresponding $H$-norm $\|\cdot\| := \|\cdot\|_H$ will be used for the normalization condition of the eigenfunction. The corresponding embedding $\I\colon V\to V^*$ is defined via 
\[
\langle \mathcal{I} u,\, \cdot\, \rangle_{V^*,V} = (u ,\, \cdot\, )_{H}
\]
for $u\in V$. For brevity, we shall write $\langle\, \cdot\,,\, \cdot \rangle$ for the canonical duality pairing in $V$.

% eigenvalue problem
The goal of this paper is to solve the corresponding PDE eigenvalue problem: find an eigenfunction $u^*\in V$ with $\|u^* \|=1$ and an eigenvalue $\lambda^{\ast}\in \R$ such that $\A(u^*) = \lambda^* \I u^*$. This is equivalent to the variational formulation 
\begin{align}
\label{eqn:generalEVP}
\langle \A(u^*), v \rangle 
= \langle \lambda^* \I u^*, v \rangle 
= \lambda^* (u^*, v)_H \quad\text{for all test functions } v\in V. 
\end{align}

Throughout the paper, we denote by $\overline{\alpha}$ the complex conjugate of a complex number $\alpha\in \C$, by $\Re(v)$ the real part of $v$, and by $\Im(v)$ the imaginary part.
%
% example: GPE
\begin{example}
\label{exp:GPEVP}
We are particularly interested in the GPEVP~\eqref{eqn:GPEVP:strong}. Here we consider $H=L^2(\D):=L^2(\D,\C)$ as a {\rm real} Hilbert space equipped with the inner product 
$$
(v,w)_{H}
:= \Re \left(\int_{\D}  v \hspace{2pt} \overline{w} \dx \right)
$$ 
and analogously $V=H^1_0(\D):=H^1_0(\D,\C)$ as a real Hilbert space equipped with
$$
(v,w)_{V} 
:= \Re \left(\int_{\D} \nabla v \cdot \overline{\nabla w} \dx \right).
$$ 
The (real) dual space is denoted by $V^*= H^{-1}(\D):=H^{-1}(\D,\C)$. The corresponding nonlinear operator reads
\begin{align}
\label{eqn:GPEVP:weak}
\langle A(u,v) , w \rangle 
:= \Re \left( \int_{\D} \nablaR v \cdot \overline{\nablaR w} + \WR \, v\, \overline{w} \dx  
+ \frac{\kappa}{\| u \|^2} \int_{\D} |u|^2 v\, \overline{w}\dx \right).
\end{align}
Here, we have $\WR(x)= W(x) - \tfrac{1}{4}\, \Omega^2\, |x|^2$ and the rotational gradient $\nablaR$ is given by
\begin{align*}
\nablaR v := \nabla v + \ci \frac{\Omega}{2} R^{\top} v
\end{align*}
for the divergence-free vector field $R(x,y,z):=(y,-x,0)$ if~$d=3$ and $R(x,y):=(y,-x)$~if $d=2$. Note that the nonlinear term is multiplied by $\| u\|^{-2}$ in order to achieve the assumed scaling invariance of $A$ in the first component. 
\end{example}
%
%=============================================================================
\subsection{Linearization and the $J$-operator}
Recall that the real-Fr\'echet derivative of some $F\colon X \rightarrow Y$ in a point $u \in X$ is a bounded and $\R$-linear operator $F^{\prime}(u)\colon X \rightarrow Y $ with the property 
\begin{align*}
\lim_{h\in X\setminus \{0\},\, h \rightarrow 0} \frac{ \| F(u+h) - F(u) - F^{\prime}(u)h \|_Y }{ \| h \|_{X} } = 0. 
\end{align*}
We write $F^{\prime}(u;h):=F^{\prime}(u)h$ for the Fr\'echet derivative in $u$ in direction of~$h$.
In the following, we speak about the Fr\'echet derivatives of operators, where we always mean the real-Fr\'echet derivative as stated above. 
Moreover, we neglect writing the supplement $h\in X\setminus \{0\}$ beneath the limit.
For the operator $A$, we denote the partial Fr\'echet derivative with respect to the (nonlinear) first component by $\partial_1 A\colon V^3 \rightarrow V^*$. Because of the scaling invariance of $A$, i.e., $A(u,\cdot\,)=A(\alpha u,\cdot\,)$, we have~$\partial_1  A(u,\cdot\,;u)=0$,
which can be seen by
$$
\lim_{t\rightarrow 0} \frac{\| A(u+tu,\cdot\,) - A(u,\cdot\,) - 0\|_{\mathcal{L}(V,V^{\ast}) } }{t\, \| u \|_{V}} 
= \lim_{t\rightarrow 0} \frac{\|A(u,\cdot\,) - A(u,\cdot\,)\|_{\mathcal{L}(V,V^{\ast}) } }{t\, \|u\|_V} 
= 0.
$$
The Fr\'echet derivative of $\A$ in $u\in V$ and in direction $w \in V$ can be expressed as 
\begin{align*}
\A^{\prime}(u; w) 
&=  \lim_{t\rightarrow 0} \frac{A(u+tw,u+tw) - A(u,u)}{t}\\
&=  \lim_{t\rightarrow 0} \frac{A(u+tw,u) - A(u,u)}{t} + \lim_{t\rightarrow 0} \frac{A(u+tw,u+tw) - A(u+tw,u)}{t}  \\
&=  \partial_1  A(u, u; w)  + A(u,w). 
\end{align*}
Due to $\partial_1  A(u,\cdot\,;u) = 0$, we conclude
\[
\A^{\prime}(u;u) 
= \partial_1  A(u,u;u)  + A(u,u) 
= A(u,u)
= \A(u).
\]
%
% J operator
For an element $u\in V$ we now define the operator $J(u)\colon V \rightarrow V^*$ by
\begin{align}
\label{abstract-definiton-J}
J(u) := \A^{\prime}(u;\, \cdot\,). 
\end{align}
Hence, using $\langle J(u^{\ast} ) u^{\ast} , v \rangle  = \langle \A^{\prime}(u^{\ast},u^{\ast}) , v \rangle = \langle \A(u^{\ast}), v \rangle$,
we observe that the eigenvalue problem~\eqref{eqn:generalEVP} can be rewritten as: find $u^{\ast} \in V$ with $\| u^{\ast} \|=1$ and $\lambda^{\ast}\in \R$ such that
\begin{align}
\label{eqn:generalEVP:J}
\langle J(u^{\ast} ) u^{\ast} , v \rangle 
= \lambda^{\ast} (u^{\ast},v)_{H}
\qquad \mbox{for all } v \in V.
\end{align}
%
%
%=============================================================================
\subsection{The shifted $J$-method}\label{ss:J-shifted}
The shifted $J$-method is defined by applying the inverse power iteration to the reformulated eigenvalue problem \eqref{eqn:generalEVP:J} based on the linearization~$J$ and some spectral shift~$\sigma$. For that, we consider a function $v\in V$ and~$\sigma\in\R$ such that $J(v) +\sigma \I\colon V\to V^*$ is invertible. 
In practice, one may either choose~$\sigma$ large such that~$J(v) +\sigma \I$ is coercive (see Section~\ref{sect:GPE:coercive} for the example of the GPEVP) or close to the negative of the target eigenvalue (as it is done in the numerical experiments of Section~\ref{sect:numerics}).   

For $F\in V^*$ we define $u = (J(v) +\sigma \I)^{-1}F \in V$ as the unique solution of the variational problem 
$$
\langle (J(v) +\sigma \mathcal{I}) u , w \rangle 
= \langle F , w \rangle 
$$
for all $w \in V$. Considering a fixed shift $\sigma$ such that~$-\sigma$ is no eigenvalue of $J(v)$, we define $\psi\colon V\to V$ by 
$$
\psi(v) 
:= (J(v) +\sigma \mathcal{I} )^{-1} \I v. 
$$
Including an additional normalization step finally leads to the operator $\phi\colon V\to V$, 
$$
\phi(v) 
:= \frac{\psi(v)}{\| \psi(v) \|}. 
$$
Now consider a normalized eigenpair $(u^*, \lambda^*)$ of the eigenvalue problem~\eqref{eqn:generalEVP}. Then, $\psi(u^*)$ satisfies due to $J(u^*)u^* = \A(u^*) = \lambda^* \I u^*$, 
\begin{align*}
(\lambda^*+\sigma) \psi(u^*)
&= (J(u^*) +\sigma \I)^{-1} J(u^*)\, u^* + \sigma (J(u^*) +\sigma \I)^{-1} \I u^* \\
&= (J(u^*) +\sigma \I)^{-1} (J(u^*) + \sigma\I)\, u^*
= u^*. 
\end{align*}
As a consequence, we observe that $u^{\ast}$ is a fixed point of $\phi$, i.e., $u^{\ast} = \phi(u^{\ast})$. This motivates the following iteration scheme: given $u^0\in V$ with $\|u^0\|=1$ and a shift $\sigma$ such that $J(u^0) +\sigma \I$ is invertible, compute iterates $u^{n+1}$ for $n\geq0$ by
\begin{align}
\label{eqn:Jmethod}
u^{n+1} 
= \phi(u^n)
= \frac{\psi(u^n)}{\| \psi(u^n) \|}
= \alpha_n\, (J(u^n) +\sigma \I )^{-1} \I u^n 
\end{align}
with the normalization factor $\alpha_n = 1 / \| (J(u^n) +\sigma \I )^{-1} \I u^n\|$. For the matrix case, the convergence of this scheme was analyzed in~\cite{JarKM14}. We emphasize that this scheme is well-defined if the shift~$\sigma$ guarantees the invertibility of $J(u^n) +\sigma \I$. This property has to be checked within the specific application and will be proved for the GPEVP in Section~\ref{sect:GPE:coercive}. 
%
%=============================================================================
\subsection{The damped $J$-method}\label{ss:J-damped}
In many applications, eigenvalue problems (with a nonlinearity in the eigenfunction) can be equivalently formulated as a constrained energy minimization problem. In this case, the radius of convergence can be considerably enhanced by introducing a variable damping parameter $\tau_n$.  This damping parameter (or step size) is adaptively selected such that the energy is optimally minimized in each iteration. In \cite{HenP18ppt}, a geometric justification of this approach was given, by interpreting it as the discretization of a certain projected Sobolev gradient flow, where the inner product (with which respect the Sobolev gradient is computed) is based on a repeated linearization of the differential operator. 
Even though $\langle (J(v) +\sigma \I)\, \cdot\, , \cdot\, \rangle $ typically does not define an inner product (and hence does not allow a geometric interpretation), it is still possible to generalize the $J$-method defined in~\eqref{eqn:Jmethod} in the spirit of the results in~\cite{HenP18ppt}. In Section~\ref{sect:GPE} below, we shall give a rigorous justification of this approach by proving global convergence of the damped $J$-method in the context of the GPEVP.

The damping strategy aims to find a (optimized) linear combination of $u^n$ and $(J(u^n) +\sigma \I)^{-1}\I u^n$. For brevity we introduce $J_\sigma(u) := J(u)+\sigma \I$ and assume for a moment that the shift $\sigma$ is chosen such that $J_\sigma(u^n)$ remains coercive.  

One step of the damped $J$-method then reads as follows: given $u^n\in V$ with $\|u^n\|=1$ and a step size $\tau_n>0$ compute 
\begin{align}
\label{eqn:JmethodDamped}
\tilde{u}^{n+1} 
= (1-\tau_n)u^n + \tau_n\, \gamma_n\, J_\sigma(u^n)^{-1} \I u^n, \qquad 
u^{n+1} = \frac{\tilde{u}^{n+1}}{\| \tilde{u}^{n+1} \|}
\end{align} 
with $\gamma_n^{-1} := (J_\sigma(u^n)^{-1} \I u^n , u^n)_H$. This choice provides the important $H$-orthogonality  
\begin{align}
\label{eqn:L2-orthogonality} 
( \tilde{u}^{n+1} - u^n, u^n)_H = 0.
\end{align} 
Due to the normalization in~\eqref{eqn:JmethodDamped}, we recover the original $J$-method~\eqref{eqn:Jmethod} for $\tau_n=1$. 
% mass > 1
Further, the $H$-orthogonality~\eqref{eqn:L2-orthogonality} implies that the mass (i.e., the~$H$-norm) of $\tilde{u}^{n+1}$ is greater or equal to~$1$ (even strictly larger unless ${u}^{n+1}=u^{n}$). This can be seen from 
\begin{align}
\label{intermediate-mass-growth}
1 = \| u^{n} \|^2 = ( u^{n} , \tilde{u}^{n+1} )_H \le \| \tilde{u}^{n+1}\|.
\end{align}
For the example of the GPEVP we will show in Section~\ref{sect:GPE} that the iteration~\eqref{eqn:JmethodDamped} is well-defined and that it converges to an eigenstate if the step size $\tau_n$ is sufficiently small. 

%% method summary
To summarize, we have introduced a damped version of the $J$-method~\eqref{eqn:JmethodDamped} (with typical choice $\sigma=0$ or $\sigma$ large) which intends to ensure global convergence and a shifted version~\eqref{eqn:Jmethod} to accelerate the convergence (with typical choice $\sigma$ close to $-\lambda^{\ast}$). 
Note, however, that we do not intend to use damping and shifting simultaneously as this seems hard to control numerically. 
Instead, we propose to apply damping for globalization of convergence and then switch to shifting if a certain accuracy is obtained. This practice then provides a powerful methodology as illustrated in Section~\ref{sect:numerics}.
%
%
%=============================================================================
%=========  Local convergence
%=============================================================================
\section{Abstract local convergence of the shifted $J$-method}\label{sect:local}
The sensitivity of the~$J$-method to spectral shifts facilitates the numerical approximation of excited states. To see this, we transfer the local convergence result presented in~\cite{JarKM14} to the Hilbert space setting. This shows that the shifted $J$-method converges to an eigenfunction $u^{\ast}$ of $\mathcal{A}$ if the starting function and the shift are sufficiently close to the eigenpair $(u^{\ast},\lambda^{\ast}$) of interest.
In this section, we consider the undamped version of the $J$-method, i.e., we consider the iteration~$u^{n+1} = \phi(u^n)$, including an arbitrary shift $\sigma \in \R$ and assuming that $J_\sigma(u^n)$ is invertible.
In order to prove local convergence with a linear rate that depends on the shift, we make use of the following well-known lemma that is a version of the Ostrowski theorem in Banach spaces.
\begin{proposition}
\label{prop-ostrowski}
Let $\phi\colon V \rightarrow V$ be a mapping on the Banach space $V$ that is Fr\'echet-differentiable at a point $u^{\ast} \in V$ such that the Fr\'echet-derivative $\phi^{\prime}(u^{\ast})\colon V \rightarrow V$ is a bounded linear operator with spectral radius 
$
\rho^{\ast}:= \rho( \hspace{2pt} \phi^{\prime}(u^{\ast}) \hspace{2pt}) < 1.
$
Then there is an open neighborhood $S$ of $u^{\ast}$, such that for all starting values $u^{0} \in S$ we have that the fixed point iterations
$$
u^{n+1} := \phi( u^n )
$$ 
converge strongly in $V$ to $u^{\ast}$, i.e., $\| u^n - u^{\ast} \|_V \rightarrow 0$ for $n\rightarrow \infty$. Furthermore, for every $\eps>0$ there exist a neighborhood $S_{\eps}$ of $u^{\ast}$ and a (finite) constant $C_\eps>0$ such that
$$
\|  u^n - u^{\ast} \|_{V} \le C_\eps \hspace{2pt} |\rho^{\ast} + \eps|^n \hspace{2pt} \| u^0 - u^{\ast} \|_{V} \qquad \mbox{for all } u^0 \in S_{\eps}
$$
and $n\ge 1$.
Hence, $\rho^{\ast}$ defines the asymptotic linear convergence rate of the fixed point iteration. 
\end{proposition}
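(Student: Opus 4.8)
The plan is to reduce the statement to a contraction argument carried out in a cleverly chosen norm. The obstruction to working directly with the given norm of $V$ is that the hypothesis controls only the spectral radius $\rho^*$ of $T := \phi^{\prime}(u^*)$, not its operator norm, which may be much larger. The first step is therefore to manufacture, for each fixed $\eps > 0$ with $\rho^* + \eps < 1$, an equivalent norm $\Vvert\cdot\Vvert$ on $V$ under which $T$ becomes a genuine contraction. Picking $r$ with $\rho^* < r < \rho^* + \eps$ and invoking Gelfand's spectral radius formula $\rho^* = \lim_{k} \| T^k \|^{1/k}$, the series $\Vvert x \Vvert := \sum_{k=0}^\infty r^{-k}\| T^k x \|_V$ converges (its terms decay geometrically since $\| T^k \|_V^{1/k} \to \rho^* < r$), defines a norm equivalent to $\|\cdot\|_V$, and satisfies the one-step bound $\Vvert T x \Vvert \le r\, \Vvert x \Vvert$ via the telescoping identity $\Vvert Tx\Vvert = r(\Vvert x\Vvert - \|x\|_V)$.

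Next I would linearize. Since $u^*$ is a fixed point, Fr\'echet differentiability at $u^*$ gives $\phi(u^* + h) - u^* = T h + R(h)$ with $\|R(h)\|_V = o(\|h\|_V)$ as $h \to 0$; by norm equivalence the same estimate holds in $\Vvert\cdot\Vvert$. Writing $e^n := u^n - u^*$, the iteration becomes $e^{n+1} = T e^n + R(e^n)$. Fix $\delta > 0$ with $r + \delta \le \rho^* + \eps$. The pointwise $o$-property supplies a radius $\eta$ such that $\Vvert R(h) \Vvert \le \delta \, \Vvert h \Vvert$ whenever $\Vvert h \Vvert \le \eta$; hence on the ball $B := \{\, \Vvert h \Vvert \le \eta \,\}$ one obtains
\[
\Vvert e^{n+1} \Vvert \le \Vvert T e^n \Vvert + \Vvert R(e^n) \Vvert \le (r + \delta)\, \Vvert e^n \Vvert \le (\rho^* + \eps)\,\Vvert e^n\Vvert .
\]

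This single inequality closes the argument. Since $\rho^* + \eps < 1$, the ball $B$ is forward invariant, so once $u^0 \in S_\eps := u^* + B$ the whole orbit stays in $B$ and the bound iterates to $\Vvert e^n \Vvert \le (\rho^* + \eps)^n \, \Vvert e^0 \Vvert$, giving geometric decay and in particular strong convergence $e^n \to 0$. Translating back through the equivalence constants $c_1 \Vvert\cdot\Vvert \le \|\cdot\|_V \le c_2 \Vvert\cdot\Vvert$ yields the claimed estimate with $C_\eps := c_2/c_1$. For the first assertion (convergence from some neighborhood $S$, with no prescribed rate) one simply fixes any admissible $\eps$ and sets $S := S_\eps$.

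I expect the construction of the adapted norm and the verification of its equivalence to $\|\cdot\|_V$ to be the main obstacle rather than any later step: once $T$ is a strict contraction in a suitable norm, absorbing the Fr\'echet remainder is routine. A secondary subtlety worth flagging is that differentiability is assumed only at the single point $u^*$, so the bound $\Vvert R(h)\Vvert \le \delta \, \Vvert h\Vvert$ is available solely through the pointwise definition of the derivative, with no uniform smoothness on a neighborhood; but this is exactly enough, since the contraction is anchored at $u^*$ and we only ever evaluate $R$ at the errors $e^n$, which shrink to zero.
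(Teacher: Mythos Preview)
Your proposal is correct and follows essentially the same approach as the paper's proof: construct an equivalent norm on $V$ in which $\phi'(u^*)$ has operator norm at most $\rho^* + \eps$, then use Fr\'echet differentiability at $u^*$ to absorb the remainder into a strict contraction. The paper merely cites \cite[Lem.~4.3.7]{AMR88} for the existence of such a norm and refers to \cite{Shi81,Fin96} for the remaining argument, whereas you carry out both steps explicitly---in particular giving the concrete construction $\Vvert x\Vvert = \sum_{k\ge 0} r^{-k}\|T^k x\|_V$ and verifying the telescoping bound $\Vvert Tx\Vvert \le r\,\Vvert x\Vvert$.
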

\begin{proof}
The proof is based on the observation that under the assumptions of the lemma and for each $\eps>0$, there exists an induced operator norm $\| \cdot \|_{\eps}$ such that $\| \phi^{\prime}(u^{\ast}) \|_{\eps} \le \rho^{\ast} + \eps$, cf.~\cite[Lem.~4.3.7]{AMR88}. With this result at hand, we can exploit the norm equivalence together with the definition of Fr\'echet derivatives to conclude the desired local convergence rate. This simple argument is elaborated e.g.~in~\cite{Shi81,Fin96} and generalizes the previous findings obtained in~\cite{Kit66}. 
\end{proof}
In the finite dimensional case and under some additional assumptions on the structure of $\phi^{\prime}(u^{\ast})$ it can be shown that $\|  u^n - u^{\ast} \|_{V} \le C \hspace{2pt} |\rho^{\ast}|^n \| u^0 - u^{\ast} \|_{V}$, cf.~\cite[Th.~10.1.3 and 10.1.4; NR 10.1-5]{OrR70}.

In the spirit of Proposition~\ref{prop-ostrowski} we need to study the spectrum of $\phi^{\prime}(u^{\ast})$ to conclude local convergence.
%=============================================================================
\subsection{Derivative of $\phi$}
Since we interpret $\psi$ and $\phi$ as operators from $V$ to $V$, we have likewise for the first Fr\'echet derivative in $v\in V \setminus \{0\}$,
$
\psi'(v\,;\, \cdot\,),\ \phi^{\prime}(v\,;\, \cdot\,) 
\colon V \to V. 
$
In order to apply Proposition~\ref{prop-ostrowski}, we need to compute the Fr\'echet derivative of $\phi$ in $u^{\ast}$. As a first step, we compute the derivative of the mapping $v \mapsto \| \psi(v) \|$, leading to  
\begin{align*}
\langle \partial_v \| \psi(v) \| , w \rangle 
= \frac{(\psi^{\prime}(v;w) , \psi(v))_H}{\| \psi(v) \|}
= (\psi^{\prime}(v;w) , \phi(v))_H
\end{align*}
or, more compactly, $\partial_v \| \psi(v) \| = (\psi^{\prime}(v), \phi(v))_H$. In order to compute $\phi^{\prime}(v)$, we write $\psi^{\prime}(v)$ in the form 
\begin{align*}
\psi^{\prime}(v) 
= \partial_v \left( \phi(v) \| \psi(v) \| \right)
= \| \psi(v)\|\, \phi^{\prime}(v) + (\psi^{\prime}(v) , \phi(v))_H \, \phi(v)
\end{align*}
and conclude that
\begin{align*}
\phi^{\prime}(v) 
= \frac{\psi^{\prime}(v)}{ \| \psi(v) \| } - \frac{1}{ \| \psi(v) \| }
(\psi^{\prime}(v) , \phi(v))_H\, \phi(v).
\end{align*}
For $v=u^{\ast}$ we can exploit $\phi(u^{\ast}) = u^{\ast}$, $\| u^{\ast} \|=1$, and $u^* = (\lambda^*+\sigma)\, \psi(u^{\ast})$. This implies $\| \psi(u^{\ast}) \| = |\lambda^{\ast} + \sigma|^{-1}$ and thus, 
\begin{align*}
\phi^{\prime}(u^{\ast}) 
= |\lambda^{\ast} + \sigma| \left( \psi^{\prime}(u^{\ast}) - (\psi^{\prime}(u^{\ast}) , u^{\ast})_H\hspace{2pt} u^{\ast} \right).
\end{align*}
To compute $\psi^{\prime}(u^{\ast})$, in turn, we use the identity 
\begin{align*}
\I 
= \partial_v (\mathcal{I} v)
= \partial_v \left( (J(v) +\sigma \mathcal{I}) \psi(v) \right)
= J^{\prime}(v \hspace{2pt}; \cdot )\, \psi(v) + ( J(v) + \sigma \I)\, \psi^{\prime}(v).
\end{align*}
Note that $J^{\prime}(v)\colon V\times V \rightarrow V^*$ denotes here the Fr\'echet derivative of $J$ in a fixed element $v\in V \setminus \{ 0 \}$. We conclude
\begin{align*}
\psi^{\prime}(u^{\ast}) 
= J_\sigma(u^{\ast})^{-1} \mathcal{I} - J_\sigma(u^{\ast})^{-1}  J^{\prime}(u^{\ast} ; \cdot)\, \psi(u^{\ast}) .
\end{align*}
Next, we want to verify that $J^{\prime}(u^{\ast} ;\, \cdot\,) \psi(u^{\ast})=0$. For that, we consider the definition of $J^{\prime}$ which yields 
\begin{align*}
J^{\prime}(u^{\ast}; w)\, u^{\ast}
&= \lim_{t\rightarrow 0} \frac{J(u^{\ast} + tw)\, u^{\ast} - J(u^{\ast})\, u^{\ast} }{t} \\
&= \lim_{t\rightarrow 0} \frac{J(u^{\ast} + tw)(u^{\ast} + tw) - J(u^{\ast} + tw)(tw) - J(u^{\ast} ) u^{\ast} }{t} \\
&= \lim_{t\rightarrow 0} \frac{\A(u^{\ast} + tw) - \A(u^{\ast}) - J(u^{\ast} + tw)(tw)}{t} \\
&= \lim_{t\rightarrow 0} \left( \A^{\prime}(u^{\ast}) w - J(u^{\ast} + tw)w \right)
\overset{\eqref{abstract-definiton-J}}{=} 0.
\end{align*}
Here we also used that in a neighborhood of $u^*$ (which excludes the zero element due to $\| u^{\ast} \|=1$) the operator~$J$ is continuous. In summary, we proved~$J^{\prime}(u^{\ast};w)\, u^{\ast} = 0$ for all $w\in V$. Since~$u^*$ equals $\psi(u^{\ast})$ up to a multiplicative constant, we conclude that $J^{\prime}(u^{\ast}; \cdot)\, \psi(u^{\ast})=0$ and hence, $\psi^{\prime}(u^{\ast}) = J_\sigma(u^{\ast})^{-1} \I$. Plugging this into previous estimates, we directly obtain 
\begin{align}
\label{eqn:phiPrime}
\phi^{\prime}(u^{\ast}) 
= |\lambda^{\ast} + \sigma| \left( J_\sigma(u^{\ast})^{-1} \I - (J_\sigma(u^{\ast})^{-1} \I, u^{\ast})_H\hspace{2pt} u^{\ast} \right).
\end{align}
%
%=============================================================================
\subsection{Local convergence results}
Since the $J$-method is of the from $u^{n+1} = \phi(u^n)$, we recall from Proposition~\ref{prop-ostrowski} that the local convergence rate is strongly connected to the  spectrum of~$\phi^{\prime}(u^{\ast})$. Besides the modified expression in \eqref{eqn:phiPrime}, the analysis of this spectrum also requires the following orthogonality result.
\begin{lemma}
\label{lem:leftrigthOrth}	
Let $(u,\lambda) \in V\times \R$ be an eigenpair~of $J(u^*)$. Further, let $(w,\mu) \in V\times \R$ be an adjoint-eigenpair of $J(u^*)$ with $\mu\neq \lambda$, i.e., 
\[
\langle J(u^*)\, v, w\rangle
= \mu\, (v, w)_H
\]
for all $v\in V$. Then, it holds that $(u,w)_H=0$.
\end{lemma}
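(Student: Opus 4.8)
The plan is to exploit that the two defining relations can be paired against each other through a judicious choice of test functions, so that the same duality value is expressed in terms of the two different eigenvalues. The entire argument collapses to a one-line subtraction, and the only real care needed is the bookkeeping of which test function to insert where.

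First I would make explicit the meaning of the eigenpair relation. In analogy with the reformulated eigenvalue problem~\eqref{eqn:generalEVP:J}, the statement that $(u,\lambda)$ is an eigenpair of $J(u^*)$ is understood as
\[
\langle J(u^*)\, u, v\rangle = \lambda\, (u, v)_H \qquad \text{for all } v\in V,
\]
equivalently $J(u^*)\,u = \lambda\,\I u$ via the embedding $\langle \I u, v\rangle = (u,v)_H$. Since this identity holds for \emph{every} test function $v\in V$, I would specialize to the admissible choice $v=w$, obtaining
\[
\langle J(u^*)\, u, w\rangle = \lambda\, (u, w)_H.
\]

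Next I would use the adjoint-eigenpair relation, which by hypothesis reads $\langle J(u^*)\, v, w\rangle = \mu\, (v, w)_H$ for all $v\in V$. Inserting the admissible choice $v=u$ gives
\[
\langle J(u^*)\, u, w\rangle = \mu\, (u, w)_H.
\]
The left-hand sides of the two displayed identities are identical, so subtracting one from the other yields $0 = (\lambda-\mu)\,(u,w)_H$. Because $\mu\neq\lambda$ by assumption, the scalar factor $\lambda-\mu$ is nonzero and may be divided out, leaving $(u,w)_H=0$, which is exactly the claim.

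I do not anticipate any genuine obstacle: the sole point to check is that both substitutions---$v=w$ in the eigenpair relation and $v=u$ in the adjoint relation---are legitimate, and they are, since each relation is postulated to hold for all test functions in $V$. Conceptually, this is simply the Hilbert-space transcription of the classical biorthogonality between right and left eigenvectors of a matrix associated with distinct eigenvalues; no smoothness or invertibility property of $J$ is needed, only the linearity of the duality pairing and the inner product in the respective arguments.
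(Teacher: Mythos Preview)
Your proof is correct and follows essentially the same approach as the paper: both insert $v=w$ into the eigenpair relation and $v=u$ into the adjoint-eigenpair relation to obtain $\lambda\,(u,w)_H = \langle J(u^*)\,u,w\rangle = \mu\,(u,w)_H$, then conclude from $\lambda\neq\mu$. The paper merely compresses the two substitutions into a single chain of equalities, while you spell them out separately.
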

\begin{proof}
By definition we know that~$\langle J(u^*)\, u, v\rangle = \lambda\, (u,v)_H$ for all $v\in V$. Thus, with the test function $v=w$ we have 
\begin{align*}
\lambda \, (u, w)_H
=  \langle J(u^*)\, u, w\rangle
= \mu \, (u, w)_H. 
\end{align*}
The assumption ${\mu}\neq {\lambda}$ then directly implies the $H$-orthogonality of primal and adjoint eigenfunctions.
\end{proof}
%}
With this, we are prepared to study the spectrum of the linear operator~$\phi^{\prime}(u^*)\colon V\to V$ to make conclusions on the convergence rate using Proposition~\ref{prop-ostrowski}. The corresponding eigenvalue problem reads: 
find $z \in V \setminus \{0 \}$ and $\mu \in \R$ so that
$$
\langle \phi^{\prime}(u^{\ast}) z , v \rangle  = \mu\, (z, v)_H
$$
for all~$v \in V$. 
The subsequent lemma relates the spectra of~$\phi^{\prime}(u^*)$ and ~$J(u^{\ast})$. Recall that~$J(u^{\ast})$ is a linear, bounded operator over~$\R$. Thus, its (real) spectrum coincides with the (real) spectrum of the corresponding adjoint operator~\cite{HuR11}.
\begin{lemma}
\label{lem:local}
Assume that the (real) eigenvalues of~$J(u^{\ast})$ are countable and that there exists a basis of corresponding adjoint-eigenfunctions of $V$. Let the eigenvalue of interest~$\lambda^*$ be simple and $\sigma\in \R$ a shift such that $J_\sigma(u^{\ast})$ is invertible (note that in particular we have $\sigma\neq -\lambda^*$). 
Then, the spectra of~$J(u^{\ast})$ and~$\phi^{\prime}(u^{\ast})$ are connected in the following sense: 
$\mu \neq \lambda^*$ 
is an eigenvalue of~$J(u^{\ast})$ if and only 
if~$|\lambda^*+\sigma| / (\mu+\sigma)$ 
is an eigenvalue of~$\phi^{\prime}(u^{\ast})$ and the eigenvalue~$\lambda^*$ of $J(u^{\ast})$ corresponds to the eigenvalue $0$ of~$\phi^{\prime}(u^{\ast})$. 
\end{lemma}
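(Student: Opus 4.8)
The plan is to read off the spectrum of $\phi'(u^*)$ from that of the bounded operator $T:=J_\sigma(u^*)^{-1}\I\colon V\to V$, which encodes the spectrum of $J(u^*)$ explicitly. Writing $Pv:=(v,u^*)_H\,u^*$ for the $H$-orthogonal projection onto $\mathrm{span}\{u^*\}$ (using $\|u^*\|=1$), the identity~\eqref{eqn:phiPrime} becomes
\begin{align*}
\phi'(u^*)=|\lambda^*+\sigma|\,(\mathrm{Id}-P)\,T.
\end{align*}
Two elementary facts will drive the argument. First, $J(u^*)u^*=\lambda^*\I u^*$ yields $J_\sigma(u^*)u^*=(\lambda^*+\sigma)\I u^*$ and hence $Tu^*=(\lambda^*+\sigma)^{-1}u^*$. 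Second, any primal eigenpair $(u,\mu)$ of $J(u^*)$ satisfies $Tu=(\mu+\sigma)^{-1}u$, where $\mu+\sigma\neq0$ because $J_\sigma(u^*)$ is invertible.

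Next I would settle the value $0$ and the forward implication. Since $Tu^*=(\lambda^*+\sigma)^{-1}u^*$ and $(\mathrm{Id}-P)u^*=0$, one gets $\phi'(u^*)u^*=0$; thus $0$ is an eigenvalue of $\phi'(u^*)$ with eigenfunction $u^*$, which is precisely the statement that $\lambda^*$ corresponds to $0$. For an eigenpair $(u,\mu)$ with $\mu\neq\lambda^*$, I set $z:=(\mathrm{Id}-P)u$. As $u$ cannot be a multiple of $u^*$ (this would force $\mu=\lambda^*$), we have $z\neq0$ and $(z,u^*)_H=0$, and a direct computation using $Tu=(\mu+\sigma)^{-1}u$, $Tu^*=(\lambda^*+\sigma)^{-1}u^*$ and $(\mathrm{Id}-P)u^*=0$ gives $(\mathrm{Id}-P)Tz=(\mu+\sigma)^{-1}z$, so that $\phi'(u^*)z=\frac{|\lambda^*+\sigma|}{\mu+\sigma}\,z$.

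For the converse, let $\nu\neq0$ be an eigenvalue of $\phi'(u^*)$ with eigenfunction $z\neq0$. Since the range of $\phi'(u^*)=|\lambda^*+\sigma|(\mathrm{Id}-P)T$ lies in the range of $\mathrm{Id}-P$ and $\nu\neq0$, we must have $(z,u^*)_H=0$. Writing $(\mathrm{Id}-P)Tz=Tz-(Tz,u^*)_H u^*$, applying $J_\sigma(u^*)$ to $\phi'(u^*)z=\nu z$ and using $J_\sigma(u^*)T=\I$ together with $J_\sigma(u^*)u^*=(\lambda^*+\sigma)\I u^*$, I obtain an identity
\begin{align*}
(J(u^*)-\mu\I)\,z=-d\,\I u^*,\qquad \mu:=\tfrac{|\lambda^*+\sigma|}{\nu}-\sigma,
\end{align*}
for some scalar $d\in\R$; by construction $\nu=|\lambda^*+\sigma|/(\mu+\sigma)$. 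If $\mu$ were not an eigenvalue of $J(u^*)$, then $J(u^*)-\mu\I$ would be invertible, and since $u^*$ is its eigenfunction $(J(u^*)-\mu\I)^{-1}\I u^*=(\lambda^*-\mu)^{-1}u^*$, whence $z\in\mathrm{span}\{u^*\}$; together with $(z,u^*)_H=0$ this forces $z=0$. This contradiction shows that $\mu$ is an eigenvalue of $J(u^*)$.

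It then remains to exclude $\mu=\lambda^*$. Let $w^*$ denote the adjoint eigenfunction associated with the simple eigenvalue $\lambda^*$. If $\mu=\lambda^*$, pairing $(J(u^*)-\lambda^*\I)z=-d\,\I u^*$ with $w^*$ gives $0=-d\,(u^*,w^*)_H$; provided $(u^*,w^*)_H\neq0$ this yields $d=0$, hence $z\in\ker(J(u^*)-\lambda^*\I)=\mathrm{span}\{u^*\}$ and again $z=0$, a contradiction, so $\mu\neq\lambda^*$ and the equivalence is complete. The two steps where the hypotheses are genuinely used, and which I expect to be the main obstacle, are the justification that $J(u^*)$ has a pure point spectrum, so that $\mu$ being no eigenvalue makes $J(u^*)-\mu\I$ boundedly invertible, and the non-degeneracy $(u^*,w^*)_H\neq0$. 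Both should follow from the standing assumption that the countably many eigenvalues admit a basis of adjoint eigenfunctions: this renders $J(u^*)$ diagonalizable, so the simple eigenvalue $\lambda^*$ carries no Jordan block, since were $(u^*,w^*)_H=0$ then $\I u^*\in\mathrm{range}(J(u^*)-\lambda^*\I)$ would create a generalized eigenfunction and raise the algebraic multiplicity to at least two.
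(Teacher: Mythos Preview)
Your proof is correct (at the same level of rigor as the paper's) but proceeds along a genuinely different route. The paper works exclusively with \emph{adjoint} eigenfunctions: for the forward direction it shows that an adjoint eigenfunction $w$ of $J(u^{\ast})$ with eigenvalue $\mu\neq\lambda^{\ast}$ is an adjoint eigenfunction of $\phi'(u^{\ast})$ with eigenvalue $|\lambda^{\ast}+\sigma|/(\mu+\sigma)$ (using Lemma~\ref{lem:leftrigthOrth} to kill the $u^{\ast}$-term), and for the converse it tests a primal eigenpair $(z,\nu)$ of $\phi'(u^{\ast})$ against the adjoint basis $\{v_j\}$ to force $\nu=|\lambda^{\ast}+\sigma|/(\lambda_j+\sigma)$ for some $j$. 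You instead work with \emph{primal} eigenfunctions: for the forward direction you project $u$ to $z=(\mathrm{Id}-P)u$ and verify directly that $(\mathrm{Id}-P)Tz=(\mu+\sigma)^{-1}z$, which is arguably cleaner since it produces a primal eigenfunction of $\phi'(u^{\ast})$ without passing through the adjoint spectrum. For the converse you rewrite the eigenvalue equation as $(J(u^{\ast})-\mu\I)z=-d\,\I u^{\ast}$ and argue by contradiction via invertibility, whereas the paper expands against the adjoint basis. Your version makes explicit the two places where the structural hypotheses enter (pure point spectrum so that ``not an eigenvalue'' implies bounded invertibility, and the non-degeneracy $(u^{\ast},w^{\ast})_H\neq0$), while the paper leaves these implicit in the basis assumption; in both approaches these points are fully justified only once one passes to the compact-resolvent setting of Theorem~\ref{corollary-quantified-convergence}.
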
	
\begin{proof}
Due to the assumption on the shift, $J_\sigma(u^{\ast})$ is invertible and~$J_\sigma(u^{\ast})^{-1}\I$ has the eigenvalues $(\lambda_1+\sigma)^{-1}, (\lambda_2+\sigma)^{-1}, \dots$, including $(\lambda^*+\sigma)^{-1}$ with eigenfunction~$u^*$. Note that the eigenvalues are shifted but the primal- and adjoint-eigenfunctions are the same as for~$J(u^{\ast})$.
%% Hin-Richtung
We first consider the eigenvalue~$(\lambda^*+\sigma)^{-1}$ of 
$J_\sigma(u^{\ast})^{-1}\I$ 
with eigenfunction~$u^*$. Here we note that 
\[
\phi^{\prime}(u^*)\, u^*
\overset{\eqref{eqn:phiPrime}}{=} |\lambda^* + \sigma|
\left( J_\sigma(u^*)^{-1} \I u^* - (J_\sigma(u^*)^{-1} \I u^*, u^*)_H\, u^* \right)
= 0,
\]
i.e., $0$ is the corresponding eigenvalue of~$\phi^{\prime}(u^*)$. Now let $(\mu+\sigma)^{-1}$ be an eigenvalue of 
$J_\sigma(u^{\ast})^{-1}\I$ with $\mu\neq \lambda^*$. Since~$J_\sigma(u^{\ast})^{-1}\I$
is a linear, bounded operator over~$\R$, its (real) spectrum coincides with the (real) spectrum of the corresponding adjoint operator (cf.~\cite{HuR11}) such that there exists a corresponding adjoint-eigenfunction~$w$. By definition this means that 
\[
\big(J_\sigma(u^*)^{-1}\I v, w \big)_H = \frac{1}{\mu + \sigma}\, (v, w)_H
\] 
for all $v\in V$. Using that~$w$ is also an adjoint-eigenfunction of 
$J(u^{\ast})$
and the orthogonality of Lemma~\ref{lem:leftrigthOrth}, we find that 
\begin{align*}
(\phi^{\prime}(u^*)\, v, w )_H 
&= |\lambda^{\ast} + \sigma|\ \Big[ \big(J_\sigma(u^*)^{-1} \I v, w\big)_H - \big(J_\sigma(u^*)^{-1} \I v, u^* \big)_H\, (u^*,w)_H \Big] \\
&= \frac{|\lambda^{\ast} + \sigma|}{\mu + \sigma}\, (v, w)_H. 
\end{align*}
Thus, $w$ is an adjoint-eigenfunction of~$\phi^{\prime}(u^{\ast})$ to the eigenvalue~$|\lambda^*+\sigma| / (\mu + \sigma)$. 
	
%% Rück-Richtung
For the reverse direction let $(z,\mu)$ be an eigenpair of $\phi^{\prime}(u^*)$, i.e., for all~$v\in V$ we have 
\[
\mu\, ( z , v )_H 
= (\phi^{\prime}(u^*)\, z , v )_H
= |\lambda^{\ast} + \sigma|\ \Big[ (J_\sigma(u^*)^{-1} \I z , v)_H- (J_\sigma(u^*)^{-1} \I z, u^*)_H \, ( u^* , v)_H \Big]. 
\]
For $\mu=0$ this directly leads to $z=u^*$, since~$\mu=0$ yields the relation~$J_\sigma(u^*)^{-1} \I z = (J_\sigma(u^*)^{-1} \I z, u^*)_H\, u^*$. An application of~$J_\sigma(u^*)$ then implies that~$u^*$ and~$z$ coincide up to a multiplicative constant. Since both functions are normalized, we get $u^*=z$.
In the case $\mu\neq 0$ we employ the adjoint-eigenfunctions $v_1, v_2, \dots$ of~$J(u^{\ast})$ as test functions. This yields
\[
\tfrac{\mu}{|\lambda^{\ast} + \sigma|} \, (z, v_j)_H
= \big( J_\sigma(u^*)^{-1} \I z, v_j\big)_H - (J_\sigma(u^*)^{-1} \I z, u^*)_H\, (u^*,v_j)_H. 
\]
Note that the second term on the right-hand side vanishes due to Lemma~\ref{lem:leftrigthOrth}. The definition of being an adjoint-eigenfunction then leads to  
\[
\tfrac{\mu}{|\lambda^* + \sigma|} \, (z, v_j)_H
= \big( J_\sigma(u^*)^{-1} \I z, v_j\big)_H 
= \tfrac{1}{\lambda_j + \sigma} \, (z, v_j)_H
\]
for $j=1,2,\dots$ . Obviously, $(z, v_j)_H$ 
cannot vanish for all test functions. This implies that~$\mu = |\lambda^* + \sigma|/(\lambda_j + \sigma)$ for a certain index $j$. In other words, $\mu$ equals an eigenvalue of~$J_\sigma(u^*)^{-1} \I$ up to the factor~$|\lambda^* + \sigma|$.
\end{proof}
With the obtained knowledge about the spectrum of~$\phi^{\prime}(u^{\ast})$ we can directly apply Proposition~\ref{prop-ostrowski} to deduce an abstract local convergence result with a rate depending on the spectrum of~$J(u^*)$ relative to the implemented shift as in the matrix case. 
\begin{theorem}[abstract local convergence]
\label{theorem-local-convergence}
Consider the assumptions of Lemma~\ref{lem:local} and let $\sigma\in\R$ be a shift sufficiently close to~$-\lambda^*$. By $\mu$ we denote the (real) eigenvalue of~$J(u^*)$ which is closest to $-\sigma$ but different from~$\lambda^*$. If the shift is selected such that 
$$\rho^{\ast} := \frac{|\lambda^* + \sigma|}{|\mu + \sigma|} <1,$$
then the iterations of the $J$-method \eqref{eqn:Jmethod} are locally convergent to $u^{\ast}$ in the $V$-norm. Furthermore, for every $\eps>0$ there exists a neighborhood $S_{\eps}$ of $u^{\ast}$ and a constant $C_\eps>0$ such that
$$
\|  u^n - u^{\ast} \|_{V} \le C_\eps \hspace{2pt} |\rho^{\ast} + \eps|^n \hspace{2pt} \| u^0 - u^{\ast} \|_{V} \qquad \mbox{for all } u^0 \in S_{\eps}
$$
and $n\ge 1$.
\end{theorem}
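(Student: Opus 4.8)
The plan is to combine the spectral characterization of $\phi^{\prime}(u^{\ast})$ obtained in Lemma~\ref{lem:local} with the abstract Ostrowski-type result of Proposition~\ref{prop-ostrowski}. Since the $J$-method is the fixed-point iteration $u^{n+1}=\phi(u^n)$ and $u^{\ast}=\phi(u^{\ast})$, the entire statement follows once I verify that the spectral radius of $\phi^{\prime}(u^{\ast})$ equals the claimed quantity $\rho^{\ast}=|\lambda^{\ast}+\sigma|/|\mu+\sigma|$; the strong convergence and the quantitative rate with the factor $|\rho^{\ast}+\eps|^n$ on the neighborhood $S_\eps$ are then nothing but the conclusion of Proposition~\ref{prop-ostrowski}.

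First I would write $\lambda_1,\lambda_2,\dots$ for the countable (real) eigenvalues of $J(u^{\ast})$ and isolate $\lambda^{\ast}$ among them. Lemma~\ref{lem:local} then states that the point spectrum of $\phi^{\prime}(u^{\ast})$ consists precisely of the value $0$ (associated with the mode $u^{\ast}$) together with the numbers
$$
\frac{|\lambda^{\ast}+\sigma|}{\lambda_j+\sigma}, \qquad \lambda_j\neq\lambda^{\ast}.
$$
Taking absolute values gives
$$
\left| \frac{|\lambda^{\ast}+\sigma|}{\lambda_j+\sigma} \right|
= \frac{|\lambda^{\ast}+\sigma|}{|\lambda_j+\sigma|},
$$
so each nonzero spectral magnitude is a decreasing function of the distance $|\lambda_j+\sigma|$ from $\lambda_j$ to $-\sigma$. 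Because $J_\sigma(u^{\ast})$ is invertible, $-\sigma$ is not an eigenvalue of $J(u^{\ast})$, hence every denominator is strictly positive and, by discreteness, bounded away from zero; the minimal denominator is therefore attained exactly at the eigenvalue closest to $-\sigma$ that differs from $\lambda^{\ast}$, which is $\mu$. Consequently the supremum of the spectral magnitudes equals $|\lambda^{\ast}+\sigma|/|\mu+\sigma|$, while the $\lambda^{\ast}$-mode contributes only $0$ and is irrelevant.

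The step that requires care is the passage from the point spectrum to the spectral radius, since in general the spectrum of a bounded operator on an infinite-dimensional space may be strictly larger than its set of eigenvalues. Here the discrete-spectrum hypothesis of Lemma~\ref{lem:local} is exactly what is needed: the completeness of the adjoint-eigenfunction basis, together with the fact that the eigenvalues $\lambda_j$ accumulate only at infinity (so that the values $|\lambda^{\ast}+\sigma|/|\lambda_j+\sigma|$ accumulate only at $0$), forces the spectrum of $\phi^{\prime}(u^{\ast})$ to be the closure of its point spectrum, with the supremum above being an attained maximum. Thus $\rho(\phi^{\prime}(u^{\ast}))=\rho^{\ast}$. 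The hypothesis $\rho^{\ast}<1$ then makes Proposition~\ref{prop-ostrowski} directly applicable and yields both $\|u^n-u^{\ast}\|_V\to 0$ and the stated estimate. I would close by remarking that choosing $\sigma$ close to $-\lambda^{\ast}$ serves only to shrink the numerator $|\lambda^{\ast}+\sigma|$ while, by the simplicity and isolation of $\lambda^{\ast}$, keeping $|\mu+\sigma|$ bounded away from zero, so that $\rho^{\ast}$ can indeed be driven below $1$.
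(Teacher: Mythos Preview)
Your proposal is correct and follows essentially the same approach as the paper: identify the spectrum of $\phi'(u^{\ast})$ via Lemma~\ref{lem:local} and then invoke Proposition~\ref{prop-ostrowski}. You are in fact more careful than the paper about the passage from point spectrum to spectral radius; note, however, that your claim that the $\lambda_j$ accumulate only at infinity is not part of the stated hypotheses of Lemma~\ref{lem:local} (only countability and a basis of adjoint-eigenfunctions are assumed) --- this additional input is supplied in the concrete GPEVP setting via the compact-resolvent argument in Theorem~\ref{corollary-quantified-convergence}, but at the abstract level the paper simply takes it as implicit.
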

Below, we apply this abstract result to the GPEVP of Example~\ref{exp:GPEVP}. 
%
%
%=============================================================================
%=========  Local convergence - GPEVP
%=============================================================================
\section{Quantified local convergence of the shifted $J$-method for the GPEVP}\label{sect:localGPE}
As already mentioned in the introduction, we want to apply the damped $J$-method to the GPEVP~\eqref{eqn:GPEVP:strong}, cf.~Example~\ref{exp:GPEVP}. Thus, the task is to find an eigenfunction $u^* \in H^1_0(\D)$ with $\| u^*\|^2_{L^2(\D)} = 1$ and corresponding eigenvalue $\lambda^{\ast} \in \R$ such that
\begin{align*}
-\Delta u^* + Wu^* - \Omega \mathcal{L}_z u^* + \kappa\, |u^*|^2 u^* 
= \lambda^* u^*.
\end{align*}
Recall that the potential satisfies~$W \in L^{\infty}(\D,\R)$, whereas $\kappa\ge0$ regulates the nonlinearity. Furthermore, $\mathcal{L}_z$ %= - \ci \left( x \partial_y - y \partial_x \right)$ 
is the angular momentum operator with angular velocity $\Omega \in \R$. With these properties it is easily seen that the GPEVP can only have real eigenvalues. In the following we will also assume that  
\begin{align}
\label{assumption-W-Omega}
W(x) \ge \Omega^2 |x|^2.
\end{align}
This condition can be interpreted as that trapping frequencies are larger than the angular frequency. Physically speaking, this ensures that centrifugal forces do not become too strong compared to the strength of the trapping potential $W$. 
%
%
%=============================================================================
\subsection{The Gross-Pitaevskii energy}\label{sect:localGPE:energy}
We consider homogeneous Dirichlet boundary conditions and the short-hand notation  $L^2(\D):=L^2(\D,\C)$, $H^1_0(\D):=H^1_0(\D,\C)$, and $H^{-1}(\D):= H^{-1}(\D,\C)$ for the function spaces over $\R$, cf.~the details in Example~\ref{exp:GPEVP}. We set 
\[
V := H^1_0(\D), \qquad
H := L^2(\D), \qquad
V^* := H^{-1}(\D).
\]
This means that $\| \cdot \|$ equals the $L^2(\D)$-norm and $(\,\cdot\,,\,\cdot\,)_H := \Re(\,\cdot\,,\,\cdot\,)_{L^2(\D)}$, where $(u,v)_{L^2(\D)}:=\int_{\D} u \,\bar{v} \hspace{2pt}dx$.
Similarly, we equip $V$ with the inner product $(\,\cdot\,,\,\cdot\,)_V := \Re(\,\cdot\,,\,\cdot\,)_{H^1(\D)}$, where $(u,v)_{H^1(\D)}:=\int_{\D} \nabla u \cdot \overline{\nabla v} \hspace{2pt}dx$. 	
As before, the weak formulation is given by $\A(u^*) = A(u^*,u^*) = \lambda^*\I u^*$, where the nonlinear operator $A$ is defined in~\eqref{eqn:GPEVP:weak}. The eigenvalue problem is equivalent to finding the critical points (on the manifold associated with the $L^2$-normalization constraint) of the energy functional $E\colon V \rightarrow \R$ given by
\begin{align}
\nonumber
E(u)&:= \frac{1}{2} \int_{\D} |\nabla u|^2 + W |u|^2 - \Omega \hspace{2pt}\overline{u} \hspace{2pt} \mathcal{L}_z u + \frac{\kappa}{2} |u|^4 \dx \\
&\ = \frac{1}{2} \int_{\D} |\nablaR u|^2 + \WR |u|^2 + \frac{\kappa}{2} |u|^4 \dx, \label{eq:energy}
\end{align}
cf.~\cite{DaK10}. Recalling that $\WR(x)=W(x) - \tfrac{1}{4}\, \Omega^2\, |x|^2$ and considering assumption \eqref{assumption-W-Omega}, we see that the energy functional is bounded from below by a positive constant $c=c(\Omega,\D)$, i.e.,  
$$
E(u) \ge c > 0  \qquad \mbox{ for all } u\in V \mbox{ with } \| u \| = 1.
$$
Hence, $E$ is weakly lower semi-continuous and bounded from below, which yields the existence of a minimizer that is typically called a ground state. 

If $\Omega=0$, i.e., in the absence of a rotating potential, then the GPEVP has infinitely many eigenvalues $0<\lambda_1^{\ast}<\lambda_2^{\ast}\le\lambda_3^{\ast} \le \cdots < \infty$, where the ground state eigenvalue $\lambda_1^{\ast}$ is simple, cf.~\cite{CCM10,HenP18ppt}. If $\Omega\not=0$, the smallest eigenvalue is typically no longer simple \cite{Bao-et-al-2005}. This case refers to the physical phenomenon of a broken symmetry, where the ground state can have different shapes which differ in their number and location of vortices.
%
%
%=============================================================================
\subsection{The $J$-operator for the GPEVP}\label{sect:localGPE:Joperator}
In order to formulate the $J$-method for the GPEVP, we need to compute the linearization $J(u)$ according to \eqref{abstract-definiton-J}. Hence, by calculating the Fr\'echet derivative of $\mathcal{A}(u)=A(u,u)$ using \eqref{eqn:GPEVP:weak}, we obtain
\begin{eqnarray}\label{def-J-real}
\langle J(u) v , w  \rangle = \Re\langle \mathcal{J}(u) v , w  \rangle,
\end{eqnarray}
where 
\begin{multline}\label{def-J-full}
\langle \mathcal{J}(u) v , w  \rangle
:= \int_{\D} \nablaR v \cdot \overline{\nablaR w} + \WR v \overline{w} \dx\\
+ \frac{\kappa}{\| u \|^2} \int_{\D} \left( u \overline{v} + 2 v \overline{u} \right) u \overline{w} \dx - \frac{2\kappa\, 
\Hscapro{u}{v} }{\| u \|^4} \int_{\D} |u|^2 u \overline{w} \dx.
\end{multline}
Note that the operator~$J(u)$ induces an $\R$-bilinearform $ \langle J(u)\,\cdot,\, \cdot\,\rangle$, i.e., we still consider~$V$ as an~$\R$-vector space and have bilinearity only for multiplicative constants in~$\R$. 
Recall that we consider the real-Fr\'echet derivative, which also fits in the standard framework used in quantum mechanics. Moreover, the complex-Fr\'echet derivative does not exist for the present example.

We will show coercivity of $J(u)$ up to a shift in Lemma \ref{lem:Jcoercive} below. Before that, it is worth to mention that the eigenvalue problem can be equivalently expressed in terms of $ \mathcal{J}(u)$, which also yields the typical structure with standard inner products. We have the following result.
\begin{proposition}
Consider the GPEVP with full operator $\mathcal{J}$ given by \eqref{def-J-full} and its real part~$J$ given by \eqref{def-J-real}. Then $\lambda^{\ast} \in \R$ is an eigenvalue with $L^2$-normalized eigenfunction $u^{\ast}\in V$, i.e., 
$$
\langle J(u^{\ast}) , v \rangle 
= \lambda^{\ast}\, \Hscapro{ u^{\ast} }{ v }\qquad \mbox{for all } v\in V
$$
if and only if 
$$
\langle \mathcal{J}(u^{\ast}) , v \rangle = \lambda^{\ast} (u^{\ast}, v)_{L^2(\D)} \qquad \mbox{for all } v\in V.%H^1_0(\D)
$$
\end{proposition}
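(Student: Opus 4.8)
The plan is to obtain both implications directly from the defining relation \eqref{def-J-real}, namely $\langle J(u)v,w\rangle = \Re\langle \mathcal{J}(u)v,w\rangle$, together with the fact that $\lambda^*$ is real. I introduce $G(v) := \langle \mathcal{J}(u^*)u^*, v\rangle - \lambda^*(u^*,v)_{L^2(\D)}$, so that the $\mathcal{J}$-formulation is precisely the statement $G\equiv 0$ on $V$, while the $J$-formulation is, by \eqref{def-J-real} and the reality of $\lambda^*$, the statement $\Re G\equiv 0$ on $V$ (here I read the operator applications as $\langle J(u^*)u^*,v\rangle$ and $\langle\mathcal{J}(u^*)u^*,v\rangle$, in agreement with~\eqref{eqn:generalEVP:J}).

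The forward direction ($\mathcal{J}\Rightarrow J$) is then immediate: if $G(v)=0$ for all $v\in V$, then in particular $\Re G(v)=0$, and by \eqref{def-J-real} this is exactly $\langle J(u^*)u^*,v\rangle = \lambda^*\,\Hscapro{u^*}{v}$, i.e.\ the $J$-formulation.

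For the reverse direction ($J\Rightarrow \mathcal{J}$) the key observation is that $G$ is conjugate-linear in the test argument $v$. Indeed, inspecting \eqref{def-J-full} with the first two arguments frozen at $u^*$, the test function enters only through $\overline{\nablaR v}$ and $\bar{v}$, while the scalar coefficient $\Re(u^*,u^*)_{L^2(\D)}=\|u^*\|^2$ is real and hence inert; moreover the pairing $(u^*,v)_{L^2(\D)}=\int_\D u^*\,\bar{v}\dx$ is itself conjugate-linear in $v$. Given $\Re G\equiv 0$, I recover the imaginary part by testing against $\ci v$ instead of $v$: conjugate-linearity yields $G(\ci v)=\overline{\ci}\,G(v)=-\ci\,G(v)$, whence $0=\Re G(\ci v)=\Re\!\big(-\ci\,G(v)\big)=\Im G(v)$. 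Combined with $\Re G(v)=0$ this gives $G(v)=0$ for every $v\in V$, which is the $\mathcal{J}$-formulation.

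I expect the only point requiring any care to be the verification of conjugate-linearity of the test slot in \eqref{def-J-full}, in particular that the nonlinear terms and the coefficient $\Re(u^*,u^*)_{L^2(\D)}$ introduce no dependence on $\bar{v}$ beyond an overall real factor; once this is checked, the device of testing against $\ci v$ recovers the imaginary part and closes the equivalence with no further analysis.
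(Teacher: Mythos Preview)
Your proof is correct and follows essentially the same approach as the paper: both directions rely on taking real parts for $\mathcal{J}\Rightarrow J$ and on testing with $\ci v$ to recover the imaginary part for $J\Rightarrow\mathcal{J}$. Your framing via the auxiliary functional $G$ and the explicit verification of conjugate-linearity in the test slot is slightly more detailed than the paper's one-line argument, but the underlying idea is identical.
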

\begin{proof}
If $(\lambda^{\ast},u^{\ast}) \in \R \times V$ solves $\langle \mathcal{J}(u^{\ast}) , v \rangle = \lambda^{\ast} ( u^{\ast} , v)_{L^2(\D)}$, then we can take the real part on both sides and obtain $\langle J(u^{\ast}) , v \rangle = \lambda^{\ast} \Hscapro{ u^{\ast} }{ v }$. 
Vice versa, if $(\lambda^{\ast},u^{\ast}) \in \R \times V$ solves $\Re \langle  \mathcal{J}(u^{\ast}) , v \rangle =  \lambda^{\ast}\, \Re(u^{\ast} , v)_{L^2(\D)}$, then we can use test functions of the form $\ci v \in V$ to obtain $\Im \langle  \mathcal{J}(u^{\ast}) , v \rangle =  \lambda^{\ast}\, \Im(u^{\ast} , v)_{L^2(\D)}$. Multiplying the second equation with the complex number $\ci$ and adding it to the first equation readily yields
$\langle \mathcal{J}(u^{\ast}) , v \rangle = \lambda^{\ast} (u^{\ast}, v)_{L^2(\D)}$.
\end{proof}
The proposition shows that we can either interpret the eigenvalue problem with real parts only (i.e., with $J$) or with the full operator $\mathcal{J}$. With analogous arguments, we can also prove that $J(u^{\ast})^{-1} \I = \mathcal{J}(u^{\ast})^{-1} \I^\C$, where 
\begin{align}
\label{I-full}
\I^\C v:=(v ,\, \cdot\, )_{L^2(\D)}.
\end{align}
This justifies that we can interpret the iterations of the damped $J$-method \eqref{eqn:JmethodDamped} equivalently with~$J$ or~$\mathcal{J}$. In particular, we have the following conclusion. 
\begin{conclusion}
Consider the GPEVP with full operator $\mathcal{J}$ given by \eqref{def-J-full} and its real part $J$ defined in~\eqref{def-J-real}. Given $u^n\in V$ with $\|u^n\|=1$ and a step size $\tau_n>0$, we assume that $J_\sigma(u^n)^{-1}$ is invertible. Then the iterations of the $J$-method \eqref{eqn:JmethodDamped} can be equivalently characterized by the $\mathcal{J}$-iteration
\begin{align}
\label{eqn:JmethodDamped-new}
\tilde{u}^{n+1} 
= (1-\tau_n)u^n + \tau_n\, \gamma_n^\C\, \mathcal{J}_\sigma(u^n)^{-1} \I^\C \hspace{1pt} u^n, \qquad 
u^{n+1} = \frac{\tilde{u}^{n+1}}{\| \tilde{u}^{n+1} \|}
\end{align} 
with $\mathcal{J}_\sigma(u) := \mathcal{J}(u)+\sigma \I^\C$ and~$(\gamma_n^\C)^{-1} := (\mathcal{J}_\sigma(u^n)^{-1} \I^\C\hspace{1pt} u^n , u^n)_{L^2(\D)}$. The assumed existence of $J_\sigma(u^n)^{-1} \I$ implies the existence (and uniqueness) of $\mathcal{J}_\sigma(u^n)^{-1} \I^\C$.
\end{conclusion}
%
%
%=============================================================================
\subsection{Local convergence}\label{sect:localGPE:conv}
Finally, we apply Theorem \ref{theorem-local-convergence} to the GPEVP. 
\begin{theorem}[quantified convergence for the GPEVP]
\label{corollary-quantified-convergence}
Consider the GPEVP as described in Example \ref{exp:GPEVP} and let $u^n$ denote the iterates generated by the shifted $J$-method (without damping), i.e., 
\begin{align*}
u^{n+1}
= \phi(u^n)
= \frac{J_\sigma(u^n)^{-1} \I u^n}{\|  J_\sigma(u^n)^{-1} \I u^n \|}.
\end{align*}
By $u^{\ast}\in V=H^1_0(\D)$ we denote an $L^2$-normalized eigenfunction to \eqref{eqn:GPEVP:strong} with eigenvalue~$\lambda^{\ast}$. Assume that $\lambda^{\ast}$ is a simple eigenvalue of $J(u^{\ast})$ and that the shift $\sigma \not= - \lambda^{\ast}$ is such that $J_{\sigma}(u^{\ast})$ has a bounded inverse.  
Further, let the shift~$\sigma$ be sufficiently close to~$-\lambda^{\ast}$ and let $\mu$ be the eigenvalue of $J_{\sigma}(u^{\ast})$ so that $|\mu + \sigma|$ is minimal and 
$$
\frac{|\lambda^{\ast} + \sigma |}{| \mu + \sigma|} < 1.
$$
Then there exists a neighborhood $S \subset V$ of $u^{\ast}$ such that for all starting values $u^0 \in S$ we have
$$
\lim_{n \rightarrow \infty} \| u^n - u^{\ast} \|_{V} = 0. 
$$
Furthermore, for every $\eps>0$ there is a neighborhood $S_{\eps} \subset V$ of $u^{\ast}$ and a constant $C(\eps)>0$ such that for all $u^0 \in S_{\eps}$ (and $n\ge 1$) it holds
\begin{align*}
\| u^{n} - u^{\ast} \|_V\ 
\le\ C(\eps) \left( \frac{|\lambda^{\ast} + \sigma |}{| \mu + \sigma|} + \eps \right)^n \| u^{0} - u^{\ast} \|_V.
\end{align*}
Hence, if the shift $\sigma$ is close to~$-\lambda^{\ast}$ and $u^n$ close to~$u^*$, then we have locally a linear convergence with rate
$$
\frac{|\lambda^{\ast} + \sigma |}{| \mu + \sigma|} + \eps < 1.
$$
for any $\eps>0$.
\end{theorem}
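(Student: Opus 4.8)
The plan is to derive the statement as a direct consequence of the abstract Theorem~\ref{theorem-local-convergence} by verifying that the GPEVP linearization $J(u^*)$ meets all hypotheses of Lemma~\ref{lem:local}. The simplicity of $\lambda^*$, the invertibility of $J_\sigma(u^*)$, the closeness of $\sigma$ to $-\lambda^*$, and the gap condition $|\lambda^*+\sigma|/|\mu+\sigma|<1$ are already assumed in the statement, so the only substantive work is to confirm that the (real) eigenvalues of $J(u^*)$ are countable and that $V$ admits a basis of associated adjoint-eigenfunctions. Once this is established, Theorem~\ref{theorem-local-convergence} applies verbatim and yields both the strong convergence $\|u^n-u^*\|_V\to0$ and the rate $|\lambda^*+\sigma|/|\mu+\sigma|+\eps$, where $\mu$ is the eigenvalue of $J(u^*)$ realizing the minimal $|\mu+\sigma|$ among eigenvalues different from $\lambda^*$ (equivalently, the one closest to $-\sigma$); this is exactly the asserted conclusion.

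First I would analyze the structure of $J(u^*)$ from \eqref{def-J-real}--\eqref{def-J-full}. Its principal part $\int_\D \nablaR v\cdot\overline{\nablaR w}+\WR v\overline{w}\dx$ is a symmetric form on $V=H^1_0(\D)$, and assumption~\eqref{assumption-W-Omega} gives $\WR=W-\tfrac14\Omega^2|x|^2\ge\tfrac34\Omega^2|x|^2\ge0$, so that after the shift of Lemma~\ref{lem:Jcoercive} the operator $J_\sigma(u^*)$ is coercive. The remaining, nonlinearity-induced contributions are multiplications by $|u^*|^2$ and $u^{*2}$ together with the rank-one term built from $u^*$ and $|u^*|^2u^*$; using $u^*\in H^1_0(\D)$ and the Sobolev embeddings available for $d=2,3$ (so that $v\mapsto|u^*|^2 v$ maps $H^1_0$ boundedly into $L^2$) combined with the compact Rellich embedding $H^1_0(\D)\hookrightarrow\hookrightarrow L^2(\D)$, these contributions are relatively compact perturbations of the principal part. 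Consequently $J_\sigma(u^*)^{-1}\I$ factors through the compact embedding and is a compact operator, so its spectrum is countable and accumulates only at $0$; equivalently, the eigenvalues of $J(u^*)$ are countable with $+\infty$ as their only accumulation point.

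Next I would secure the adjoint-eigenfunction basis. The symmetric part of $J(u^*)$ is a self-adjoint operator with compact resolvent, hence possesses an orthonormal eigenbasis of $H$, while the only non-symmetric remainder is the single rank-one term $-\tfrac{2\kappa}{\|u^*\|^4}(u^*,\,\cdot\,)_H\,(|u^*|^2u^*,\,\cdot\,)_H$. I would argue that this finite-rank, relatively compact perturbation keeps the eigenvalues real (consistent with the paper's earlier observation that the GPEVP has only real eigenvalues) and still yields complete systems of eigenfunctions and adjoint-eigenfunctions forming a basis of $V$, so that all hypotheses of Lemma~\ref{lem:local} hold. It is convenient here to pass, via the Conclusion on $J(u^*)^{-1}\I=\mathcal{J}(u^*)^{-1}\I^\C$, to the full operator $\mathcal{J}(u^*)$, reducing the question to the linear spectral analysis of a compact operator and its $H$-adjoint.

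The main obstacle is precisely this last point: ensuring that the rank-one, non-self-adjoint correction does not destroy diagonalizability, i.e., that a complete biorthogonal system of eigen- and adjoint-eigenfunctions exists and that no complex eigenvalues are created. The symmetric principal part alone supplies a genuine orthonormal basis, but completeness of the perturbed eigensystem must be established by perturbation theory for operators with compact resolvent rather than taken for granted. Once the eigenstructure is confirmed, the remainder of the argument is a direct citation of Theorem~\ref{theorem-local-convergence}, with the identification of $\mu$ as the eigenvalue minimizing $|\mu+\sigma|$ translating the abstract rate into the stated one.
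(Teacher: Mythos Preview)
Your approach is essentially the same as the paper's: verify that $J_\sigma(u^*)^{-1}\I$ is compact (hence the spectrum of $J(u^*)$ is countable without finite accumulation points), note that the eigenvalues are real by the structure of the problem, and then invoke Theorem~\ref{theorem-local-convergence}. The paper's proof is in fact considerably shorter and simply asserts compactness of the resolvent without the detailed decomposition you give; your flagging of the adjoint-eigenfunction basis as a subtle point is legitimate, but the paper treats this only implicitly via the compact-resolvent remark and does not spell out a separate diagonalizability argument either.
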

\begin{proof}
By definition of the problem (cf.~Example~\ref{exp:GPEVP}) all eigenvalues are real.
Furthermore, the spectrum is countable and does not have an accumulation point in $\C$. This is a direct consequence from the observation that $J_\sigma(u^{\ast})^{-1} \I\colon L^2(\D) \rightarrow L^2(\D)$ is a compact operator for all shifts $-\sigma$ that are not in the spectrum of $J(u^{\ast})$ (i.e., we have compact resolvents). Hence, we can apply Theorem \ref{theorem-local-convergence} which readily proves the result.
\end{proof}
In the finite dimensional case based on a finite difference discretization, a corresponding result was presented in \cite{JarKM14}.
Motivated by the above convergence result, a practical realization of the iterations can be based on the more natural formulation of the $J$-method given by \eqref{eqn:JmethodDamped-new}. This is also the version for which we discuss the implementation in Appendix~\ref{app:Jmatrix}. 
%
%  
%=============================================================================
%=========  Damped J method for GPE
%=============================================================================  
\section{Global convergence of the damped $J$-method for the GPEVP}\label{sect:GPE}
In this section we come back to the question of invertibility of the operator~$J$ (which hence also implies invertibility of $\mathcal{J}$). This will then lead to a globally convergent method. 
%
%=============================================================================
\subsection{Coercivity of the shifted $J$-operator}\label{sect:GPE:coercive}
We first show that the operator $J$ is -- up to a shift -- coercive. 
\begin{lemma}
\label{lem:Jcoercive}
Given $u\in V$ and assumption~\eqref{assumption-W-Omega}, the 
%real part of the 
operator~$J(u)$ corresponding to the Gross-Pitaevskii operator satisfies a G\aa{}rding inequality. More precisely, for any~$\sigma \ge \frac{\kappa}{3} \| u \|_{L^4(\D)}^4 / \| u \|^4$ the bilinear form 
$$%\Re 
\langle (J(u)+\sigma\I)\hspace{3pt} \cdot \hspace{2pt} , \hspace{2pt} \cdot \hspace{2pt} \rangle\colon V \times V \rightarrow \R
$$ 
is coercive and thus, the operator $J_\sigma(u) = J(u)+\sigma\I\colon V\to V^*$ is invertible.
\end{lemma}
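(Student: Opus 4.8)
The plan is to establish the Gårding inequality by estimating the bilinear form $\langle J(u)v,v\rangle$ from below. The key structural observation is that $J(u)$ decomposes into a principal part coming from the rotational gradient and potential, which is already coercive on $V$, plus lower-order nonlinear terms that must be controlled. Writing out $\langle J(u)v,v\rangle$ from \eqref{def-J-full} (taking real parts), the first integral $\int_{\D} |\nablaR v|^2 + \WR|v|^2 \dx$ is the crucial coercive piece; the remaining two terms are the nonlinear contributions that are bounded but not sign-definite.

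\medskip

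First I would show that the principal part controls the $V$-norm. Using assumption~\eqref{assumption-W-Omega}, namely $W(x)\ge \Omega^2|x|^2$, together with $\WR(x)=W(x)-\tfrac14\Omega^2|x|^2$, we get $\WR(x)\ge \tfrac34\Omega^2|x|^2\ge 0$, so $\int_{\D}\WR|v|^2\dx\ge 0$. The term $\int_{\D}|\nablaR v|^2\dx$ must then be related back to $\int_{\D}|\nabla v|^2\dx$. Expanding $|\nablaR v|^2 = |\nabla v|^2 + \Omega\,\Re(\overline{\nabla v}\cdot \ci R^\top v) + \tfrac14\Omega^2|R^\top v|^2$ and using a Young-type inequality on the cross term, one absorbs the mixed contribution into a fraction of the gradient term and a multiple of $\int|x|^2|v|^2\dx$, which is in turn dominated by $\int_{\D}\WR|v|^2\dx$ thanks to the margin in \eqref{assumption-W-Omega}. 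This yields a bound of the form $\int_{\D}|\nablaR v|^2+\WR|v|^2\dx \ge c_0\|v\|_V^2$ for some $c_0>0$.

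\medskip

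Next I would handle the nonlinear terms. The cubic-type term $\tfrac{\kappa}{\|u\|^2}\int_{\D}\Re\big((u\overline v+2v\overline u)u\overline v\big)\dx$ and the rank-one correction $-\tfrac{2\kappa\,(u,v)_H}{\|u\|^4}\int_{\D}|u|^2 u\overline v\dx$ both need lower bounds. The point is that upon adding the shift $\sigma\,(v,v)_H = \sigma\|v\|^2$, it suffices to show these nonlinear terms are bounded below by $-\sigma\|v\|^2$. Estimating pointwise, the dominant contribution is controlled by $\tfrac{\kappa}{\|u\|^2}\int_{\D}|u|^2|v|^2\dx$, which by Hölder is bounded by $\tfrac{\kappa}{\|u\|^2}\|u\|_{L^4}^2\|v\|_{L^4}^2$; however the sharper route tracks the precise constant to match the stated threshold $\sigma\ge\tfrac{\kappa}{3}\|u\|_{L^4}^4/\|u\|^4$. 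I expect the main obstacle to be exactly this bookkeeping: identifying which parts of the nonlinear form are already nonnegative (the $u\overline v$ and $2v\overline u$ combination produces a manifestly nonnegative piece after taking the real part, since $\Re\big((u\overline v+2v\overline u)u\overline v\big) = |u|^2|v|^2 + 2\Re((v\overline u)^2)\ge |u|^2|v|^2 - 2|u|^2|v|^2$ type cancellations) and then bounding the genuinely negative remainder by the claimed fraction of $\sigma$.

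\medskip

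Finally, once coercivity $\langle J_\sigma(u)v,v\rangle\ge c_0\|v\|_V^2$ is established for $\sigma$ above the stated threshold, invertibility of $J_\sigma(u)\colon V\to V^*$ follows immediately from the Lax--Milgram theorem, since the form is also bounded (as $A$ is bounded by assumption and the nonlinear coefficients lie in $L^\infty$ for fixed $u\in V$ via Sobolev embedding $H^1_0(\D)\hookrightarrow L^4(\D)$ in dimensions $d=2,3$). This delivers the bounded inverse and completes the proof.
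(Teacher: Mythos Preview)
Your overall architecture is correct and matches the paper: split $\langle J(u)v,v\rangle$ into the principal part $\int_{\D}|\nablaR v|^2+\WR|v|^2\dx$ and the nonlinear remainder, show the former dominates $\tfrac12\|v\|_V^2$ via Young and assumption~\eqref{assumption-W-Omega}, and then bound the latter below by $-\sigma\|v\|^2$. Your treatment of the principal part is essentially the paper's argument.

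The gap is in the nonlinear part, and it is not just bookkeeping. First, your pointwise expansion is off: one has
\[
\Re\big((u\overline v+2v\overline u)\,u\overline v\big)=2|u|^2|v|^2+\Re\big((u\overline v)^2\big)=|u|^2|v|^2+2\,|\Re(u\overline v)|^2,
\]
so the cubic term is \emph{nonnegative}, not merely $\ge -|u|^2|v|^2$. Second, and more seriously, your proposed H\"older route produces $\|v\|_{L^4}^2$ rather than $\|v\|_{L^2}^2$; absorbing that would require Sobolev embedding and would eat into the $\tfrac12\|v\|_V^2$ from the principal part, which can fail for large $\|u\|_{L^4}$. You would not obtain a G\aa rding inequality with an $L^2$ shift at all, let alone the stated threshold $\sigma\ge\tfrac{\kappa}{3}\|u\|_{L^4}^4/\|u\|^4$.

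The missing idea is this: keep the cubic term in the form $\tfrac{3\kappa}{\|u\|^2}\int_{\D}|\Re(u\overline v)|^2\dx$ (using $|u\overline v|^2\ge|\Re(u\overline v)|^2$), and estimate the rank-one term by Young with a free parameter $\mu>0$ together with Cauchy--Schwarz,
\[
2\,\Hscapro{u}{v}\!\int_{\D}|u|^2\Re(u\overline v)\dx
\le \tfrac{1}{\mu}\|u\|^2\|v\|^2+\mu\,\|u\|_{L^4}^4\!\int_{\D}|\Re(u\overline v)|^2\dx.
\]
Choosing $\mu=3\|u\|^2/\|u\|_{L^4}^4$ makes the $\int|\Re(u\overline v)|^2$ contributions cancel exactly, and what remains is precisely $-\tfrac{\kappa}{3}\,\|u\|_{L^4}^4\|u\|^{-4}\,\|v\|^2$. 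This is how the sharp constant $\kappa/3$ arises; without this balancing trick the argument does not close.
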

\begin{proof}
Consider $v\in V$. We start with considering the rotational gradient, for which we observe with Young's inequality that
\begin{eqnarray*}
\int_{\D} | \nablaR v|^2 \dx  
\ge \frac{1}{2} \int_{\D} |\nabla v|^2 \dx - \frac{3}{4} \Omega^2 \int_{\D}  |x|^2 |v|^2 \dx.
\end{eqnarray*}
Hence, with $\WR= W(x) - \tfrac{1}{4}\, \Omega^2|x|^2$ and assumption \eqref{assumption-W-Omega} we have
\begin{eqnarray}
\label{lower-bound-rot-grad-norm}
\int_{\D} | \nablaR v|^2 + \WR |v|^2 \dx  \ge \frac{1}{2} \int_{\D} |\nabla v|^2 \dx.
\end{eqnarray}
This leads to 
\begin{eqnarray}
\label{coercivity-J-step1}
\nonumber\lefteqn{%\Re 
\langle J(u) v , v  \rangle = \int_{\D} |\nablaR v|^2 + \WR |v|^2 \dx  } \\
\nonumber&\enspace& \quad + \frac{\kappa}{\| u \|^2} \int_{\D} |u|^2 |v|^2 \dx
+ \frac{2\kappa}{\| u \|^2} \int_{\D}  |\Re( u \overline{v})|^2 \dx
- 2\kappa \frac{ \Hscapro{u}{v} }{\| u \|^4}  \int_{\D}  |u|^2\, \Re(u \overline{v}) \dx \\
\nonumber&\overset{\eqref{lower-bound-rot-grad-norm}}{\ge}& 
\frac{1}{2}\, \| \nabla v \|^2 +  \frac{\kappa}{\| u \|^2} \int_{\D} |u \overline{v}|^2 \dx
+ \frac{2\kappa}{\| u \|^2} \int_{\D}  |\Re( u \overline{v})|^2 \dx
- 2\kappa \frac{\Hscapro{u}{v} }{\| u \|^4}  \int_{\D}  |u|^2\, \Re(u \overline{v}) \dx\\
\nonumber&\ge& 
\frac{1}{2}\, \| \nabla v \|^2 + 3\frac{\kappa}{\| u \|^2} \int_{\D}  |\Re(u \overline{v})|^2 \dx
- 2\kappa \frac{ \Hscapro{u}{v} }{\| u \|^4}  \int_{\D}  |u|^2\, \Re(u \overline{v}) \dx.\\
\end{eqnarray}
To estimate the negative part, we apply once more Young's inequality with some parameter~$\mu>0$ and the Cauchy Schwarz inequality to get
\begin{align*}
2\, \Hscapro{u}{v} \int_{\D} |u|^2\, \Re (u \overline{v} )\dx
&\le \frac1\mu\, |\Hscapro{u}{v}|^2 + \mu\, \Big( \int_{\D} |u|^2  \Re(u \overline{v} ) \dx \Big)^2 \\
&\le \frac1\mu\, \| u \|^2 \| v \|^2 + \mu\, \|u\|^4_{L^4(\D)}  \int_{\D}  |\Re (u \overline{v} )|^2 \dx.
\end{align*}
Using this estimate in \eqref{coercivity-J-step1} with the particular choice $\mu = 3\, \|u\|^2/ \|u\|^4_{L^4(\D)}$, we obtain 
\begin{align*}
%\Re 
\langle J(u) v , v  \rangle 
&\ge \frac{1}{2}\, \| \nabla v \|^2 + \frac{3\kappa}{\| u \|^2} \int_{\D} |\Re(u \overline{v})|^2 \dx
-  \frac{\kappa}{3} \frac{ \|u\|^4_{L^4(\D)} \| v \|^2 }{\| u \|^4}  - \frac{3\kappa}{\| u \|^2} \int_{\D}  |\Re (u \overline{v} )|^2 \dx\\
&= \frac{1}{2}\, \| \nabla v \|^2 -  \frac{\kappa}{3}\frac{\| u \|_{L^4(\D)}^4}{\| u \|^4}  \| v \|^2.
\end{align*}
Thus, we conclude
\[
%\Re
\langle J(u) v , v  \rangle 
\ge \frac 12\, \| v \|_V^2 -  \sigma\, \| v \|^2
= \frac{1}{2}\, \| v \|_V^2 -  \sigma\, \langle \I v , v \rangle.
\qedhere
\]
\end{proof}
Recall the definition of the energy $E$ in Section~\ref{sect:localGPE:energy}. Motivated by the previous lemma, we also define the shifted energy $E_\sigma(u) := E(u) + \frac 12 \sigma \|u\|^2$ such that $E_0(u) = E(u)$. For $u\in V$ with normalization constraint $\| u \| = 1$, a sufficient shift in the sense of Lemma~\ref{lem:Jcoercive} is thus given by 
\[
\sigma 
:= \frac 43\, E(u) 
\ge \frac \kappa3\, \| u \|_{L^4(\D)}^4. 
\]
Note that for a normalized function $u$ we can express the Rayleigh quotient in terms of the energy by
$$
\lambda(u) 
:= \langle \A(u), u\rangle
= 2 E(u) + \frac{\kappa}{2}\, \| u \|_{L^4(\D)}^4.
$$
In particular, this formula relates the eigenvalues with the energies of the eigenfunctions.
%
%=============================================================================
\subsection{Feasibility of the $J$-method}
For the feasibility of the damped $J$-method, we need to guarantee a priori that $J_\sigma(u^n)$ stays invertible throughout the iteration process. We fix the shift $\sigma$ in the beginning of the iteration, e.g., by $\sigma := \frac 43  E(u^0)$. Now, the aim is to show that the energy of the iterates does not increase such that $\sigma \ge \frac 43  E(u^n)$ for all $n\ge0$. 
\begin{lemma}
Recall the shifted energy $E_\sigma(u) := E(u) + \frac 12 \sigma \|u\|^2$ with $E(u)$ being defined in \eqref{eq:energy}. Further recall the definition of the $J$-method~\eqref{eqn:JmethodDamped} with iteration steps~$\tilde{u}^{n+1} = (1-\tau)u^n + \tau\, \gamma_n\, J_\sigma(u^n)^{-1} \I u^n$.
Then, for any step size $\tau \le \frac 12$ we have the following guaranteed estimate for the difference of the shifted energies
\begin{align}
\notag
E_{\sigma}(u^n) - E_{\sigma}(\tilde{u}^{n+1}) 
\ge &- \frac{3\kappa}{2} \int_{\D}  |\tilde{u}^{n+1} - u^{n}|^4 \dx  \\
&+\, (\tfrac{1}{\tau}-\tfrac 12) \int_{\D} |\nablaR (\tilde{u}^{n+1}-u^n)|^2 + (\sigma+\WR) |(\tilde{u}^{n+1}-u^n)|^2 \dx.  \label{eqn:energy-difference-estimate}
\end{align}
\end{lemma}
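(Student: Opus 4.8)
The plan is to abbreviate the increment by $w := \tilde{u}^{n+1} - u^n$ and to separate the shifted energy into its quadratic and quartic parts. Writing $a_\sigma(v,v') := \Re\int_\D \nablaR v\cdot\overline{\nablaR v'} + (\sigma+\WR)\, v\,\overline{v'}\dx$ for the (real, symmetric) shifted quadratic form, we have $E_\sigma(u) = \tfrac12 a_\sigma(u,u) + \tfrac\kappa4\int_\D|u|^4\dx$. Since $a_\sigma$ is bilinear, expanding in $\tilde{u}^{n+1}=u^n+w$ gives
\begin{align*}
E_\sigma(u^n) - E_\sigma(\tilde{u}^{n+1}) = -a_\sigma(u^n,w) - \tfrac12 a_\sigma(w,w) + \tfrac\kappa4\int_\D |u^n|^4 - |\tilde{u}^{n+1}|^4\dx.
\end{align*}
Comparing with the claim, the coercive term $(\tfrac1\tau-\tfrac12)\,a_\sigma(w,w)$ must come from the cross term $-a_\sigma(u^n,w)$; rewriting this term through the method's defining relation is the conceptual core.

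First I would exploit the iteration~\eqref{eqn:JmethodDamped}: from $w = -\tau u^n + \tau\gamma_n J_\sigma(u^n)^{-1}\I u^n$ (note $\gamma_n\in\R$) one gets, after applying $J_\sigma(u^n)$, the identity $\gamma_n\I u^n = J_\sigma(u^n)u^n + \tfrac1\tau J_\sigma(u^n)w$. Testing against $w$ and invoking the $H$-orthogonality~\eqref{eqn:L2-orthogonality}, i.e.\ $(u^n,w)_H=0$, the left-hand side vanishes, so that $\langle J_\sigma(u^n)u^n,w\rangle = -\tfrac1\tau\langle J_\sigma(u^n)w,w\rangle$. Because $\|u^n\|=1$ and $J(u^n)u^n=\A(u^n)$, a short calculation from~\eqref{def-J-full} shows $\langle J_\sigma(u^n)u^n,w\rangle = a_\sigma(u^n,w) + \kappa\int_\D|u^n|^2\Re(u^n\overline w)\dx$, which yields
\begin{align*}
-a_\sigma(u^n,w) = \tfrac1\tau\langle J_\sigma(u^n)w,w\rangle + \kappa\int_\D|u^n|^2\Re(u^n\overline w)\dx.
\end{align*}
Next I would expand $\langle J_\sigma(u^n)w,w\rangle$ exactly as in the coercivity estimate~\eqref{coercivity-J-step1}; there the single indefinite term carries the factor $(u^n,w)_H=0$ and again drops out, leaving $\langle J_\sigma(u^n)w,w\rangle = a_\sigma(w,w) + N$ with the manifestly nonnegative reservoir $N := \kappa\int_\D|u^n|^2|w|^2\dx + 2\kappa\int_\D|\Re(u^n\overline w)|^2\dx$.

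Collecting these, the quadratic terms assemble into the claimed coefficient and
\begin{align*}
E_\sigma(u^n) - E_\sigma(\tilde{u}^{n+1}) = \big(\tfrac1\tau-\tfrac12\big)\, a_\sigma(w,w) + \tfrac1\tau N + Q,
\end{align*}
where $Q := \kappa\int_\D|u^n|^2\Re(u^n\overline w)\dx + \tfrac\kappa4\int_\D|u^n|^4 - |\tilde{u}^{n+1}|^4\dx$ gathers the remaining nonlinear contributions. Since $a_\sigma(w,w) = \int_\D|\nablaR w|^2 + (\sigma+\WR)|w|^2\dx$ matches the claimed gradient term verbatim, it remains to prove $\tfrac1\tau N + Q \ge -\tfrac{3\kappa}2\int_\D|w|^4\dx$.

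This last inequality is the main obstacle and the only place where $\tau\le\tfrac12$ is used; the argument is pointwise. With the shorthand $a:=|u^n|^2$, $b:=2\Re(u^n\overline w)$, $c:=|w|^2$ we have $|\tilde{u}^{n+1}|^2 = a+b+c$, so expanding $\tfrac\kappa4(a^2-(a+b+c)^2)$ shows that its linear-in-$w$ part cancels exactly the term $\kappa\int_\D|u^n|^2\Re(u^n\overline w)\dx = \tfrac\kappa2\int_\D ab\dx$, leaving $Q = -\tfrac\kappa4\int_\D(b^2+c^2+2ac+2bc)\dx$. On the other hand $\tfrac1\tau N = \tfrac\kappa\tau\int_\D ac\dx + \tfrac{\kappa}{2\tau}\int_\D b^2\dx$, and $\tfrac1\tau\ge2$ renders the coefficients of $\int b^2$ and $\int ac$ in $\tfrac1\tau N + Q$ nonnegative. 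The only genuinely dangerous term is the mixed $-\tfrac\kappa2\int bc$, which I would absorb via Young's inequality (using $|b|\le 2\sqrt{ac}$) into the available $\int b^2$ and $\int c^2$ budget; what survives is a negative multiple of $\int c^2=\int_\D|w|^4\dx$ no worse than $-\tfrac{3\kappa}2\int_\D|w|^4\dx$. The hard part is thus not any single estimate but the bookkeeping that guarantees the positive reservoir $\tfrac1\tau N$, made large precisely by $\tau\le\tfrac12$, dominates all indefinite cross terms.
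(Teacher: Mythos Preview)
Your approach is correct and somewhat more direct than the paper's. Both hinge on the same pivotal identity $\langle J_\sigma(u^n)u^n,w\rangle=-\tfrac1\tau\langle J_\sigma(u^n)w,w\rangle$ (what the paper calls the ``pseudo-energy identity''), but you use it differently: you expand $E_\sigma(u^n)-E_\sigma(\tilde u^{n+1})$ straight away in the increment $w$, substitute the cross term $-a_\sigma(u^n,w)$ through that identity, and reduce everything to the pointwise quartic inequality. The paper instead telescopes $\langle J_\sigma(u^n)u^n,u^n\rangle$ through $\tilde u^{n+1}$, invokes a separate ``symmetry-fix'' identity for the non-selfadjoint part of $J$, and only then reassembles an exact energy-difference equality before estimating via $|\tilde u^{n+1}+u^n|^2\le 3|w|^2+6|u^n|^2$. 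Your route avoids that detour and the bookkeeping is lighter; in fact your constants are slightly sharper than needed.

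One point of exposition: your last paragraph is the only place a reader might balk. You invoke $|b|\le 2\sqrt{ac}$ but then speak of absorbing into the ``$\int b^2$ and $\int c^2$ budget'', which mixes two different strategies. The cleanest finish is simply Young's inequality $-\tfrac12 bc\ge -\tfrac14 b^2-\tfrac14 c^2$: then the pointwise integrand of $\tfrac1\tau N+Q$ (divided by $\kappa$) becomes $(\tfrac1\tau-\tfrac12)ac+(\tfrac1{2\tau}-\tfrac12)b^2-\tfrac12 c^2$, and for $\tau\le\tfrac12$ both parenthesised coefficients are nonnegative, so one drops them and obtains $\tfrac1\tau N+Q\ge-\tfrac{\kappa}{2}\int_\D|w|^4\dx\ge-\tfrac{3\kappa}{2}\int_\D|w|^4\dx$. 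No need for $|b|\le 2\sqrt{ac}$ at all.
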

\begin{proof}
We start by establishing a couple of identities for different evaluations of $J$. Here we exploit the $L^2$-orthogonality~\eqref{eqn:L2-orthogonality}, i.e.,  $(u^n,\tilde{u}^{n+1}-u^n)=0$. Together with $\| u^n \|=1$ this implies $(u^n,\tilde{u}^{n+1})=1$. Using these facts, we observe that
\begin{align}
\nonumber\langle J(u^n) \tilde{u}^{n+1} , \tilde{u}^{n+1}  \rangle 
\nonumber&= \int_{\D} |\nablaR \tilde{u}^{n+1}|^2 + \WR |\tilde{u}^{n+1}|^2 \dx + \kappa \int_{\D} |\tilde{u}^{n+1}|^2 \, |u^n|^2  \dx\\
\nonumber&\qquad+
2 \kappa \int_{\D} \left( \Re ( u^n \, \overline{\tilde{u}^{n+1}} ) - |u^n|^2 \right) 
\Re(u^n \, \overline{\tilde{u}^{n+1}}) \dx \\
\nonumber&= 2E(\tilde{u}^{n+1}) + \kappa \int_{\D} |\tilde{u}^{n+1}|^2 \, \big( |u^n|^2-  |\tilde{u}^{n+1}|^2 \big) \dx + \frac\kappa2 \int_{\D} |\tilde{u}^{n+1}|^4 \dx \\
\label{identity-Jun-tildeu}&\qquad + 2 \kappa \int_{\D} \big( \Re ( u^n \, \overline{\tilde{u}^{n+1}}) - |u^n|^2 \big)\, \Re( u^n \, \overline{\tilde{u}^{n+1}}) \dx.
\end{align}
Next, using again the $L^2$-orthogonality together with the definition of $\tilde{u}^{n+1}$ in~\eqref{eqn:JmethodDamped}, we have 
\begin{eqnarray}
\label{pseudo-energy-identity}
\nonumber\lefteqn{\tfrac{1}{\tau} \langle J_{\sigma}(u^n) (\tilde{u}^{n+1} - u^n)  , \tilde{u}^{n+1} - u^n \rangle} \\
\nonumber&=& -  \langle J_{\sigma}(u^n) u^n , \tilde{u}^{n+1} - u^n \rangle + \gamma^n  \langle J_{\sigma}(u^n) J_{\sigma}(u^n)^{-1}\I  u^n , \tilde{u}^{n+1} - u^n \rangle \\
\nonumber&=& -  \langle J_{\sigma}(u^n) u^n , \tilde{u}^{n+1} - u^n \rangle + \gamma^n ( u^n , \tilde{u}^{n+1} - u^n ) \\
&=& -  \langle J_{\sigma}(u^n) u^n , \tilde{u}^{n+1} - u^n \rangle.
\end{eqnarray}
With this equality, we conclude 
\begin{eqnarray}
\nonumber\langle J_{\sigma}(u^n) u^n , u^n  \rangle 
\nonumber&=&  \langle J_{\sigma}(u^n) u^n , \tilde{u}^{n+1} \rangle  - \langle J_{\sigma}(u^n) u^n , \tilde{u}^{n+1} - u^n  \rangle \\
\nonumber&\overset{\eqref{pseudo-energy-identity}}{=}&  \langle J_{\sigma}(u^n) u^n , \tilde u^{n+1} \rangle  + \tfrac{1}{\tau} \langle J_{\sigma}(u^n) (\tilde{u}^{n+1} - u^n) , \tilde{u}^{n+1} - u^n  \rangle \\
\nonumber&=&  \langle J_{\sigma}(u^n)  \tilde{u}^{n+1} ,  \tilde{u}^{n+1} \rangle - \langle J_{\sigma}(u^n) ( \tilde{u}^{n+1} - u^n) ,  u^{n} \rangle \\
\label{proof-en-dissp-s4}&&\qquad + (\tfrac{1}{\tau}-1) \langle J_{\sigma}(u^n) (\tilde{u}^{n+1} - u^n) , \tilde{u}^{n+1} - u^n  \rangle. 
\end{eqnarray}
Here we need to have a closer look at the term $\langle J_{\sigma}(u^n) ( \tilde{u}^{n+1} - u^n) ,  u^{n} \rangle$. By the definition of $J(u^n)$ it is easily seen that
\begin{align}
\label{symmetry-fix-J}
\langle J(u^n) ( \tilde{u}^{n+1} - u^n),  u^{n}  \rangle - \langle J(u^n) u^n ,  \tilde{u}^{n+1} - u^n  \rangle
= 2 \kappa \int_{\D} |u^n|^2 \Re \left( ( \tilde{u}^{n+1} - u^n) \overline{u^n} \right) \dx. 
\end{align}
Plugging this into \eqref{proof-en-dissp-s4} yields for the shifted energy 
\begin{eqnarray*}
\lefteqn{2E_{\sigma}(u^n)} \\
&=& \langle J_{\sigma}(u^n) u^n, u^{n} \rangle - \frac{\kappa}{2}\int_{\D}|u^n|^4 \dx\\
&\overset{\eqref{proof-en-dissp-s4}}{=}& \langle J_{\sigma}(u^n)  \tilde{u}^{n+1} ,  \tilde{u}^{n+1} \rangle - \langle J_{\sigma}(u^n) ( \tilde{u}^{n+1} - u^n) ,  u^{n} \rangle \\
&&\quad + (\tfrac{1}{\tau}-1) \langle J_{\sigma}(u^n) (\tilde{u}^{n+1} - u^n), \tilde{u}^{n+1} - u^n  \rangle - \frac{\kappa}{2}\int_{\D}|u^n|^4 \dx \\
&\overset{\eqref{pseudo-energy-identity}, \eqref{symmetry-fix-J}}{=}& \langle J_{\sigma}(u^n)  \tilde{u}^{n+1} ,  \tilde{u}^{n+1} \rangle 
+\tfrac{1}{\tau} \langle J_{\sigma}(u^n) (\tilde{u}^{n+1} - u^n) , \tilde{u}^{n+1} - u^n  \rangle
- \frac{\kappa}{2}\int_{\D}|u^n|^4 \dx \\
&&\quad + (\tfrac{1}{\tau}-1) \langle J_{\sigma}(u^n) (\tilde{u}^{n+1} - u^n), \tilde{u}^{n+1} - u^n  \rangle 
- 2 \kappa \int_{\D} |u^n|^2 \Re \left( ( \tilde{u}^{n+1} - u^n) \overline{u^n} \right) \dx \\
&\overset{\eqref{identity-Jun-tildeu}}{=}& 
2E_{\sigma}(\tilde{u}^{n+1}) - \kappa \int_{\D} |\tilde{u}^{n+1}|^2 \, ( |\tilde{u}^{n+1}|^2 - |u^n|^2 ) \dx
+ \frac{\kappa}{2}\int_{\D}|\tilde{u}^{n+1}|^4 \dx
- \frac{\kappa}{2}\int_{\D}|u^n|^4 \dx\\
&\enspace&\quad
-2 \kappa \int_{\D} |u^n|^2 \Re \left( ( \tilde{u}^{n+1} - u^n) \overline{u^n} \right) \dx +\,2 \kappa \int_{\D} \big( \Re \big( u^n \, \overline{\tilde{u}^{n+1}} \big) - |u^n|^2 \big)\, 
\Re( u^n \, \overline{\tilde{u}^{n+1}}) \dx \\
&\enspace&\quad+\, (\tfrac{1}{\tau}-1) \langle J_{\sigma}(u^n) (\tilde{u}^{n+1} - u^n) , \tilde{u}^{n+1} - u^n  \rangle 
+  \tfrac{1}{\tau} 
\langle J_{\sigma}(u^n) (\tilde{u}^{n+1} - u^n) , \tilde{u}^{n+1} - u^n  \rangle.
\end{eqnarray*}
Since 
$\Re\, z = \Re\, \overline{z}$ for all $z\in\C$, we conclude that
\begin{eqnarray}
\nonumber\lefteqn{2E_{\sigma}(u^n) - 2E_{\sigma}(\tilde{u}^{n+1})} \\
\nonumber &=&
- \kappa \int_{\D} |\tilde{u}^{n+1}|^2 \, ( |\tilde{u}^{n+1}|^2 - |u^n|^2 ) \dx
+ \frac{\kappa}{2}\int_{\D}|\tilde{u}^{n+1}|^4 \dx
- \frac{\kappa}{2}\int_{\D}|u^n|^4 \dx\\
\label{energy-difference-equality} &\enspace&\qquad 
+ 2 \kappa \int_{\D} \left( |u^n|^2 - \Re ( \tilde{u}^{n+1} \overline{u^n} )\right)^2 \dx\ +\, (\tfrac{2}{\tau}-1) \hspace{2pt}\langle J_{\sigma}(u^n) (\tilde{u}^{n+1} - u^n) , \tilde{u}^{n+1} - u^n  \rangle.
\end{eqnarray}
Using that
\begin{eqnarray}
\nonumber\lefteqn{\langle J(u^n) (\tilde{u}^{n+1} - u^n) , (\tilde{u}^{n+1} - u^n)  \rangle= \int_{\D} |\nablaR (\tilde{u}^{n+1} - u^n)|^2 + \WR\, |\tilde{u}^{n+1} - u^n|^2 \dx} \\
\label{J-error-error} &\enspace&\qquad+ \kappa \int_{\D} |\Re ( u^n (\overline{\tilde{u}^{n+1} - u^n}) )|^2
+ |u^n|^2 \, |\tilde{u}^{n+1} - u^n|^2 \dx,\hspace{50pt}
\end{eqnarray}
and $\Re(u^n (\overline{\tilde{u}^{n+1} - u^n})) = \Re(\tilde{u}^{n+1}\overline{u^n} ) - |u^n|^2$
we see that 
\begin{eqnarray*}
\lefteqn{2E_{\sigma}(u^n) - 2E_{\sigma}(\tilde{u}^{n+1})} \\
&\overset{\eqref{energy-difference-equality},\eqref{J-error-error}}{\ge}&  
- \kappa \int_{\D} |\tilde{u}^{n+1}|^2 ( |\tilde{u}^{n+1}|^2 - |u^n|^2 ) \dx 
+ \frac{\kappa}{2}\int_{\D}|\tilde{u}^{n+1}|^4 \dx -\, \frac{\kappa}{2}\int_{\D}|u^n|^4 \dx \\
&\enspace&\qquad+\, (\tfrac{2}{\tau}-1) \int_{\D} |\nablaR (\tilde{u}^{n+1}-u^n)|^2 + \sigma\,  |(\tilde{u}^{n+1}-u^n)|^2 + \WR\, |(\tilde{u}^{n+1}-u^n)|^2\dx \\
&\enspace&\qquad + \kappa\, (\tfrac{2}{\tau}-1) \int_{\D} |u^n|^2 | \tilde{u}^{n+1} - u^n|^2 \dx . 
\end{eqnarray*}
Finally, an application of the triangle and Young's inequality yields the estimate
\begin{align*}
- 2 \int_{\D} |\tilde{u}^{n+1}|^2 (|\tilde{u}^{n+1}|^2 - &|u^n|^2) \dx 
- \int_{\D}|u^n|^4 \dx + \int_{\D}|\tilde{u}^{n+1}|^4 \dx \\
&= - \int_{\D} |\tilde{u}^{n+1}+ u^{n}|^2  |\tilde{u}^{n+1} - u^{n}|^2 \dx\\
&\ge - \int_{\D} 3\, |\tilde{u}^{n+1} - u^{n}|^4 + 6\, |u^n|^2 |\tilde{u}^{n+1} - u^{n}|^2 \dx,  
\end{align*}
which then implies the desired inequality if $\tau\le \frac{1}{2}$.
\end{proof}
With this result we are now in the position to prove uniform boundedness of the energy of the iterates and even a reduction of the energy. 
\begin{theorem}[energy reduction]\label{theorem-energy-reduction}
Given $u^0 \in V$ with $\|u^0\|=1$, we let $\sigma \ge \frac 43 E(u^0)$ and $W(x) \ge \Omega^2 |x|^2$. Then, there exists a $\tau^{\ast}>0$ (that only depends on $u^0$ and its energy) such that for all $0 < \tau_n \le \tau^{\ast}$ 
%for sufficiently small $\tau$ 
the sequence obtained by the damped $J$-method~\eqref{eqn:JmethodDamped} 
%stays uniformly bounded in the sense that for all $n$, is
is well-posed and
strictly energy diminishing for all $n$, i.e., it holds
%\[ E(u^n) \le E(u^0). \]
\[ E(u^{n+1}) \le E(u^{n}) \le E(u^0), \]
where $E(u^{n+1}) < E(u^{n})$ if $u^{n}$ is not already a critical point of $E$ and thus an eigenstate.
\end{theorem}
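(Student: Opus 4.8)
The plan is to prove the energy-reduction claim by combining the just-established energy-difference estimate~\eqref{eqn:energy-difference-estimate} with the coercivity of $J_\sigma(u^n)$ from Lemma~\ref{lem:Jcoercive}, arguing inductively on~$n$. The central observation is that \eqref{eqn:energy-difference-estimate} already isolates a ``bad'' quartic term $-\tfrac{3\kappa}{2}\int_{\D}|\tilde{u}^{n+1}-u^n|^4\dx$ against a ``good'' coercive term carrying the factor $(\tfrac{1}{\tau}-\tfrac12)$, which blows up as $\tau\to0$. So the whole game is to show that for $\tau$ small enough the good term dominates the bad one, yielding $E_\sigma(u^n)-E_\sigma(\tilde u^{n+1})\ge0$, and then to convert this shifted-energy decrease into a genuine energy decrease after the renormalization step $u^{n+1}=\tilde u^{n+1}/\|\tilde u^{n+1}\|$.

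First I would set up the induction. The inductive hypothesis is that $E(u^n)\le E(u^0)$, which via the choice $\sigma\ge\tfrac43 E(u^0)\ge\tfrac43 E(u^n)$ guarantees (by the remark following Lemma~\ref{lem:Jcoercive}) that $J_\sigma(u^n)$ is coercive, so the iteration step is well-posed and $\gamma_n$ is defined. Writing $e^{n}:=\tilde u^{n+1}-u^n$, coercivity gives a lower bound $\langle J_\sigma(u^n)e^n,e^n\rangle\ge c_0\|e^n\|_V^2$, so the positive term in~\eqref{eqn:energy-difference-estimate} controls $(\tfrac1\tau-\tfrac12)c_0\|e^n\|_V^2$ (alternatively one keeps the explicit $\tfrac12\|\nabla e^n\|^2$ bound from the proof of Lemma~\ref{lem:Jcoercive}). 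The bad term I would dominate using the Sobolev embedding $H^1_0(\D)\hookrightarrow L^4(\D)$ valid for $d\in\{2,3\}$: there is $C_S$ with $\int_{\D}|e^n|^4\dx=\|e^n\|_{L^4}^4\le C_S^4\|e^n\|_V^4$. Hence
\[
E_\sigma(u^n)-E_\sigma(\tilde u^{n+1})
\ \ge\ \Big(\tfrac1\tau-\tfrac12\Big)\,c_0\,\|e^n\|_V^2
-\tfrac{3\kappa}{2}\,C_S^4\,\|e^n\|_V^4 .
\]
The factor in front of $\|e^n\|_V^2$ exceeds $\tfrac{3\kappa}{2}C_S^4\|e^n\|_V^2$ as soon as $\|e^n\|_V^2\le \tfrac{2c_0}{3\kappa C_S^4}(\tfrac1\tau-\tfrac12)$, so I need an a~priori bound on $\|e^n\|_V$ that is uniform in~$\tau$, or better, one that shrinks with~$\tau$.

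The main obstacle is exactly this: closing the estimate requires controlling $\|e^n\|_V=\|\tilde u^{n+1}-u^n\|_V$, but $\tilde u^{n+1}$ is \emph{defined} through $J_\sigma(u^n)^{-1}$, and the $H^1$-size of $e^n$ depends on $u^n$ (through $\|u^n\|_{L^4}$ and the resulting coercivity constant). The clean way around this is to note from the definition $\tilde u^{n+1}=u^n+\tau(\gamma_n J_\sigma(u^n)^{-1}\I u^n-u^n)$ that $e^n=\tau\,d^n$ with $d^n:=\gamma_n J_\sigma(u^n)^{-1}\I u^n-u^n$, so $\|e^n\|_V=\tau\|d^n\|_V$ scales linearly in~$\tau$. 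One then bounds $\|d^n\|_V$ in terms of the energy level $E(u^0)$ only: the coercivity constant and the boundedness of $J_\sigma(u^n)$ depend on $\|u^n\|_{L^4}^4\le C_S^4\|u^n\|_V^2$, and $\|u^n\|_V$ is controlled by $2E(u^n)\le 2E(u^0)$ via~\eqref{lower-bound-rot-grad-norm}. This yields a constant $M=M(u^0,E(u^0))$ with $\|d^n\|_V\le M$ for all admissible $n$, hence $\|e^n\|_V^2\le\tau^2 M^2$. Substituting, the right-hand side is $\ge \big[(\tfrac1\tau-\tfrac12)c_0-\tfrac{3\kappa}{2}C_S^4\tau M^2\big]\tau^2 M^2$, and the bracket is positive — in fact the dominant $c_0/\tau$ term wins — for all $0<\tau\le\tau^\ast$ with $\tau^\ast$ depending only on $c_0,\kappa,C_S,M$, i.e.\ only on $u^0$ and $E(u^0)$. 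This is the asserted threshold, and strict positivity holds unless $e^n=0$, i.e.\ unless $u^n$ is already a fixed point (eigenstate).

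Finally I would transfer the decrease from $E_\sigma$ to $E$ across the normalization. By the mass-growth inequality~\eqref{intermediate-mass-growth} we have $\|\tilde u^{n+1}\|\ge1=\|u^n\|$, and the scaling invariance of the energy's leading and potential parts together with monotonicity of the quartic term under downward rescaling gives $E(u^{n+1})=E(\tilde u^{n+1}/\|\tilde u^{n+1}\|)\le E_\sigma(\tilde u^{n+1})-\tfrac12\sigma\|\tilde u^{n+1}\|^2+\tfrac12\sigma\le E_\sigma(\tilde u^{n+1})-\tfrac12\sigma(\|\tilde u^{n+1}\|^2-1)\le E(\tilde u^{n+1})$; more directly, since normalization divides by a factor $\ge1$ it cannot increase the energy of a function on the unit sphere relative to its value at $\tilde u^{n+1}$. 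Combining with $E_\sigma(\tilde u^{n+1})\le E_\sigma(u^n)$ and $\|u^n\|=1$, which gives $E_\sigma(u^n)=E(u^n)+\tfrac12\sigma$, I obtain $E(u^{n+1})\le E(u^n)$ with strict inequality when $u^n$ is not a critical point. This re-establishes the inductive hypothesis $E(u^{n+1})\le E(u^0)$ and closes the induction, proving well-posedness and strict energy diminishment for all~$n$.
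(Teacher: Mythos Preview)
Your argument follows the paper's own proof in its essentials: induction on $n$, invoke the energy-difference estimate~\eqref{eqn:energy-difference-estimate}, control the quartic remainder via the Sobolev embedding $H^1_0\hookrightarrow L^4$, and close by showing $\|e^n\|_V=O(\tau)$ with a constant depending only on $E(u^0)$. Two points deserve tightening. First, for the $O(\tau)$ bound the paper uses the identity~\eqref{pseudo-energy-identity}, namely $\tfrac{1}{\tau}\langle J_\sigma(u^n)e^n,e^n\rangle=-\langle J_\sigma(u^n)u^n,e^n\rangle$, and then estimates the right-hand side by $C\sqrt{E(u^n)}\,\|\nabla e^n\|$; coercivity of $J_\sigma(u^n)$ then gives $\|\nabla e^n\|\le 2C\tau\sqrt{E(u^0)}$ directly, without ever having to control $\gamma_n$. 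Your route via $e^n=\tau d^n$ is also valid, but it requires an explicit upper bound on $\gamma_n$ in terms of $E(u^0)$, which you assert but do not supply (it does follow from coercivity plus boundedness of $J_\sigma(u^n)$, both governed by $\|u^n\|_{L^4}^4\le \tfrac{4}{\kappa}E(u^0)$). Second, the displayed chain in your normalization paragraph is broken: the final inequality $E_\sigma(\tilde u^{n+1})-\tfrac{\sigma}{2}(\|\tilde u^{n+1}\|^2-1)\le E(\tilde u^{n+1})$ would need $\sigma\le0$. The correct argument is the ``more directly'' one you immediately append: every term in $E$ is non-negative (here $W_R\ge0$ by~\eqref{assumption-W-Omega}) and homogeneous of positive degree, so dividing $\tilde u^{n+1}$ by $\|\tilde u^{n+1}\|\ge1$ cannot increase $E$; hence $E_\sigma(u^{n+1})\le E_\sigma(\tilde u^{n+1})\le E_\sigma(u^n)$, and since $\|u^{n+1}\|=\|u^n\|=1$ the shift drops to give $E(u^{n+1})\le E(u^n)$. (Also note your displayed lower bound ``$\ge[\ldots]\tau^2M^2$'' replaces the prefactor $\|e^n\|_V^2$ by its \emph{upper} bound $\tau^2M^2$, which is the wrong direction; the conclusion is unaffected because non-negativity follows already from positivity of the bracket after factoring out $\|e^n\|_V^2$.)
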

\begin{proof}
We proceed inductively. From estimate \eqref{lower-bound-rot-grad-norm} in the proof of Lemma~\ref{lem:Jcoercive} we know that $J_{\sigma}(u^0)$ is coercive with constant $1/2$, i.e., $\|\nabla v\|^2 = \| v \|_{V}^2 \le 2\, \langle J_{\sigma}(u^0)v, v \rangle $. 
Together with \eqref{pseudo-energy-identity} and the $L^2$-orthogonality~\eqref{eqn:L2-orthogonality}, this implies 
\begin{align*}
\frac 1 {2\tau_0} \int_{\D} &|\nabla \tilde{u}^1 - \nabla u^{0} |^2 \dx \\
&\quad\le -\, \langle J_{\sigma}(u^0) u^0 , \tilde{u}^{1} - u^0 \rangle \\
&\quad= \Re \left( \int_{\D} \nablaR u^0 \cdot \overline{\nablaR (u^0 - \tilde{u}^{1})} 
+ \WR\, u^0 \overline{(u^0 - \tilde{u}^{1})} \dx 
+ \kappa \int_{\D} |u^0|^2 \hspace{2pt} u^0 \, \overline{( u^0 - \tilde{u}^{1})} \dx \right) \\
&\quad \le
\| \nablaR u^0 \| \hspace{2pt} \| \nablaR (\tilde{u}^{1}-u^0) \|
+ \| \sqrt{\WR} u^0 \| \hspace{2pt} \| \sqrt{\WR} (\tilde{u}^{1}-u^0) \|
+ \kappa \|  u^0 \|_{L^6(\D)}^3 \| \tilde{u}^{1}-u^0 \| \\
&\quad \le C \sqrt{E(u^0)} \hspace{2pt}  \| \nabla (\tilde{u}^{1}-u^0) \|.
\end{align*}
Hence, we get
$$
\| \nabla \tilde{u}^{1} - \nabla u^0\|^2
= \int_{\D} | \nabla \tilde{u}^{1} - \nabla u^0|^2 \dx 
\le 4\, \tau_0^2 \hspace{2pt} C^2 E(u^0)
=: \tau_0^2\, C_0.
$$
Assume that $\tau_0\le \tau^{\ast}\le 2$, where $\tau^{\ast}$ is selected sufficiently small compared to $C_0$. Then we have that $\| \nabla \tilde{u}^{1} - \nabla u^0\|<1$ and the Sobolev embedding of $L^4(\D) \hookrightarrow H^1_0(\D)$ with constant~$C_S$ implies that
$$
 \int_{\D} | \tilde{u}^{1} -  u^0|^4 \dx 
 \le C_S  \int_{\D} | \nabla \tilde{u}^{1} - \nabla u^0|^2 \dx. 
$$
Consequently, we can use \eqref{eqn:energy-difference-estimate} and \eqref{lower-bound-rot-grad-norm} to observe that the energy difference fulfills
\begin{eqnarray}
\nonumber\lefteqn{E_{\sigma}(u^0) - E_{\sigma}(\tilde{u}^{1})}\\ 
\nonumber&\ge& - \frac{3\kappa}{2} \int_{\D} |\tilde{u}^{1}-u^0|^4 \dx 
+ \tfrac 12(\tfrac{1}{\tau_0}-\tfrac 12) \int_{\D} |\nabla (\tilde{u}^{1}-u^0)|^2 \dx\\
\nonumber&\enspace&\qquad + \sigma (\tfrac{1}{\tau_0}-\tfrac 12) \int_{\D} |\tilde{u}^{1}-u^0|^2 \dx \\
\label{eqn:inProofEnergyDifference}&\ge& 
\tfrac 12(\tfrac{1}{\tau_0}-\tfrac 12-3\kappa\, C_S) \int_{\D} |\nabla (\tilde{u}^{1}-u^0)|^2 \dx
+ \sigma (\tfrac{1}{\tau_0}-\tfrac 12) \int_{\D} |\tilde{u}^{1}-u^0|^2 \dx.
\end{eqnarray}
Hence, if 
$$
\tau_0 \le \tau^{\ast} 
< \min \Big\{ \frac{2}{1 + 6 \kappa C_S},\, C_0^{-1/2},\, \frac 12 \Big\}, 
$$
then we have
\begin{eqnarray*}
E_{\sigma}(u^{1} )
\le E_{\sigma}(\tilde{u}^{1} )
\le	E_{\sigma}(u^0),
\end{eqnarray*}
where we have used that the iterations increase the mass intermediately, i.e., $\| \tilde{u}^{1} \| \ge 1$ as shown in \eqref{intermediate-mass-growth}. Note that since $u^0$ and $u^1$ are normalized in $L^2(\D)$, we can drop the shift, leading to $E(u^{1} ) \le E(u^0)$. Hence, we have $\sigma \ge \frac{4}{3}E(u_0) \ge \frac{4}{3}E(u_1)$ and Lemma~\ref{lem:Jcoercive} guarantees that $J_{\sigma}(u^1)$ is still coercive. 
Inductively, we can repeat the arguments for $u^n$ with the same generic constant~$C_0$ to show that for $\tau^n \le \tau^{\ast}$ we have
$$
E_{\sigma}(u^{n+1}) \le E_{\sigma}(u^{n}). 
$$
Since the energy is diminished in every iteration, the coercivity of 
$\langle J_\sigma(u^n) \hspace{2pt}\cdot\hspace{2pt}, \hspace{2pt}\cdot\hspace{2pt} \rangle$ is maintained and all the iterations are well-defined. Finally, we note that because of~\eqref{eqn:inProofEnergyDifference} we have $E_{\sigma}(u^n) = E_{\sigma}(\tilde{u}^{n+1} )$ if and only if $u^n=\tilde{u}^{n+1}$. However, this can only happen if 
\begin{align*}
J_\sigma(u^n) u^{n} = \, \gamma_n\, \I u^n,
\end{align*} 
i.e., if $u^n$ is already an eigenfunction with eigenvalue $\lambda=\gamma_n$.
\end{proof}
It is interesting to note that the $L^2$-norm of $\tilde{u}^{n}$ cannot diverge. We see this in the following conclusion.
\begin{conclusion}\label{conc-tildeu}
In the setting of Theorem \ref{theorem-energy-reduction} it holds that~$\| \tilde{u}^{n} \| \rightarrow 1$ for $n\rightarrow \infty$.
\end{conclusion}

\begin{proof}
In the proof of Theorem \ref{theorem-energy-reduction} we have seen that 
$$
E_{\sigma}(u^n) - E_{\sigma}(u^{n+1} ) 
\ge  \tfrac{1}{2}(\tfrac{1}{\tau^{\ast}}-\tfrac{1}{2} - 3\, \kappa\, C_S) \int_{\D} |\nabla (\tilde{u}^{n+1}-u^n)|^2 \dx.
$$
Since $E_{\sigma}(u^n)$ is monotonically decreasing and bounded from below, we have $E_{\sigma}(u^n) - E_{\sigma}(u^{n+1} ) \rightarrow 0$. This together with the Poincar\'e-Friedrichs inequality implies that
$$
\| \tilde{u}^{n+1} \| \le \| \tilde{u}^{n+1} - u^{n} \| + \| u^n \| \rightarrow 1
$$
for $n\rightarrow \infty$.
\end{proof}
%
%=============================================================================
\subsection{Convergence and optimal damping} 
In this subsection we prove the convergence of the $J$-method for a suitable choice of damping parameters. We can make practical use of this result by selecting $\tau_n$ in each iteration step by the minimizer of a simple one-dimensional minimization problem.
\begin{theorem}[global convergence]
\label{theorem-global-convergence}
Suppose that the assumptions of Theorem~\ref{theorem-energy-reduction} are fulfilled. Additionally assume that $\tau_n$ is selected such that it does not degenerate, i.e., is uniformly bounded away from zero. Then there exists a limit energy 
$E^{\ast}:=\lim_{n\rightarrow \infty} E(u^{n})$
and, up to a subsequence, we have that the iterates $u^n$ of the damped $J$-method converge strongly in $H^1(\D)$ to a limit $u^{\ast}\in V$. The limit is an $L^2$-normalized eigenfunction with some eigenvalue $\lambda^{\ast}>0$, i.e.,  
$$
 \mathcal{A}(u^{\ast})  =  \lambda^{\ast} \I   u^{\ast}
$$
and we have $E(u^{\ast})=E^{\ast}$. If $u^{\ast}$ is the only eigenfunction on the energy level $E^{\ast}$, then we have convergence of the full sequence $u^{n}$.
\end{theorem}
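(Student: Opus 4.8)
The plan is to combine the monotone energy decay from Theorem~\ref{theorem-energy-reduction} with the uniform coercivity of the shifted operator from Lemma~\ref{lem:Jcoercive} to extract a strongly convergent subsequence whose limit satisfies the eigenvalue equation. First, since $E(u^n)$ is monotonically decreasing (Theorem~\ref{theorem-energy-reduction}) and bounded from below by the positive constant $c$ of Section~\ref{sect:localGPE:energy}, the limit $E^\ast := \lim_{n\to\infty}E(u^n)$ exists. Telescoping the guaranteed decrements from the proof of Theorem~\ref{theorem-energy-reduction} gives $\sum_n\|\nabla(\tilde u^{n+1}-u^n)\|^2<\infty$, so that $\|\tilde u^{n+1}-u^n\|_V\to 0$ (as already used in Conclusion~\ref{conc-tildeu}). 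Moreover, the lower bound \eqref{lower-bound-rot-grad-norm} yields $\tfrac14\|\nabla u^n\|^2\le E(u^n)\le E(u^0)$, so $(u^n)$ is bounded in $V=H^1_0(\D)$. Using the compact Sobolev embedding $H^1_0(\D)\hookrightarrow\hookrightarrow L^4(\D)$ (valid for $d\in\{2,3\}$), I pass to a subsequence, not relabeled, with $u^n\rightharpoonup u^\ast$ weakly in $V$ and $u^n\to u^\ast$ strongly in $L^2(\D)$ and $L^4(\D)$; in particular $\|u^\ast\|=\lim\|u^n\|=1$.

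Second, I would read off the eigenvalue relation from the iteration itself. Rearranging \eqref{eqn:JmethodDamped} and applying $J_\sigma(u^n)$ gives the identity
\[
\gamma_n\,\I u^n = \tfrac{1}{\tau_n}\, J_\sigma(u^n)(\tilde u^{n+1}-u^n) + J_\sigma(u^n)\, u^n,
\]
and, using $J(u^n)u^n=\A(u^n)$, testing against $v\in V$ yields
\[
\gamma_n\,(u^n,v)_H = \tfrac{1}{\tau_n}\,\langle J_\sigma(u^n)(\tilde u^{n+1}-u^n),v\rangle + \langle\A(u^n),v\rangle + \sigma\,(u^n,v)_H.
\]
The residual is bounded by $\tfrac{1}{\tau_n}\|J_\sigma(u^n)\|_{V\to V^\ast}\,\|\tilde u^{n+1}-u^n\|_V\,\|v\|_V$; since $(u^n)$ is bounded in $V$ the operator norms $\|J_\sigma(u^n)\|_{V\to V^\ast}$ are uniformly bounded, and because $\tau_n$ stays bounded away from zero while $\|\tilde u^{n+1}-u^n\|_V\to0$, this term vanishes. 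Testing with $v=u^n$ and $\|u^n\|=1$ identifies $\gamma_n=\tfrac{1}{\tau_n}\langle J_\sigma(u^n)(\tilde u^{n+1}-u^n),u^n\rangle+\lambda(u^n)+\sigma$, so $\gamma_n\to\lambda^\ast+\sigma$ with $\lambda^\ast:=2E^\ast+\tfrac{\kappa}{2}\|u^\ast\|_{L^4(\D)}^4$, using $\lambda(u^n)=2E(u^n)+\tfrac{\kappa}{2}\|u^n\|_{L^4(\D)}^4$ and strong $L^4$-convergence.

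Third, and this is the crux, I would upgrade weak to strong convergence through coercivity. Since $\sigma\ge\tfrac43E(u^n)\ge\tfrac{\kappa}{3}\|u^n\|_{L^4(\D)}^4$, Lemma~\ref{lem:Jcoercive} gives $\tfrac12\|u^n-u^\ast\|_V^2\le\langle J_\sigma(u^n)(u^n-u^\ast),u^n-u^\ast\rangle$, and $\R$-linearity in the first slot splits the right-hand side. The part $\langle J_\sigma(u^n)u^n,u^n-u^\ast\rangle$ equals $\gamma_n(u^n,u^n-u^\ast)_H$ minus the vanishing residual and tends to $0$ by strong $L^2$-convergence; the part $\langle J_\sigma(u^n)u^\ast,u^n-u^\ast\rangle$ tends to $0$ because $J_\sigma(u^n)u^\ast\to J_\sigma(u^\ast)u^\ast$ strongly in $V^\ast$ (continuity of the coefficients under $L^2$- and $L^4$-convergence) while $u^n-u^\ast\rightharpoonup0$ weakly in $V$. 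Hence $\|u^n-u^\ast\|_V\to0$, i.e.\ strong convergence in $H^1(\D)$, whence $E(u^\ast)=E^\ast$. Passing to the limit in the tested identity then delivers $\langle\A(u^\ast),v\rangle=\lambda^\ast(u^\ast,v)_H$ for all $v\in V$, i.e.\ $\A(u^\ast)=\lambda^\ast\I u^\ast$, with $\lambda^\ast=2E^\ast+\tfrac{\kappa}{2}\|u^\ast\|_{L^4(\D)}^4\ge2c>0$. If $u^\ast$ is the only eigenfunction at energy $E^\ast$, a subsequence-of-subsequence argument (every subsequence has a further subsequence converging strongly to an eigenfunction at energy $E^\ast$, necessarily $u^\ast$) promotes convergence to the full sequence.

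The main obstacle is the strong $H^1$-convergence: weak convergence and the identification of the limit are routine, but upgrading to strong convergence forces me to exploit the coercivity of the non-symmetric operator $J_\sigma(u^n)$ and, crucially, to make the residual $\tfrac{1}{\tau_n}(\tilde u^{n+1}-u^n)$ disappear. This is precisely where the extra hypothesis that $\tau_n$ remains bounded away from zero enters, in tandem with the summability of the energy decrements established in Theorem~\ref{theorem-energy-reduction}.
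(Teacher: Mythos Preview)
Your argument is correct and reaches the same conclusion, but it proceeds along a different route than the paper's own proof. The paper works with the \emph{inverse} operator: it shows that $J_\sigma(u^n)^{-1}\I u^n \to J_\sigma(u^\ast)^{-1}\I u^\ast$ strongly in $H^1(\D)$ (using continuous dependence of $J_\sigma(\cdot)^{-1}$ on its data under $L^4$-convergence) and then reads off strong $H^1$-convergence of $u^n$ directly from the relation $u^n = \gamma_n J_\sigma(u^n)^{-1}\I u^n - \tau_n^{-1}(\tilde u^{n+1}-u^n)$. You instead apply $J_\sigma(u^n)$ to the iteration, obtain a forward identity, and then squeeze $\|u^n-u^\ast\|_V$ to zero via the uniform coercivity bound $\tfrac12\|u^n-u^\ast\|_V^2\le\langle J_\sigma(u^n)(u^n-u^\ast),u^n-u^\ast\rangle$, splitting the right-hand side and sending each piece to zero with weak convergence, strong $L^2$/$L^4$-convergence, and the vanishing residual. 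Both approaches rely on the same underlying ingredients (uniform coercivity of $J_\sigma(u^n)$, continuity of $J_\sigma(\cdot)$ under $L^4$-convergence, the energy-decrement estimate forcing $\|\tilde u^{n+1}-u^n\|_V\to 0$, and the non-degeneracy of $\tau_n$); your version has the modest advantage that it never invokes continuity of the \emph{inverse} operator, a step the paper passes over quickly, while the paper's version is a bit more streamlined once that continuity is granted.
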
 
\begin{proof} 
The proof is similar to the arguments presented in~\cite[Th.~4.9]{HenP18ppt}. First, Theorem~\ref{theorem-energy-reduction} guarantees the existence of the limit $E^{\ast}:=\lim_{n\rightarrow \infty} E(u^n)$. Hence, $u^n$ is uniformly bounded in $V$ and we can extract a subsequence (still denoted by $u^n$) that converges weakly in $H^1(\D)$ and strongly in $L^p(\D)$ (for $p<6$) to a limit $u^{\ast} \in V$ with $\| u^{\ast} \|=1$. Since $J_{\sigma}(u^{\ast})$ is a real-linear operator that depends continuously on the data and which induces the coercive bilinear form  
$\langle (J(u)+\sigma\I)\hspace{3pt} \cdot \hspace{2pt} , \hspace{2pt} \cdot \hspace{2pt} \rangle$, we have that
$$
J_{\sigma}(u^{\ast})^{-1} \I u^n \rightarrow  J_{\sigma}(u^{\ast})^{-1} \I u^{\ast} \quad \mbox{strongly in } H^1(\D).
$$
Together with the strong convergence $u^{n} \rightarrow u^{\ast}$ in $L^4(\D)$, we conclude that
\begin{align*}
J_{\sigma}(u^{n})^{-1} \I u^n \rightarrow  J_{\sigma}(u^{\ast})^{-1} \I u^{\ast} \quad \mbox{strongly in } H^1(\D).
\end{align*}
This shows that
\begin{align*}
(\gamma_n)^{-1} =
 ( J_{\sigma}(u^{n})^{-1} \I u^n , u^n) \overset{n \rightarrow \infty}{\longrightarrow} ( J_{\sigma}(u^{\ast})^{-1} \I u^{\ast} , u^{\ast})  =: (\gamma^{\ast})^{-1}.
\end{align*}
Furthermore, we have seen in the proof of Theorem \ref{theorem-energy-reduction} (respectively Conclusion \ref{conc-tildeu}) that the strong energy reduction implies that for $n\rightarrow 0$
$$
\| \tilde{u}^{n+1} - u^{n} \|_{H^1(\D)} \rightarrow 0
$$
and consequently we have with $\tilde{u}^{n+1} = (1-\tau_n)u^n + \tau_n \gamma^n J_{\sigma}(u^{n})^{-1} \I u^n$ and the boundedness of $\tau_n$ that
$$
u^n = \gamma^n J_{\sigma}(u^{n})^{-1} \I u^n - \tau_n^{-1}(\tilde{u}^{n+1} - u^{n}) \ \rightarrow\ \gamma^{\ast} J_{\sigma}(u^{\ast} )^{-1} \I u^{\ast}
$$
strongly in $H^1(\D)$. Since we already know that $u^n$ converges weakly in $H^1(\D)$ to $u^{\ast}$, we can now conclude that this is even a strong convergence and we have
$$
J_{\sigma}(u^{\ast} ) u^{\ast} = \gamma^{\ast} \I u^{\ast}.
$$
This shows that $u^{\ast}$ is an eigenfunction with eigenvalue $\gamma^{\ast}$. The strong $H^1$-convergence also implies convergence of the energies, i.e., $E^{\ast} =\lim_{n\rightarrow \infty} E(u^n) = E(u^{\ast})$.
\end{proof}

For all sufficiently small $\tau_n$, Theorem~\ref{theorem-energy-reduction} proves the energy reduction and Theorem~\ref{theorem-global-convergence} global convergence. However, since we do not know a priori what a sufficiently small value for~$\tau_n$ is, we can combine the damped $J$-method with a line search algorithm that optimizes~$\tau_n$ in each iteration step such that the energy reduction is (quasi) optimal. Theorems~\ref{theorem-energy-reduction} and~\ref{theorem-global-convergence} show that such an optimal $\tau_n$ exists and that it does not degenerate to zero. We stress that finding such a $\tau_n$ does not require any additional inversions, which makes the procedure very cheap, cf.~Appendix~\ref{app:Tau} for details.
\begin{conclusion}[$J$-method with optimal damping]
Consider a shift $\sigma$ such that the assumptions of Theorem \ref{theorem-energy-reduction} are fulfilled. Given $u^n\in V$ with $\|u^n\|=1$ the next iteration is obtained by selecting the optimal damping parameter with
$$
\tau_{n} := \underset{0<\tau \le 2}{\mbox{\rm arg min}} \hspace{5pt} E\left( \frac{(1- \tau)u^n + \tau\, \gamma_n\, J_\sigma(u^n)^{-1} \I u^n}{\| (1-\tau)u^n + \tau\, \gamma_n\, J_\sigma(u^n)^{-1} \I u^n \|} \right)
$$
and defining $u^{n+1}$ as in~\eqref{eqn:JmethodDamped}. 
The approximations are energy diminishing and converge (up to a subsequence) strongly in $V$ to an $L^2$-normalized eigenfunction of the GPEVP. 
\end{conclusion}
Finally, if there is no rotation, we can even achieve guaranteed global convergence to the ground state provided that the selected initial value is non-negative.
\begin{proposition}[global convergence to ground state]
Assume the setting of Theorem~\ref{theorem-global-convergence}.  Furthermore, we consider that there is no rotation, i.e., $\Omega=0$, and a non-negative starting value $u^0 \in V$, i.e., $u^0 \ge 0$. 
If $\tau_n \le 1$ and if the shift parameter~$\sigma>0$ is selected sufficiently large, then the iterates $u^n$ of the damped $J$-method converge strongly in $H^1(\D)$ to the unique (positive) ground state $u^{\ast} \ge 0$.
\end{proposition}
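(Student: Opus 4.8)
The plan is to build on Theorem~\ref{theorem-global-convergence}, which already delivers---up to a subsequence---strong $H^1(\D)$ convergence of $u^n$ to an $L^2$-normalized eigenfunction $u^*$. The three additional ingredients needed for the present statement are: (i) that the non-negativity of $u^0$ is propagated to all iterates, (ii) that a non-negative eigenfunction must be the positive ground state, and (iii) that this pins down the limit well enough to upgrade the subsequential convergence to full-sequence convergence. Throughout, I would exploit that for $\Omega=0$ one has $\nablaR=\nabla$ and $\WR=W\ge0$, so that $J_\sigma(u^n)$ has real coefficients and, for $\sigma>0$, a strictly positive zeroth-order part $\WR+\sigma\ge\sigma>0$.

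The heart of the argument is the positivity-preservation step, which I would prove by induction: assuming $u^n\ge 0$, I claim $\psi^n:=J_\sigma(u^n)^{-1}\I u^n\ge0$. First, since all data are real and the real and imaginary parts decouple in the defining variational problem when $\Omega=0$, the solution $\psi^n$ may be taken real-valued. Writing $\psi^n=(\psi^n)^+-(\psi^n)^-$ and testing $\langle J_\sigma(u^n)\psi^n,v\rangle=(u^n,v)_H$ with $v=-(\psi^n)^-$, the gradient term contributes $\int_\D|\nabla(\psi^n)^-|^2\dx$, the zeroth-order term contributes $\int_\D(\WR+\sigma)\,|(\psi^n)^-|^2\dx$, and the local cubic term contributes $\tfrac{3\kappa}{\|u^n\|^2}\int_\D |u^n|^2\,|(\psi^n)^-|^2\dx$, all non-negative. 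The only term of uncertain sign is the nonlocal rank-one term from \eqref{def-J-full}; its sign, however, is governed by the scalar $(u^n,\psi^n)_H=\langle J_\sigma(u^n)\psi^n,\psi^n\rangle\ge \tfrac12\|\psi^n\|_V^2>0$ (coercivity from Lemma~\ref{lem:Jcoercive}), which renders it non-negative as well. Since the right-hand side $-(u^n,(\psi^n)^-)_H\le0$, every contribution must vanish; in particular $\WR+\sigma>0$ forces $(\psi^n)^-=0$. Hence $\psi^n\ge0$, and because $\gamma_n=1/(u^n,\psi^n)_H>0$ and $\tau_n\le1$, the convex combination $\tilde u^{n+1}=(1-\tau_n)u^n+\tau_n\gamma_n\psi^n$ in \eqref{eqn:JmethodDamped} and its normalization $u^{n+1}$ remain non-negative.

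Granting positivity of the iterates, the subsequential limit $u^*$ from Theorem~\ref{theorem-global-convergence} inherits $u^*\ge0$, and the strong maximum principle applied to $-\Delta u^*+Wu^*+\kappa|u^*|^2u^*=\lambda^* u^*$ upgrades this to $u^*>0$ in $\D$. Invoking the uniqueness of the positive ground state for the non-rotating GPEVP (cf.~\cite{CCM10,HenP18ppt}), I would conclude that $u^*$ is precisely the ground state and that $E^*=E(u^*)$ equals the ground-state energy.

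Finally, for convergence of the full sequence I would argue by a standard subsequence-of-subsequence principle: every subsequence of $(u^n)$ admits a further subsequence converging strongly in $H^1(\D)$ to a non-negative eigenfunction, which by the previous step is the unique ground state $u^*$; since the limit is always the same, the entire sequence converges to $u^*$. Equivalently, one may invoke the last assertion of Theorem~\ref{theorem-global-convergence}, as $u^*$ is the only eigenfunction on the energy level $E^*$. The main obstacle in this program is the positivity-preservation step, and specifically the nonlocal rank-one contribution in $J$: unlike the gradient, potential, and local cubic terms it has no obvious sign, and the argument hinges on observing that its sign is tied to $(u^n,\psi^n)_H$, which the coercivity of $J_\sigma(u^n)$ forces to be strictly positive.
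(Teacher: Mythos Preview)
Your proof is correct and takes a genuinely different route from the paper for the key positivity-preservation step.

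The paper splits $J_\sigma(u^n)=S_\sigma+G$ into a self-adjoint elliptic part $S_\sigma$ and a rank-one remainder $G$, applies the Sherman--Morrison formula to write $J_\sigma(u^n)^{-1}$ in terms of $S_\sigma^{-1}$, and then uses the positivity-preservation of $S_\sigma^{-1}$. This requires the denominator $1-(u^n,S_\sigma^{-1}\I f)_{L^2}$ with $f=2\kappa|u^n|^2u^n$ to be positive, which in turn forces the additional largeness condition $\sigma>2\kappa\|u^n\|_{L^6}^3$. Your argument is a direct maximum-principle computation: test the defining equation for $\psi^n$ with $-(\psi^n)^-$ and check signs term by term. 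The crucial observation---that the nonlocal rank-one contribution has the good sign because $(u^n,\psi^n)_H=\langle J_\sigma(u^n)\psi^n,\psi^n\rangle>0$ by coercivity---neatly sidesteps the Sherman--Morrison bookkeeping. As a bonus, your approach is sharper: it needs only the coercivity of $J_\sigma(u^n)$ already built into the setting of Theorem~\ref{theorem-energy-reduction} (via $\sigma\ge\tfrac43E(u^0)$), not the extra largeness the paper's route demands. The remaining steps (non-negative limit is the unique positive ground state by \cite{CCM10,HenP18ppt}, then full-sequence convergence via the last clause of Theorem~\ref{theorem-global-convergence} or a subsequence-of-subsequence argument) match the paper's.
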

\begin{proof}
If $\Omega=0$, then the problem can be fully formulated over $\R$ and admits a unique positive ground state $u^{\ast} \in V$, cf.~\cite{CCM10}. The only other ground state is $- u^{\ast}$. Furthermore, all excited states (i.e., all other eigenfunctions) must necessarily change their sign on $\D$, cf.~\cite[Lem.~5.4]{HenP18ppt}. Hence, if we can verify that the iterates of the damped $J$-method~\eqref{eqn:JmethodDamped} preserve positivity, then Theorem \ref{theorem-global-convergence} guarantees that the global $H^1$-limit must be the desired ground state $u^{\ast}$. 

Recall the damped $J$-iteration from~\eqref{eqn:JmethodDamped}, which (over $\R$) reduces to
\begin{align*}
\langle J_{\sigma}(u^n) v , w  \rangle
= \langle S_{\sigma} v , w \rangle + \langle G v , w \rangle,
\end{align*}
where $S_{\sigma}$ is the linear self-adjoint operator given by
$$
\langle S_{\sigma} v , w \rangle := ( \nabla v , \nabla w )_{L^2(\mathcal{D})} + (  \hspace{2pt}(W+\sigma + 3 \kappa  |u^n|^2 ) \hspace{2pt}v, w )_{L^2(\mathcal{D})}
$$
and $G$ characterizes the non-symmetric rank-$1$ remainder, i.e.,
$$
\langle G v , w \rangle := - (u^n , v )_{L^2(\mathcal{D})} \hspace{2pt} (f ,  w )_{L^2(\mathcal{D})},\qquad
\mbox{where }  f := 2 \kappa\, |u^n|^2 u^n .
$$
Analogously to the Sherman--Morrison formula for matrices~\cite{SheM50}, we can see that 
\[
  (S_{\sigma} + G)^{-1}
  =
  S_{\sigma}^{-1} -\frac{S_{\sigma}^{-1} \circ G \circ S_{\sigma}^{-1}  }{1 -  (u^n , S_{\sigma}^{-1} \I f )_{L^2(\mathcal{D})} }.
\]
Consequently we can write the effect of the inverse as
$$
J_{\sigma}(u^n)^{-1} \I v =  (S_{\sigma} + G)^{-1} \I v %, w \rangle 
= %\langle 
S_{\sigma}^{-1} (\I v) %, w \rangle 
- \frac{S_{\sigma}^{-1} \circ G \circ S_{\sigma}^{-1} (\I v) }{1 -  (u^n , S_{\sigma}^{-1} \I f )_{L^2(\mathcal{D})}   }.
$$
Since $S_{\sigma}$ is a self-adjoint elliptic operator, it preserves positivity, i.e., we have $S_{\sigma}^{-1} \I v \ge 0$ if $v\ge 0$. This immediately follows by writing the action of $S_{\sigma}^{-1}$ as an energy minimization problem. Starting (inductively) from a function $u^n\ge0$ we conclude that
$$
S_{\sigma}^{-1} (\I u^n) \ge0 \qquad \mbox{and} \qquad
- S_{\sigma}^{-1} \circ G \circ S_{\sigma}^{-1} (\I u^n) \ge 0.
$$
If we can ensure that $1 -  (u^n , S_{\sigma}^{-1} \I  f )_{L^2(\mathcal{D})} >0$, then we have $J_{\sigma}(u^n)^{-1} \I u^n \ge 0$. Let us hence consider $(u^n , S_{\sigma}^{-1} \I  f )_{L^2(\mathcal{D})}$, for which we obtain
$$
| (u^n , S_{\sigma}^{-1} \I  f )_{L^2(\mathcal{D})} | \le \|u^n\| \, \|S_\sigma^{-1}\|_{\mathcal{L}(V^*,V)} \| \I f\|_{V^*}
 \le 2 \kappa\, \|u^n\|^3_{L^6(\D)} \, \|S_\sigma^{-1}\|_{\mathcal{L}(V^*,V)}.
$$
Since $S_{\sigma}$ is self-adjoint, we have
$$
\|S_\sigma^{-1}\|_{\mathcal{L}(V^*,V)} = 1/\lambda_\text{min}(S_\sigma) = 1/(\sigma + \lambda_\text{min}(S_0))\le 1/\sigma,
$$
where $ \lambda_\text{min}(S_0)>0$ is the minimal eigenvalue of $S_0$. Consequently, if the shift is such that
\[
 2 \kappa\, \|u^n\|^3_{L^6(\D)} < \sigma,
\]
then we have positivity of $J_{\sigma}(u^n)^{-1} \I u^n$. 
Note that by the energy reduction property, we can bound $\|u^n\|^3_{L^6(\D)}$ uniformly for all $n$ by a constant that only depends on the initial energy $E(u^0)$. Together with the obvious positivity of $(1-\tau_n)u^n$ for $\tau_n\le 1$, we conclude the existence of a sufficiently large shift $\sigma$ so that $u^{n+1}\ge 0$ for all $n\ge 0$ and hence global convergence to the ground state.
\end{proof}
%
%
%=============================================================================
%=========  Numerics
%============================================================================= 
\section{Numerical experiments}\label{sect:numerics}
This section concerns the numerical performance of the proposed $J$-method enhanced by shifting and/or damping as outlined in Sections~\ref{ss:J-shifted} and~\ref{ss:J-damped}. As a general model, we seek critical points of the Gross-Pitaevskii energy~\eqref{eq:energy} in a bounded domain $\D=(-L,L)^2$ for some parameter $L>0$. The particular choice of $L$ as well as the other physical parameters $\Omega$, $W$, and $\kappa$ will be specified separately in the various model problems below. 

For the spatial discretization we always use bilinear finite elements on a Cartesian mesh of width $h = 2^{-8}L$. We will not investigate discretization errors with respect to the underlying PDE. For approximation properties of discrete eigenfunctions we refer to the analytical results presented in \cite{CCH18,CCM10,CGZ10,HMP14b} and to \cite{CDM14,HSW19,XiX16b} for a posteriori estimators and adaptivity.
Our focus is the performance of the iterative eigenvalue solver promoted in this paper. As a measure of accuracy we will use the~$L^2(\D)$-norm of the residual~$\A(u^n)u^n-\lambda^n \I u^n$ given an approximate finite element eigenpair $(\lambda^n,u^n)$. We will stop the solver whenever the residual falls below the tolerance $\mbox{\rm\small TOL}=10^{-8}$.  

For a better assessment of the performance of the $J$-method, we compare it with the projected $a_z$-Sobolev gradient flow introduced in \cite{HenP18ppt}: given~$u^0 \in V$ with $\| u^0 \|=1$, define for $n = 1, 2, \ldots$, 
\begin{align}
\label{GFaz-eqn}\nonumber
\hat{u}^{n+1} &:= \mathcal{A}(u^n)^{-1}\mathcal{I} u^n,\qquad 
\gamma_n^{-1}:=\langle A(u^n,\hat{u}^{n+1}) , \hat{u}^{n+1} \rangle,
\\
u^{n+1} &:= \frac{(1 - \tau_n) u^n + \tau_n \gamma_n\hat{u}^{n+1}  }{\| (1 - \tau_n) u^n + \tau_n \gamma_n\hat{u}^{n+1} \|}.
\end{align}
We will refer to this approach as the $\A$-method (without shift). Note that every step of the~$\A$-method can be interpreted as an energy minimization problem such that the iteration is positivity preserving for the case without rotation. Thus guarantees global convergence to the ground state for every nonnegative starting value.
The corresponding shifted version, which we will also consider in the experiments, considers~$\mathcal{A}_\sigma(u^n) := \mathcal{A}(u^n) + \sigma \I$ in place of~$\mathcal{A}(u^n)$ in the definition of~$\hat{u}^{n+1}$ and a corresponding adjustment of the normalization factor~$\gamma_n$.
According to the numerical experiments of \cite{HenP18ppt}, this method is representative for the larger class of gradient flows in terms of accuracy-cost ratios. The cost per iteration step for both $\A$- and $J$-method are proportional and of the same order. Tentatively, the $\A$-method is cheaper by a fixed factor, since (due to the rank-$1$ matrix that appears in the $J$-version) an additional linear system per step has to be solved when the Sherman-Morrison formula is used, cf.~Appendix~\ref{app:Jmatrix}. To what extent the computational overhead of the $J$-method can be reduced by suitable preconditioned iterative solvers is beyond the scope of the paper. Notwithstanding the above, the experiments below clearly show that the $J$-method easily compensates its possible computational overhead per step by a notedly smaller iteration count.
%
%=============================================================================
\subsection{Ground state in a harmonic potential}\label{ss:numexp:harm}
In the first model problem, we consider a harmonic trapping potential with trapping frequencies $1/2$, i.e., 
\begin{equation}\label{eq:harm}
W(x)=\tfrac{1}{2}\, |x|^2.
\end{equation}
The angular momentum $\Omega$ is set to zero and the repulsion parameter to~$\kappa = 1000$. The size of the domain is chosen as $L=8$. This is larger than the Thomas-Fermi radius of the problem which can be estimated as $R^{\mbox{\scriptsize TF}}=\sqrt{2} \hspace{2pt}(\kappa/\pi)^{1/4}\approx5.97$, cf.~\cite{Bao14}. 
We are interested in computing the ground state, i.e., the global minimizer $u^\text{gs}$ of the energy \eqref{eq:energy}. Note that, up to sign, $u^\text{gs}$ is the unique eigenfunction that corresponds to the smallest eigenvalue $\lambda^\text{gs}$ which is well-separated from the remaining spectrum. As an initial value for all variants of eigenvalue solvers we use the bi-quadratic bubble
\begin{equation}
\label{eq:bubble}
u^0(x) = (1-x_1^2/L^2)(1-x_2^2/L^2),
\end{equation} 
interpolated in the finite element space and normalized in $L^2(\D)$. For this simple model problem, there are certainly more sophisticated initial guesses such as the Thomas-Fermi approximation. Our uneducated initial guess marks an additional challenge. As ground state energy we computed $E^\text{gs}:= E(u^\text{gs})\leq 6.019$. The corresponding eigenvalue approximation is $\lambda^\text{gs}\approx 17.93$. Clearly, the accuracy of these numbers is limited by the choice of the discretization parameter $h$. Mesh adaptivity as used in \cite{HSW19} or a higher-order method would certainly help to improve on these numbers.
\begin{figure}
% This file was created by matlab2tikz.
%
%The latest updates can be retrieved from
%  http://www.mathworks.com/matlabcentral/fileexchange/22022-matlab2tikz-matlab2tikz
%where you can also make suggestions and rate matlab2tikz.
%
\begin{tikzpicture}

\begin{axis}[%
width=4.4in,
height=2.1in,
at={(0.766in,1.146in)},
scale only axis,
xmin=0,
xmax=72,
xlabel=number of iterations,
xlabel style={below=-1.0mm},
ymode=log,
ymin=8e-10,
ymax=0.3,
yminorticks=true,
ylabel=$L^2$-norm of residual,
axis background/.style={fill=white},
xmajorgrids,
ymajorgrids,
yminorgrids,
legend style={legend cell align=left, align=left, draw=white!15!black, at={(0.98,0.90)}}
]
\addplot [color=darkBlue, line width=1.0pt, mark size=2.2pt, mark=square]
  table[row sep=crcr]{%
0	0.0158242956362844\\
1	0.00395644302988422\\
2	0.0011066667560253\\
3	0.000506839253224359\\
4	7.14071926563511e-06\\
5	5.56636724477214e-09\\
};
\addlegendentry{\small $J$-method}

\addplot [color=darkBlue, line width=1.0pt, mark size=2.2pt, mark=o]
  table[row sep=crcr]{%
0	0.0158242956362844\\
1	0.00395644302988422\\
2	0.00110666675602529\\
3	0.000506839253224356\\
4	0.000226752239879788\\
5	0.000115308897836691\\
6	6.50364651832702e-05\\
7	4.82025335432062e-05\\
8	2.42293648243155e-05\\
9	1.94180026216613e-05\\
10	9.18042668359323e-06\\
11	6.73799837931492e-06\\
12	3.2968691393139e-06\\
13	2.46492512906492e-06\\
14	1.22684550327289e-06\\
15	9.29532825622112e-07\\
16	4.67252116409565e-07\\
17	3.57816433520486e-07\\
18	1.55231285507349e-07\\
19	1.35403195264243e-07\\
20	5.89103504803052e-08\\
21	3.99167350490143e-08\\
22	2.05601803054216e-08\\
23	3.02298461391391e-08\\
24	5.46498107660896e-09\\
};
\addlegendentry{\small $J$-method ($\sigma = 0$)}

\addplot [color=darkBlue, line width=1.0pt,  mark size=1.2pt, mark=o]
  table[row sep=crcr]{%
0	0.0158242956362844\\
1	0.00852606469621754\\
2	0.00502103442564316\\
3	0.00317846147381111\\
4	0.00212094323880413\\
5	0.00146879972382851\\
6	0.00104396067920508\\
7	0.000755724994326532\\
8	4.37484510132343e-05\\
9	2.14554674522027e-07\\
10	6.07409409975354e-12\\
};
\addlegendentry{\small $J$-method ($\tau=1$)}

\addplot [color=darkRed, line width=1.0pt, mark size=2.2pt, mark=square]
  table[row sep=crcr]{%
0	0.0158242956362844\\
1	0.00749648848705238\\
2	0.00318688454902922\\
3	0.00242246170307509\\
4	0.00142941821512992\\
5	0.00114236251681556\\
6	0.000730253778012451\\
7	0.00706973780823292\\
8	0.015260057902494\\
9	0.405076630324779\\
10	0.0262520365239189\\
11	0.177272045219618\\
12	0.0398023149494973\\
13	0.0430609147672156\\
14	0.0305839586148964\\
15	0.115435407956597\\
16	0.0734019937634027\\
17	0.130495921251911\\
18	0.0875225028105712\\
19	0.109802428186084\\
20	0.0715144762593197\\
21	0.149378253312969\\
22	0.159409327423683\\
23	0.145382908613309\\
24	0.0554202651537804\\
25	0.113025780660821\\
26	0.0773853723793677\\
27	0.109217852553343\\
28	0.0793769787863513\\
29	0.157006886042607\\
30	0.0849841852822045\\
31	0.144770162535126\\
32	0.261383886642154\\
33	0.118366841668053\\
34	0.105517290535922\\
35	0.208304133712053\\
36	0.123722146104929\\
37	0.465891311860044\\
38	0.101019544675143\\
39	0.0903887543468415\\
40	0.133758375979905\\
41	0.22731626567503\\
42	0.148431112714033\\
43	0.156461054288301\\
44	0.149015573611169\\
45	0.103902894356913\\
46	0.393432656598703\\
47	0.157847235604441\\
48	0.225809930379258\\
49	0.162613105884789\\
50	0.123254815472761\\
51	0.230648531976592\\
52	0.203332928442542\\
53	0.320377353127517\\
54	0.142812095303523\\
55	0.138515472023808\\
56	0.160512177198367\\
57	0.189523702134398\\
58	0.126968445694019\\
59	0.149717030943557\\
60	0.142062847710212\\
61	0.236140290738525\\
62	0.0817618903618472\\
63	0.41977765192904\\
64	0.121190182334488\\
65	0.13922941061073\\
66	0.144678475061117\\
67	0.129157777282912\\
68	0.130211832449369\\
69	0.107464677925585\\
%70	0.127179121685508\\
%71	0.114624639534664\\
%72	0.0970034275018978\\
%73	0.12612806900396\\
%74	0.187126756271841\\
%75	0.127973640750163\\
%76	0.119932490801004\\
%77	0.46755550652513\\
%78	0.129520809935814\\
%79	0.132191054366981\\
%80	0.137010168163849\\
%81	0.30913929278006\\
%82	0.214870200293155\\
%83	0.121550917518652\\
%84	0.157487142663508\\
%85	0.167837133488655\\
%86	0.150282184391866\\
%87	0.170513500297067\\
%88	0.120125960191258\\
%89	0.113719326592236\\
%90	0.308425672985681\\
%91	0.145097906873215\\
%92	0.836877422457185\\
%93	0.116056378044933\\
%94	0.2524900326425\\
%95	0.104384332462347\\
%96	0.179929266430623\\
%97	0.0856680646126826\\
%98	0.44245136001608\\
%99	0.072588044351715\\
%100	0.234496600885684\\
};
\addlegendentry{\small $\A$-method}

\addplot [color=darkRed, line width=1.0pt, mark size=2.2pt, mark=o]
  table[row sep=crcr]{%
0	0.0158242956362844\\
1	0.00749648848705238\\
2	0.00318688454902922\\
3	0.00242246170307509\\
4	0.00142941821512992\\
5	0.00114236251681556\\
6	0.000730253778012451\\
7	0.000588488772504272\\
8	0.000393715261008141\\
9	0.000313692270524883\\
10	0.000217507691014142\\
11	0.000172848687281282\\
12	0.000120675722631313\\
13	9.68286395123887e-05\\
14	6.77652814023575e-05\\
15	5.39853241486395e-05\\
16	3.83017753802565e-05\\
17	3.02845831840946e-05\\
18	2.17497987770317e-05\\
19	1.70643199866747e-05\\
20	1.23937184154063e-05\\
21	9.64592867962108e-06\\
22	7.01083710154535e-06\\
23	5.45517763665352e-06\\
24	4.00659801069053e-06\\
25	3.09154187848141e-06\\
26	2.27130545773379e-06\\
27	1.75239266430947e-06\\
28	1.28780277091755e-06\\
29	9.93472098645358e-07\\
30	7.37436775195689e-07\\
31	5.63944533741817e-07\\
32	4.18657546335854e-07\\
33	3.20156950458221e-07\\
34	2.37704501303785e-07\\
35	1.81774374771853e-07\\
36	1.34976887919872e-07\\
37	1.04194070242668e-07\\
38	7.58183009111934e-08\\
39	5.95363782895982e-08\\
40	4.28604432655446e-08\\
41	3.39357260606475e-08\\
42	2.41716415598201e-08\\
43	1.91124134212883e-08\\
44	1.39049490329403e-08\\
45	1.09246704601158e-08\\
46	7.35990718372007e-09\\
};
\addlegendentry{\small $\A$-method ($\sigma=0$)}

\addplot [color=darkRed, line width=1.0pt, mark size=1.2pt, mark=o]
  table[row sep=crcr]{%
0	0.0158242956362844\\
1	0.0072710995511535\\
2	0.00427190339627901\\
3	0.00279045594806374\\
4	0.00188979734066732\\
5	0.00137578073356561\\
6	0.000996066726065456\\
7	0.000764759467044929\\
8	0.000575263308396514\\
9	0.000453972810419112\\
10	0.000349969242883695\\
11	0.000280171937313812\\
12	0.000219645450327274\\
13	0.000177195233050558\\
14	0.000140576254277885\\
15	0.000113899587434053\\
16	9.11345496086099e-05\\
17	7.40339687686627e-05\\
18	5.96004224882412e-05\\
19	4.84991608967811e-05\\
20	3.92149516486922e-05\\
21	3.19476049508608e-05\\
22	2.5912151296414e-05\\
23	2.11271984512569e-05\\
24	1.71734150388456e-05\\
25	1.40102295612608e-05\\
26	1.14057805676487e-05\\
27	9.30876865819919e-06\\
28	7.58639613485127e-06\\
29	6.19341579693238e-06\\
30	5.0512110546214e-06\\
31	4.12459160543725e-06\\
32	3.36565379651307e-06\\
33	2.74864821628245e-06\\
34	2.24368665104724e-06\\
35	1.83255602826812e-06\\
36	1.49626078804455e-06\\
37	1.22217718201323e-06\\
38	9.98063788473235e-07\\
39	8.15281128042773e-07\\
40	6.65859936118479e-07\\
41	5.43935489266143e-07\\
42	4.44281838077633e-07\\
43	3.62939056893541e-07\\
44	2.964624494059e-07\\
45	2.42187721168833e-07\\
46	1.97836047109733e-07\\
47	1.61619141789764e-07\\
48	1.32025622743053e-07\\
49	1.07857143175844e-07\\
50	8.81095248692123e-08\\
51	7.19806467697792e-08\\
52	5.88024880159081e-08\\
53	4.80385639152605e-08\\
54	3.92440876183212e-08\\
55	3.20604384104963e-08\\
56	2.61912796419896e-08\\
57	2.13969805422958e-08\\
58	1.74800203626619e-08\\
59	1.42803229276547e-08\\
60	1.16661900639833e-08\\
61	9.53071118384058e-09\\
};
\addlegendentry{\small $\A$-method ($\sigma=0, \tau=1$)}

\end{axis}
\end{tikzpicture}%
\caption{Computation of the ground state for a harmonic potential, cf.~Section~\ref{ss:numexp:harm} for details. The figure shows the $L^2(\D)$-norms of the residuals (logarithmic scale) vs.~the iteration count for several methods indicated in the legend.}
\label{fig:convHarmPot}
\end{figure}
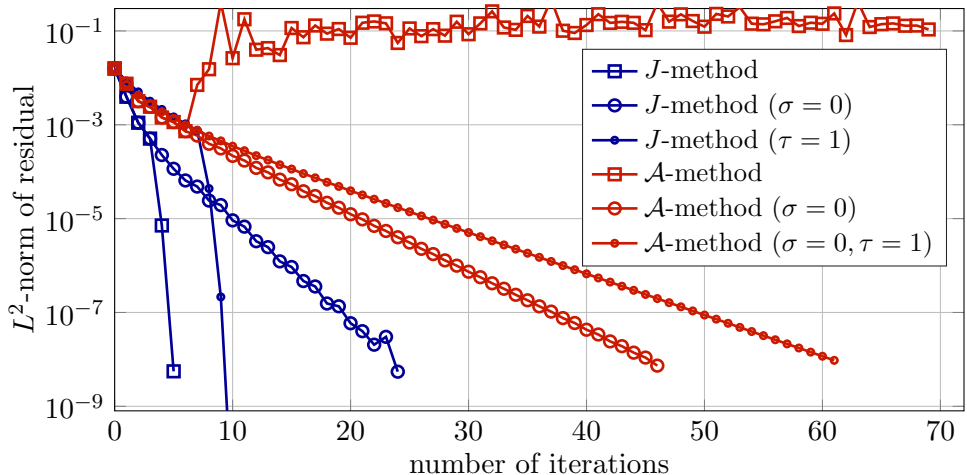

Figure~\ref{fig:convHarmPot} shows the evolution of the residuals during the iteration of several variants of the $J$ and $\A$-method. Unless specified differently, the general $J$- and $\A$-methods refer to a combination of damping and shifting. More precisely, we use damping for globalization of convergence until the residual falls below $10^{-3}$. Then we freeze the time step $\tau=1$ and switch to a Rayleigh shift strategy to possibly accelerate convergence, i.e., in each step we choose the shift $\sigma = -\lambda^n$ to be the current eigenvalue approximation. According to our numerical experience the coexistence of damping and shifting is hard to control. The transition from damping to shifting is clearly seen in the convergence plot of Figure~\ref{fig:convHarmPot}. We observe linear but fairly slow convergence in the damped phase. As soon as we switch to shifting, the convergence is beyond linear. The $\A$-method performs similarly in the damping phase but diverges as soon as the shift is turned on. It is this phenomenon already observed in \cite{JarKM14} in a less extreme characteristic (see also the second model problem below) that motivated the derivation of a shift-sensitive $J$-method.  
Note that the improved rate can be explained by its connection to Newton's method, cf.~[JKM14, Sect.~3.4]. Convergence proofs with higher rate, however, are only given in the discrete setting for linear and particular nonlinear eigenvalue problems~\cite{Osb64,MehV04}. %local third-order convergence of the eigenvalue

In Figure~\ref{fig:convHarmPot} we also show results for the variants of the $J$- and $\A$-methods where either $\tau$, $\sigma$, or both are fixed. Their performance is in between the aforementioned combined approaches. 
%
%=============================================================================
\subsection{Exponentially localized ground state in a disorder potential}\label{ss:numexp:disorder}
In the second model problem the non-negative external potential $W$ reflects a high degree of disorder and the repulsion parameter~$\kappa$ is small. In this situation, the low-energy eigenstates essentially localize in the sense of an exponential decay of their moduli. 

The numerical approximation of localized Schr\"odinger eigenstates in the linear case, i.e., for $\kappa=0$, has recently caused a large interest in the fields of computational physics and scientific computing~\cite{FilM12,ArnDJMF16,Ste17,ArnDFJM19,XieZO19}. In particular, the results of \cite{AltHP20} provide a mathematical justification of the observed localization. In the nonlinear case the phenomenon is still observable but locality deteriorates with increasing interaction~\cite{AltPV18,AltP19,AHP20ppt}. Here, we choose $\kappa=1$ which leads to a fairly localized ground state as it can be seen in Figure~\ref{fig:DisPot} (right). Its computation turns out to be much more challenging than in the case of a harmonic trapping potential in the sense that convergence rates are slower and iteration counts larger. 
This is related to a possible clustering of the lowermost eigenvalues. 
In particular, we expect $\lambda_1/\lambda_2 \approx 1$ in this example such that shifting can provide a considerable speed up. 
Due to the small repulsion parameter~$\kappa$, however, we expect a significant gap within the first few eigenvalues as in the linear case.
\begin{figure}
\frame{\includegraphics[width=.4\textwidth]{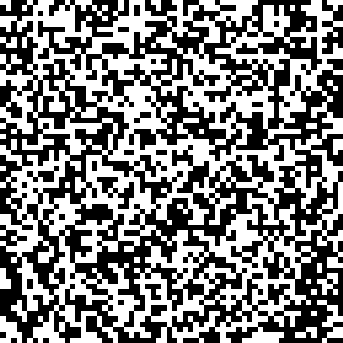}}\hspace{0.9em}
\frame{\includegraphics[width=.4\textwidth]{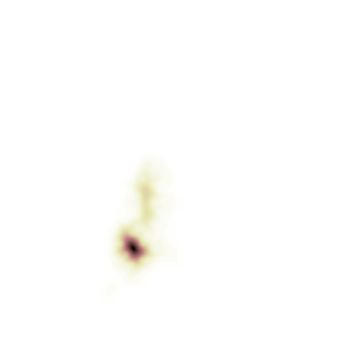}}
\caption{Exponentially localized ground state in a disorder potential, cf.~Section~\ref{ss:numexp:disorder}. Left: Disorder potential (random i.i.d.~checkerboard) taking values $0$ (white) and $(2\varepsilon L)^{-2}$ (black) for parameters $L=8$, $\varepsilon=2^{-6}$. Right: Corresponding ground state density for $\Omega = 0$, $\kappa = 1$.}
\label{fig:DisPot}
\end{figure}
\begin{figure}
\input{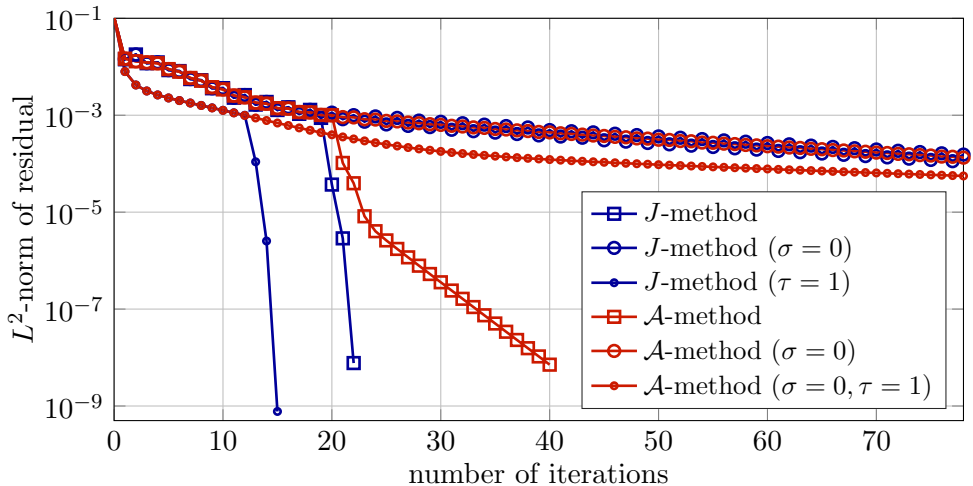}
\caption{Computation of the exponentially localized ground state in a disorder potential, cf.~Section~\ref{ss:numexp:disorder} for details. The figure shows the $L^2(\D)$-norms of the residuals (logarithmic scale) vs.~the iteration count for several variants of $J$- and $\A$-method.}
\label{fig:convDisPot}
\end{figure}

We have tested the same solvers as in the previous subsection. The $J$-method involving shifting performs best by far. Surprisingly, the variant without adaptive time step in the damping phase even performed better. This is no contradiction with the theory as we are showing residuals rather than energies. Moreover, a locally optimal energy decrease does not necessarily lead to better global performance. Still the difference is not too big. As a general recommendation from our numerical experience we would favor to use an adaptive time step because it was more robust. 
Another difference with regard to the harmonic potential is that, this time, the~$\A$-method reacts upon shifting in a positive way. For a few steps the convergence is indeed accelerated. However, thereafter the method turns back to a linear regime of convergence which cannot compete with the shifted $J$-method.  
Nevertheless, the $\A$-method does not fail completely as seen in the previous example. This may be caused by the small value of~$\kappa$, compared with the considered potential.
%
%=============================================================================
\subsection{Vortex lattices in a fast rotating trap}\label{ss:numexp:vortex}
We close the numerical illustration of the $J$-method with a qualitative study of vortex lattice states in the present of fast rotating potentials. We choose a harmonic potential~$W$ as in \eqref{eq:harm} and set $\Omega=0.99$, $\kappa= 1000$, and the size of the computational domain to $L=10$. 

We have computed four different eigenfunctions using the $J$-method. This was only possible using the shift-sensitivity of the $J$-method. We shall briefly describe the computational parameters. We use the bi-quadratic bubble \eqref{eq:bubble} as the initial value for all computations. To compute the (tentative) ground state $u_1$ (see Figure~\ref{fig:vortex}, upper left) we used the combined strategy as before. However, we switched from damping to shifting only once the residual falls below $10^{-6}$. Switching earlier led to states of higher energy. E.g., switching at a tolerance of $10^{-3}$ lead to the  eigenfunction $u_2$ depicted in the upper right of Figure~\ref{fig:vortex}. It is interesting to observe that while $E_1:=E(u_1)<E(u_2):=E_2$ the corresponding eigenvalues are ordered the other way around. 
\begin{figure}
\frame{\includegraphics[width=.38\textwidth]{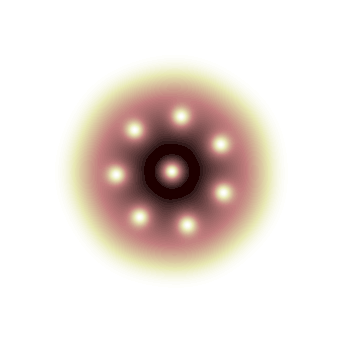}}\hspace{2ex}
\frame{\includegraphics[width=.38\textwidth]{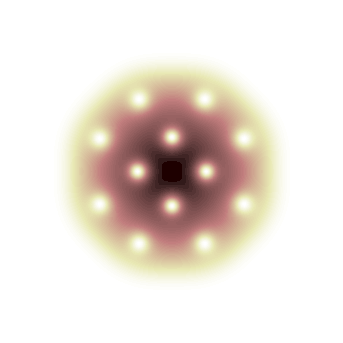}}\\[-4ex]
$\lambda_1 = 15.6094,\;E_1=5.3616$ \hspace{9ex} $\lambda_2=15.5470,\;E_2=5.3871$\\[3ex]
\frame{\includegraphics[width=.38\textwidth]{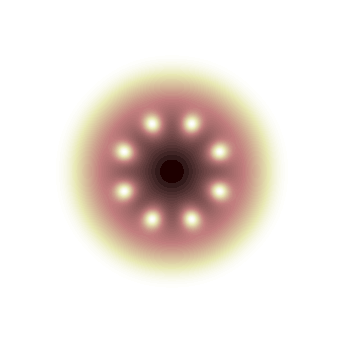}}\hspace{2ex}
\frame{\includegraphics[width=.38\textwidth]{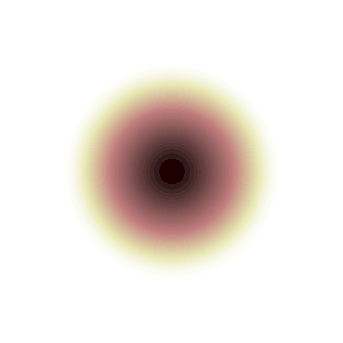}}\\[-4ex]
$\lambda_3=15.6434,\; E_3=5.3937$ \hspace{9ex} $\lambda_4= 17.9299,\;E_4=6.0188$ \\[2ex]
\caption{Computation of vortex lattices in a fast rotating trap at different energy levels, cf.~Section~\ref{ss:numexp:vortex}. The parameters are $L=10$, $\Omega = 0.85$, $\kappa = 1000$. The upper left figure depicts the density of the tentative ground state. The other three figures show densities corresponding to excited states. }
\label{fig:vortex}
\end{figure}

Two further excited states are found by limiting the adaptive shift to the interval $[15.0,15.6]$ for $u_3$ (see lower left of Figure~\ref{fig:vortex}) and to the interval $[15.2,15.45]$ for the state $u_4$ that does not show any vortices (see lower right of Figure~\ref{fig:vortex}). In both cases we used the lower end of the interval as the shift in the damping phase and we switched to adaptive shifting at residual tolerance $10^{-4}$ for $u_3$ and $10^{-3}$ for $u_4$. Note that $u_1$ seems to be the global energy minimizer of the (discretized) problem but the exited states $u_2,u_3,u_4$ do not necessarily represent the next higher energy levels $2$ to $4$ but some levels of higher energy.

From this rather complicated derivation one can see that it is by no means trivial to compute these excited states. We shall also say that it is not always easy to control the shifting. If one shifts too early in the sense that the approximation is not yet close to the target eigenfunction (e.g.~in terms of number of vortices) the procedure may fail completely. 
Despite this difficulty which is intrinsic to the nonlinear eigenvalue problem at hand, the $J$-method along with shifting and damping enables the selective approximation of excited states as well as the amplification of convergence beyond linear rates in the spirit of the Rayleigh quotient iteration even in this challenging regime of vortex pattern formation.  
%
%=============================================================================
%=========  Conclusion
%=============================================================================
\section{Conclusion}
In this paper we have generalized the $J$-method proposed in~\cite{JarKM14} to the abstract Hilbert space setting. This gives rise to a variational formulation that is straightforwardly accessible by Galerkin-type discretizations, e.g., based on finite elements. 
Moreover, we have transferred the proof of local convergence of the $J$-method from the discrete setting (cf.~\cite{JarKM14}) to the abstract setting and recovered a quantitative convergence rate that depends on the spectral shift. Since this fast convergence is indeed a local feature, we have proposed a damped $J$-method. For the GPEVP, the damping step can be seen as a discretization of a generalized gradient flow and guarantees reduction of the energy associated to the Gross-Pitaevskii operator. This energy reduction is the key to the global convergence of the damped method. 

We have proposed a combined strategy of damping and shifting, depending on the residual error. The damping part guides the iterates to a sufficiently small neighborhood of an eigenfunction. Therein, the shifting significantly improves the linear rate of convergence. With a Rayleigh-type shifting strategy remarkable speed-ups beyond linear convergence are observed. In numerical experiments we have demonstrated the excellent performance of the arising method and its suitability for both the computation of ground states and the selective computation of excited states. We believe that the proposed strategy can be also an efficient tool for treating other types of eigenvalue problems with nonlinearities, in particular those that can be rephrased as finding the critical points of constraint energy minimization problems. %
%
%=============================================================================
%=========  Bibliography
%=============================================================================
\newcommand{\etalchar}[1]{$^{#1}$}
\def\cprime{$'$}

%
%
%=============================================================================
%=========  Appendix
%=============================================================================
\begin{appendix}
% Finding (quasi) optimal step size
\section{Energy-diminishing step size control}\label{app:Tau}
We consider the damped $J$-method \eqref{eqn:JmethodDamped} in the case of the Gross-Pitaevskii equation. In order to implement an efficient step size control, we consider the function
$$f(\tau):=  E\left( \frac{ (1- \tau)u^n + \tau\, \gamma_n\, J_\sigma(u^n)^{-1} \I u^n } {\| (1-\tau)u^n + \tau\, \gamma_n\, J_\sigma(u^n)^{-1} \I u^n \| } \right)
$$
that we want to minimize for $\tau \in (0,2)$. Based on $u^n$, we compute 
$$
w^n := \gamma_n\, J_\sigma(u^n)^{-1} \I u^n.
$$
Note that this implies
\begin{align*}
\int_{\D} | (1 - \tau) u^n + \tau w^n |^4 \dx
= (1-&\tau)^4 \int_{\D} |u^n|^4 \dx
+ 4\, (1-\tau)^3 \tau \hspace{2pt} \int_{\D} \Re( u^n \overline{w^n}) \hspace{2pt} |u^n|^2 \dx \\
&+ (1-\tau)^2 \tau^2 \int_{\D} 2 |w^n|^2 |u^n|^2 
+ 4\, |\Re(u^n \overline{w^n} )|^2 \dx \\
&+ 4\, \tau^3 (1-\tau) \hspace{2pt} \int_{\D} \Re( u^n \overline{w^n})  \hspace{2pt} |w^n|^2 \dx  
+ \int_{\D} \tau^4 |w^n|^4 \dx.
\end{align*}
With this, we precompute various terms, which are given by
\begin{align*}
\alpha_0 &:= \int_{\D} |\nablaR u^n|^2 + \WR |u^n|^2\dx,\quad \alpha_1 := 2 \int_{\D} \Re\left( \nablaR u^n \cdot \overline{\nablaR w^n} + \WR \hspace{2pt} u^n \hspace{2pt} \overline{w^n} \right) \dx,\\
\alpha_2 &:= \int_{\D} |\nablaR w^n|^2 + \WR |w^n|^2\dx
\end{align*} 
as well as
\begin{align*}
\beta_0 &:= \frac{\kappa}{2} \int_{\D} |u^n|^4\dx, \quad \beta_1 := 2 \kappa \int_{\D} \Re( u^n \overline{w^n}) \hspace{2pt} |u^n|^2\dx , \quad \beta_2 := \kappa \int_{\D} |w^n|^2 |u^n|^2 + 2 |\Re(u^n \overline{w^n} )|^2 \dx,\\
\beta_3 &:= 2 \kappa \int_{\D}  \Re( u^n \overline{w^n})  \hspace{2pt} |w^n|^2 \dx, \quad \beta_4 :=  \frac{\kappa}{2} \int_{\D} |w^n |^4\dx
\end{align*}\and
\begin{align*}
\zeta_0 &:= \int_{\D} |u^n|^2\dx,  \qquad 
\zeta_1 := 2 \int_{\D} \Re( u^n \hspace{2pt} w^n) \dx,\qquad \zeta_2 := \int_{\D} |w^n|^2\dx.
\end{align*}
Note that the terms $\alpha_i$, $\beta_i$, and $\zeta_i$ have to be computed only once per time step (e.g.~within one loop over the grid elements). Finally, we define the function
$$
s^n(\tau ) 
:= \Biggl( \sum_{i,j\ge 0:\;i+j=2} (1-\tau)^i\, \tau_n^j\, \zeta_{j} \Biggr)^{-1/2}
$$
and observe that $f(\tau)$ is given by 
\begin{align*}
f(\tau) =
\frac{1}{2}\, \Biggl( \sum_{i,j\ge 0:\;i+j=2} |s^n(\tau)|^2 (1-\tau)^i\, \tau^j\, \alpha_{j}
+ \sum_{i,j\ge 0:\;i+j=4} |s^n(\tau)|^4 (1-\tau)^i\, \tau^j\, \beta_{j} \Biggr).
\end{align*}
After precomputing $\alpha_i$, $\beta_i$, and $\zeta_i$, the function $f(\tau)$ can be cheaply evaluated. The minimization step, i.e., $\tau_n := \mbox{arg min} \hspace{2pt} \{ f(\tau) | \hspace{3pt} \tau \in (0,2) \}$ can be easily implemented using, e.g., a golden section search. Note that the energy of $u^{n+1}$ is now given by $f(\tau_n)$.
	
% Implementation Issues
\section{Matrix representation of the $\mathcal{J}$-operator}\label{app:Jmatrix}
The $\mathcal{J}$-operator in case of the GPEVP was derived in Section~\ref{sect:localGPE:Joperator}  % \eqref{def-J-full}
and we consider the iteration given by \eqref{eqn:JmethodDamped-new}. For the implementation we need to discuss the handling of the nonlinear terms.  Using the identity~$(u \overline{v} + 2 v \overline{u}) u\, \overline{w} = 2 \Re\left( u \overline{v} \right)u\, \overline{w}  + |u|^2 v\, \overline{w}$, we end up with the integrals  
\begin{align*}
I_1 := \int_{\D} \Re (u \overline{v})\, u\, \overline{w} \dx, \qquad
I_2 := \int_{\D} |u|^2 v\, \overline{w} \dx, \qquad 
I_3 := \Hscapro{u}{v} \int_{\D} |u|^2 u\, \overline{w} \dx.
\end{align*}
Note that we only need to consider real test functions $w$ and decompose $u, v$ into its real and imaginary part, i.e., $u=u_R + \ci u_I$, $v=v_R + \ci v_I$. This is also done in the finite element discretization, i.e., we work with real vectors of double dimension. For the first integral we note that 
\begin{align*}
\int_{\D} \Re(u \overline{v})\, u\, \overline{w} \dx 
&= \int_{\D} \big(u_R v_R + u_I v_I \big)\cdot \big(u_R w + \ci u_I w \big) \dx  \\
&= \int_{\D} \big(v_R u_R^2 + v_I u_Ru_I + \ci v_Ru_Ru_I + \ci v_Iu_I^2 \big) w \dx.  
\end{align*}
Introducing $M_{v}$ as the mass matrix weighted by $v$, this leads to the matrix representation 
\[
\begin{bmatrix} M_{u_Ru_R} & M_{u_Ru_I} \\ 
M_{u_Ru_I} & M_{u_Iu_I} \end{bmatrix}
\]

For the second term we have~$|u|^2 v\, \overline{w} = |u|^2 \big(v_R w + \ci v_I w \big)$. Thus, the corresponding finite element matrix is block diagonal and reads $\text{diag}(M_{|u|^2}, M_{|u|^2})$. 
Finally, 
%due to $(u,v) = \Re (u,v)_{L^2(\D)}$, 
we have 
\begin{align*}
I_3
= \Hscapro{u}{v} \int_{\D} |u|^2 u\, \overline{w}\dx 
= \big[(u_R,v_R)_{L^2(\D)} + (u_I,v_I)_{L^2(\D)} \big] \int_{\D} |u|^2 \big(u_R w + \ci u_I w \big) \dx.
\end{align*}
A simple rearrangement shows that this corresponds to the rank-one matrix 
\[
\begin{bmatrix} M u_R u_R^T M_{|u|^2} & M u_R u_I^T M_{|u|^2}  \\ 
M u_I u_R^T M_{|u|^2} & M u_I u_I^T M_{|u|^2} \end{bmatrix}
= \begin{bmatrix} M u_R \\ M u_I \end{bmatrix}	
\begin{bmatrix} M_{|u|^2} u_R \\ M_{|u|^2} u_I \end{bmatrix}^T.
\]

In total, a finite element discretization of the $J$-method calls for a solution of a linear system which is decomposed of several sparse matrices and the latter rank-$1$ update. This can be easily inverted using the Sherman-Morrison formula, cf.~\cite{SheM50, Woo50}. 
\end{appendix}

\begin{thebibliography}{CDM{\etalchar{+}}14}

\bibitem[ADF{\etalchar{+}}19]{ArnDFJM19}
D.~N. Arnold, G.~David, M.~Filoche, D.~Jerison, and S.~Mayboroda.
\newblock Computing spectra without solving eigenvalue problems.
\newblock {\em SIAM J. Sci. Comput.}, 41(1):B69--B92, 2019.

\bibitem[ADJ{\etalchar{+}}16]{ArnDJMF16}
D.~N. Arnold, G.~David, D.~Jerison, S.~Mayboroda, and M.~Filoche.
\newblock Effective confining potential of quantum states in disordered media.
\newblock {\em Phys. Rev. Lett.}, 116:056602, 2016.

\bibitem[AF20]{AltF20}
R.~Altmann and M.~Froidevaux.
\newblock {PDE} eigenvalue iterations with applications in two-dimensional
  photonic crystals.
\newblock {\em ESAIM Math. Model. Numer. Anal. (M2AN)}, 54(5):1751--1776, 2020.

\bibitem[AHP20a]{AltHP20}
R.~Altmann, P.~Henning, and D.~Peterseim.
\newblock Quantitative {A}nderson localization of {S}chr\"odinger eigenstates
  under disorder potentials.
\newblock {\em Math. Models Methods Appl. Sci.}, 30(5):917--955, 2020.

\bibitem[AHP20b]{AHP20ppt}
Robert Altmann, Patrick Henning, and Daniel Peterseim.
\newblock Localization and delocalization of ground states of {B}ose-{E}instein
  condensates under disorder.
\newblock ArXiv e-print 2006.00773, 2020.

\bibitem[ALT17]{ALT17}
X.~Antoine, A.~Levitt, and Q.~Tang.
\newblock Efficient spectral computation of the stationary states of rotating
  {B}ose-{E}instein condensates by preconditioned nonlinear conjugate gradient
  methods.
\newblock {\em J. Comput. Phys.}, 343:92--109, 2017.

\bibitem[AMR88]{AMR88}
R.~Abraham, J.~E. Marsden, and T.~Ratiu.
\newblock {\em Manifolds, tensor analysis, and applications}, volume~75 of {\em
  Applied Mathematical Sciences}.
\newblock Springer-Verlag, New York, second edition, 1988.

\bibitem[AP19]{AltP19}
R.~Altmann and D.~Peterseim.
\newblock Localized computation of eigenstates of random {S}chr\"odinger
  operators.
\newblock {\em SIAM J. Sci. Comput.}, 41:B1211--B1227, 2019.

\bibitem[APV18]{AltPV18}
R.~Altmann, D.~Peterseim, and D.~Varga.
\newblock Localization studies for ground states of the {G}ross-{P}itaevskii
  equation.
\newblock {\em PAMM}, 18(1):e201800343, 2018.

\bibitem[Bao14]{Bao14}
W.~Bao.
\newblock Mathematical models and numerical methods for {B}ose-{E}instein
  condensation.
\newblock {\em Proceedings of the International Congress for Mathematicians
  2014}, 2014.

\bibitem[BCL06]{BCL06}
W.~Bao, I-L. Chern, and F.~Y. Lim.
\newblock Efficient and spectrally accurate numerical methods for computing
  ground and first excited states in {B}ose-{E}instein condensates.
\newblock {\em J. Comput. Phys.}, 219(2):836--854, 2006.

\bibitem[BCS17]{BCS17}
A.~Borz\`\i, G.~Ciaramella, and M.~Sprengel.
\newblock {\em Formulation and numerical solution of quantum control problems}.
\newblock Society for Industrial and Applied Mathematics (SIAM), Philadelphia,
  PA, 2017.

\bibitem[BD04]{BaD04}
W.~Bao and Q.~Du.
\newblock Computing the ground state solution of {B}ose-{E}instein condensates
  by a normalized gradient flow.
\newblock {\em SIAM J. Sci. Comput.}, 25(5):1674--1697, 2004.

\bibitem[Bos24]{Bos24}
S.~N. Bose.
\newblock Plancks {G}esetz und {L}ichtquantenhypothese.
\newblock {\em Zeitschrift f\"ur Physik}, 26(1):178--181, 1924.

\bibitem[BS08]{BaS08}
W.~Bao and J.~Shen.
\newblock A generalized-{L}aguerre-{H}ermite pseudospectral method for
  computing symmetric and central vortex states in {B}ose-{E}instein
  condensates.
\newblock {\em J. Comput. Phys.}, 227(23):9778--9793, 2008.

\bibitem[BT03]{BaT03}
W.~Bao and W.~Tang.
\newblock Ground-state solution of {B}ose-{E}instein condensate by directly
  minimizing the energy functional.
\newblock {\em J. Comput. Phys.}, 187(1):230--254, 2003.

\bibitem[BWM05]{Bao-et-al-2005}
W.~Bao, H.~Wang, and P.~A. Markowich.
\newblock Ground, symmetric and central vortex states in rotating
  {B}ose-{E}instein condensates.
\newblock {\em Commun. Math. Sci.}, 3(1):57--88, 2005.

\bibitem[Can00]{Can00}
E.~Canc{\`e}s.
\newblock S{CF} algorithms for {HF} electronic calculations.
\newblock In {\em Mathematical models and methods for ab initio quantum
  chemistry}, volume~74 of {\em Lecture Notes in Chem.}, pages 17--43.
  Springer, Berlin, 2000.

\bibitem[CCHM18]{CCH18}
E.~Canc\`es, R.~Chakir, L.~He, and Y.~Maday.
\newblock Two-grid methods for a class of nonlinear elliptic eigenvalue
  problems.
\newblock {\em IMA J. Numer. Anal.}, 38(2):605--645, 2018.

\bibitem[CCM10]{CCM10}
E.~Canc{\`e}s, R.~Chakir, and Y.~Maday.
\newblock Numerical analysis of nonlinear eigenvalue problems.
\newblock {\em J. Sci. Comput.}, 45(1-3):90--117, 2010.

\bibitem[CDM{\etalchar{+}}14]{CDM14}
E.~Canc\`es, G.~Dusson, Y.~Maday, B.~Stamm, and M.~Vohral\'{\i}k.
\newblock A perturbation-method-based {\it a posteriori} estimator for the
  planewave discretization of nonlinear {S}chr\"{o}dinger equations.
\newblock {\em C. R. Math. Acad. Sci. Paris}, 352(11):941--946, 2014.

\bibitem[CGZ10]{CGZ10}
H.~Chen, X.~Gong, and A.~Zhou.
\newblock Numerical approximations of a nonlinear eigenvalue problem and
  applications to a density functional model.
\newblock {\em Math. Methods Appl. Sci.}, 33(14):1723--1742, 2010.

\bibitem[CLB00a]{CaL00}
E.~Canc{\`e}s and C.~Le~Bris.
\newblock Can we outperform the {DIIS} approach for electronic structure
  calculations?
\newblock {\em Int. J. Quantum Chem.}, 79(2):82--90, 2000.

\bibitem[CLB00b]{CaL00B}
E.~Canc\`es and C.~Le~Bris.
\newblock On the convergence of {SCF} algorithms for the {H}artree-{F}ock
  equations.
\newblock {\em M2AN Math. Model. Numer. Anal.}, 34(4):749--774, 2000.

\bibitem[CORT09]{COR09}
M.~Caliari, A.~Ostermann, S.~Rainer, and M.~Thalhammer.
\newblock A minimisation approach for computing the ground state of
  {G}ross-{P}itaevskii systems.
\newblock {\em J. Comput. Phys.}, 228(2):349--360, 2009.

\bibitem[DC07]{DiC07}
C.~M. Dion and E.~Canc{\`e}s.
\newblock Ground state of the time-independent {G}ross-{P}itaevskii equation.
\newblock {\em Comput. Phys. Comm.}, 177(10):787--798, 2007.

\bibitem[DGPS99]{DGP99}
F.~Dalfovo, S.~Giorgini, L.~P. Pitaevskii, and S.~Stringari.
\newblock Theory of {B}ose-{E}instein condensation in trapped gases.
\newblock {\em Rev. Mod. Phys.}, 71(3):463--512, 1999.

\bibitem[DK10]{DaK10}
I.~Danaila and P.~Kazemi.
\newblock A new {S}obolev gradient method for direct minimization of the
  {G}ross-{P}itaevskii energy with rotation.
\newblock {\em SIAM J. Sci. Comput.}, 32(5):2447--2467, 2010.

\bibitem[DP17]{DaP17}
I.~Danaila and B.~Protas.
\newblock Computation of ground states of the {G}ross-{P}itaevskii functional
  via {R}iemannian optimization.
\newblock {\em SIAM J. Sci. Comput.}, 39(6):B1102--B1129, 2017.

\bibitem[Ein24]{Ein24}
A.~Einstein.
\newblock {\em Quantentheorie des einatomigen idealen {G}ases}, pages 261--267.
\newblock Sitzber. Kgl. Preuss. Akad. Wiss., 1924.

\bibitem[Fin99]{Fin96}
B.~Finta.
\newblock About {O}strowski's iterative method in infinite-dimensional spaces.
\newblock {\em Comput. Math. Appl.}, 38(9-10):193--196, 1999.

\bibitem[FM12]{FilM12}
M.~Filoche and S.~Mayboroda.
\newblock Universal mechanism for {A}nderson and weak localization.
\newblock {\em Proc. Natl. Acad. Sci. USA}, 109(37):14761--14766, 2012.

\bibitem[GRPG01]{GaP01}
J.~J. Garc{\'{\i}}a-Ripoll and V.~M. P{\'e}rez-Garc{\'{\i}}a.
\newblock Optimizing {S}chr\"odinger functionals using {S}obolev gradients:
  applications to quantum mechanics and nonlinear optics.
\newblock {\em SIAM J. Sci. Comput.}, 23(4):1316--1334, 2001.

\bibitem[HMP14]{HMP14b}
P.~Henning, A.~M{\aa}lqvist, and D.~Peterseim.
\newblock Two-level discretization techniques for ground state computations of
  {B}ose-{E}instein condensates.
\newblock {\em SIAM J. Numer. Anal.}, 52(4):1525--1550, 2014.

\bibitem[HP20]{HenP18ppt}
P.~Henning and D.~Peterseim.
\newblock Sobolev gradient flow for the {G}ross-{P}itaevskii eigenvalue
  problem: global convergence and computational efficiency.
\newblock {\em SIAM J. Numer. Anal.}, 58(3):1744--1772, 2020.

\bibitem[HR11]{HuR11}
M.~Huhtanen and S.~Ruotsalainen.
\newblock Real linear operator theory and its applications.
\newblock {\em Integr. Equat. Oper. Th.}, 69(1):113--132, 2011.

\bibitem[HSW19]{HSW19}
P.~Heid, B.~Stamm, and T.~P. Wihler.
\newblock Gradient flow finite element discretizations with energy-based
  adaptivity for the {G}ross-{P}itaevskii equation.
\newblock ArXiv e-print 1906.06954, 2019.

\bibitem[JKM14]{JarKM14}
E.~Jarlebring, S.~Kvaal, and W.~Michiels.
\newblock An inverse iteration method for eigenvalue problems with eigenvector
  nonlinearities.
\newblock {\em SIAM J. Sci. Comput.}, 36(4):A1978--A2001, 2014.

\bibitem[KE10]{KaE10}
P.~Kazemi and M.~Eckart.
\newblock Minimizing the {G}ross-{P}itaevskii energy functional with the
  {S}obolev gradient -- analytical and numerical results.
\newblock {\em Int. J. Comput. Methods}, 7(3):453--475, 2010.

\bibitem[Kit66]{Kit66}
J.~W. Kitchen.
\newblock Concerning the convergence of iterates to fixed points.
\newblock {\em Studia Math.}, 27:247--249, 1966.

\bibitem[MV04]{MehV04}
V.~Mehrmann and H.~Voss.
\newblock Nonlinear eigenvalue problems: a challenge for modern eigenvalue
  methods.
\newblock {\em GAMM-Mitteilungen}, 27(2):121--152, 2004.

\bibitem[OR70]{OrR70}
J.~M. Ortega and W.~C. Rheinboldt.
\newblock {\em Iterative solution of nonlinear equations in several variables}.
\newblock Academic Press, New York-London, 1970.

\bibitem[Osb64]{Osb64}
M.~R. Osborne.
\newblock {A new method for the solution of eigenvalue problems}.
\newblock {\em Comput. J.}, 7(3):228--232, 01 1964.

\bibitem[PS03]{PiS03}
L.~P. Pitaevskii and S.~Stringari.
\newblock {\em {B}ose-{E}instein Condensation}.
\newblock Oxford University Press, Oxford, 2003.

\bibitem[Roo51]{Roothaan51}
C.~C.~J. Roothaan.
\newblock New developments in molecular orbital theory.
\newblock {\em Rev. Mod. Phys.}, 23:69--89, 1951.

\bibitem[RSB14]{RSB14}
N.~Raza, S.~Sial, and A.~R. Butt.
\newblock Numerical approximation of time evolution related to
  {G}inzburg-{L}andau functionals using weighted {S}obolev gradients.
\newblock {\em Comput. Math. Appl.}, 67(1):210--216, 2014.

\bibitem[RSSL09]{RSS09}
N.~Raza, S.~Sial, S.~S. Siddiqi, and T.~Lookman.
\newblock Energy minimization related to the nonlinear {S}chr\"odinger
  equation.
\newblock {\em J. Comput. Phys.}, 228(7):2572--2577, 2009.

\bibitem[Shi81]{Shi81}
M.~H. Shih.
\newblock A further generalization of the {O}strowski theorem in {B}anach
  spaces.
\newblock {\em Proc. Japan Acad. Ser. A Math. Sci.}, 57(3):168--170, 1981.

\bibitem[SM50]{SheM50}
J.~Sherman and W.~J. Morrison.
\newblock Adjustment of an inverse matrix corresponding to a change in one
  element of a given matrix.
\newblock {\em Ann. Math. Statist.}, 21(1):124--127, 03 1950.

\bibitem[Ste17]{Ste17}
S.~Steinerberger.
\newblock Localization of quantum states and landscape functions.
\newblock {\em Proc. Amer. Math. Soc.}, 145(7):2895--2907, 2017.

\bibitem[UJR18]{UJR18}
P.~Upadhyaya, E.~Jarlebring, and E.~H. Rubensson.
\newblock A density matrix approach to the convergence of the self-consistent
  field iteration.
\newblock ArXiv e-print 1809.02183, 2018.

\bibitem[Woo50]{Woo50}
M.~A. Woodbury.
\newblock {\em Inverting modified matrices}.
\newblock Statistical Research Group, Memo. Rep. no. 42. Princeton University,
  Princeton, N. J., 1950.

\bibitem[XX16]{XiX16b}
H.~Xie and M.~Xie.
\newblock Computable error estimates for ground state solution of
  {B}ose-{E}instein condensates.
\newblock ArXiv e-print 1604.05228, 2016.

\bibitem[XZO19]{XieZO19}
H.~Xie, L.~Zhang, and H.~Owhadi.
\newblock Fast eigenpairs computation with operator adapted wavelets and
  hierarchical subspace correction.
\newblock {\em SIAM J. Numer. Anal.}, 57(6):2519--2550, 2019.

\bibitem[Zei90]{Zei90a}
E.~Zeidler.
\newblock {\em Nonlinear functional analysis and its applications. {II}/{A}}.
\newblock Springer-Verlag, New York, 1990.

\bibitem[Zha19]{Zhang2019}
Z.~Zhang.
\newblock Exponential convergence of {S}obolev gradient descent for a class of
  nonlinear eigenproblems.
\newblock ArXiv e-print 1912.02135, 2019.

\end{thebibliography}
\end{document}